\documentclass[11pt]{article}
\usepackage{amssymb,amsfonts,amsmath,amsthm,array,multirow,cite}
\usepackage{enumerate}
\usepackage{cool}
\usepackage{tikz}
\usetikzlibrary{calc,decorations.pathreplacing}

\allowdisplaybreaks[4]

\theoremstyle{plain}
\newtheorem{theorem}{Theorem}[section]
\newtheorem{lemma}[theorem]{Lemma}
\newtheorem{corollary}[theorem]{Corollary}

\theoremstyle{definition}

\newtheorem{example}[theorem]{Example}
\newtheorem{conjecture}[theorem]{Conjecture}

\theoremstyle{remark}
\newtheorem{remark}[theorem]{Remark}

\makeatletter \@addtoreset{equation}{section}
\begin{document}

\begin{center}
{\Large {\bf Euler's partition theorem and lecture hall partition theorem}}

\vskip 6mm

{\small Andrew Y.Z. Wang$^{1}$ and Lei Zhang$^2$\\[2mm]

School of Mathematical Sciences\\
University of Electronic Science and Technology of China\\
Chengdu 611731, P.R. China\\[3mm]

$^1$yzwang@uestc.edu.cn and $^2$zhlmath@outlook.com\\[2mm]}
\end{center}
\noindent {\bf Abstract.} Euler's partition theorem was an important step in the establishment of Integer Partitions as a field in its own right. It has attracted many great mathematicians' attention, including Sylvester, and has foreshadowed a number of further results. Another deep result in Partition Theory is the beautiful Lecture Hall Partition Theorem, which is a finite version of Euler's identity. In this work, we introduce the lecture hall length to partitions into distinct parts, and devise a bijection from partitions into odd parts to partitions into distinct parts, which unifies many results together. Various recent theorems regarding the minima excludant and block index become immediate consequences of this bijection. We also pose an interesting conjecture which may uncover a deep relation between Euler's partition theorem and lecture hall partition theorem.

\noindent \textbf{Keywords:} Euler's partition theorem, lecture hall partition, statistics

\noindent \textbf{AMS Classification:} 05A17; 11P84

\section{Introduction}

A partition $\lambda$ of $n$ is a finite weakly decreasing sequence of positive integers $\lambda=(\lambda_1,\lambda_2,\ldots,\lambda_l)$ such that $\lambda_1+\lambda_2+\cdots+\lambda_l=n$. We write $|\lambda|=n$ if $\lambda$ is a partition of $n$. The terms $\lambda_i$ are called the parts of $\lambda$, and the number of parts of $\lambda$ is called the length of $\lambda$, denoted $\ell(\lambda)$. Usually a partition $\lambda$ can also be represented in the following compact form \[\lambda=(\lambda_1^{f_1},\lambda_2^{f_2},\ldots,\lambda_k^{f_k}),\]
where $\lambda_1>\lambda_2>\cdots>\lambda_k$, $f_i$ is the frequency of $\lambda_i$ in $\lambda$, and the superscript $f_i$ may be omitted if it equals one. We use $g(\lambda)$ to denote the largest part of $\lambda$, i.e., $g(\lambda)=\lambda_1$.

There are many results in the theory of partitions that assert the equicardinality of two classes of partitions.
The quintessential example is Euler's celebrated partition identity \cite{Euler48}.
\begin{theorem}
The partitions of $n$ into odd parts are equinumerous with the partitions of $n$ into distinct parts.
\end{theorem}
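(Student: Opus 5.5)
We will prove the theorem in two ways: first by generating functions, which is the quickest route, and then by Glaisher's bijection, which is closer in spirit to the bijective arguments developed later in the paper.

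\emph{Generating functions.} Counting partitions into distinct parts by weight gives
\[
\sum_{n\ge 0} d(n)q^n=\prod_{k\ge 1}(1+q^k),
\]
since each $k$ is either used once or not at all. Counting partitions into odd parts gives
\[
\sum_{n\ge 0} o(n)q^n=\prod_{k\ge 1}\frac{1}{1-q^{2k-1}},
\]
since each odd part may occur with any multiplicity. For every $k$ we have $1+q^k=(1-q^{2k})/(1-q^k)$. Hence
\[
\prod_{k\ge1}(1+q^k)=\frac{\prod_{k\ge1}(1-q^{2k})}{\prod_{k\ge1}(1-q^{k})}.
\]
All factors $1-q^{2k}$ in the numerator cancel against the factors with even index in the denominator, leaving $\prod_{k\ge1}(1-q^{2k-1})^{-1}$. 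Comparing coefficients of $q^n$ gives $o(n)=d(n)$.

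The only delicate point is justifying these manipulations of infinite products. They hold in the ring of formal power series $\mathbb{Z}[[q]]$. The coefficient of $q^n$ in each product depends on only the finitely many factors with $k\le n$, so one can truncate the products at $k\le n$, perform the cancellation with finite products, and read off the coefficient of $q^n$.

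\emph{Bijection.} Let $\lambda$ be a partition into odd parts, written $\lambda=(\lambda_1^{f_1},\ldots,\lambda_k^{f_k})$ with each $\lambda_i$ odd. Expand each frequency in binary as $f_i=\sum_j 2^{a_{ij}}$ with distinct exponents $a_{ij}$. Replace the $f_i$ copies of $\lambda_i$ by the parts $2^{a_{ij}}\lambda_i$; this preserves the sum. The resulting parts are distinct: each positive integer factors uniquely as $2^a m$ with $m$ odd, which recovers both $\lambda_i$ and the exponent $a_{ij}$, and the exponents for a fixed $\lambda_i$ are distinct.

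For the inverse, take a partition into distinct parts. Write each part as $2^a m$ with $m$ odd and split it into $2^a$ copies of $m$. Uniqueness of binary expansion shows that the two maps are mutually inverse. This gives the required bijection, and we will use the same bijection when refining by additional statistics later.
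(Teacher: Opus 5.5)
Both of your arguments are correct, but neither is the paper's route. The paper treats Euler's theorem as classical and points to Sylvester's and Glaisher's bijections; your second argument is exactly Glaisher's. Its own proof comes from the bijection $\varphi$ of Section~\ref{secmap}. One iterates $\tau$, which lowers the weight of an odd partition by exactly $b_o(\lambda)$ per step, and records $\mathbf{s}=(b_o(\lambda),b_o(\tau(\lambda)),\ldots)$. This is a gap-free partition of $n$ (successive entries drop by $0$ or $1$, ending at $1$), and its conjugate is the distinct partition $\varphi(\lambda)$; the weight-adding operations $\stackrel{+w}{\longrightarrow}$ invert the construction.

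Your route buys brevity and full rigor. The only thing to add to your truncation argument is that cancelling the finite products leaves the factors $1-q^{2k}$ with $n/2<k\le n$ in the numerator, and these are $\equiv 1\pmod{q^{n+1}}$. Glaisher's map also already yields Theorem~\ref{thmGla}, since the odd parts of the image are exactly the $\lambda_i$ with $f_i$ odd. The paper's far more intricate $\varphi$ is built to carry all the statistics of Theorem~\ref{thmmain} at once:
length to alternating sum, block odd index to length, the mod-$4$ odd-multiplicity counts to odd/even-indexed odd parts, and largest part to lecture hall length. The cost is the analysis of Sections~\ref{secsta}--\ref{seclb}.

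For the same reason, your closing remark that you ``will use the same bijection when refining by additional statistics later'' would not serve for the paper's refinements beyond Theorem~\ref{thmGla}. Glaisher sends $(1^{10})$ to $(8,2)$, which has length $2$, alternating sum $6$ and lecture hall length $2$. A bijection realizing Theorem~\ref{thmmain} must instead send $(1^{10})$ to a partition of length $b_o(1^{10})=1$, alternating sum $10$ and lecture hall length $1$, namely $(10)=\varphi(1^{10})$. The paper itself remarks that none of the classical bijections extends to Theorem~\ref{thmmain}.
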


Euler's partition theorem can be considered as a gem of partition theory. The significance of this theorem lies not only in its simple statement, but also in its beautiful bijective proofs given by Sylvester \cite{SF82} and Glaisher \cite{Glai83}. A description of Sylvester's bijection, also called fish-hook bijection, can be found in \cite[pp. 89--90]{Andrews04}. Glaisher's bijection uses the merging/splitting technique, see \cite[pp. 8--9]{Andrews04} for details.

For convenience, a partition $\lambda$ is called an odd partition if each part is odd, and a distinct partition if all parts are distinct. In the sequel, we use $\mathcal{O}$ and $\mathcal{D}$ to denote the set of odd and distinct partitions, respectively.

As observed by Bessenrodt \cite{Bess91}, Sylvester's bijection implies a refinement of Euler's result. Let $\ell_a(\lambda)$ denote the alternating sum of $\lambda=(\lambda_1,\lambda_2,\ldots,\lambda_l)$, namely,
\[\ell_a(\lambda)=\lambda_1-\lambda_2+\lambda_{3}-\lambda_{4}+\cdots+(-1)^{l-1}\lambda_{l}.\]
\begin{theorem}\label{thmSylv}
The number of odd partitions of $n$ into $l$ parts equals the number of distinct partitions of $n$ whose
alternating sum is $l$. In terms of generating functions, we have
\begin{align*}
\sum\limits_{\lambda\in\mathcal{O}}x^{\ell(\lambda)}q^{|\lambda|}
=\sum\limits_{\pi\in\mathcal{D}}x^{\ell_a(\pi)}q^{|\pi|}.
\end{align*}
\end{theorem}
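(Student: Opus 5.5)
I would prove Theorem~\ref{thmSylv} through Sylvester's fish-hook bijection. The plan is to check that this bijection carries the number of parts of an odd partition to the alternating sum of the corresponding distinct partition. An alternative is a generating-function verification, which I describe at the end as a fallback.

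\textbf{Conjugation reformulation.} For a distinct partition $\pi=(\pi_1>\cdots>\pi_k)$, write $\pi'$ for its conjugate. Pairing consecutive parts shows that $\ell_a(\pi)=\sum_{j}(\pi_{2j-1}-\pi_{2j})$, with $\pi_{k+1}=0$. This equals the number of columns of the Ferrers diagram whose length is odd, that is, the number of odd parts of $\pi'$.

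\textbf{The bijection.} Start from $\lambda\in\mathcal{O}$ with $\ell(\lambda)=l$. Draw each odd part $2m+1$ as a centered row: a central dot with $m$ dots to the left and $m$ to the right. Stack these rows so that the central column has length $l$.

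Decompose the resulting symmetric diagram into nested hooks (the fish-hooks). The $i$-th hook consists of the $i$-th central-type column, read from the bottom up, together with the corresponding row arm going to the right. Alternately, hooks are formed using the left arm and the next column. Reading off the sizes of these hooks gives strictly decreasing numbers, and they form $\pi\in\mathcal{D}$.

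Invertibility of this construction is classical: see \cite[pp.~89--90]{Andrews04}. I would take it as given and focus only on the statistic.

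\textbf{Tracking the statistic.} I must show that $\ell_a(\pi)=l$. I would argue by induction on $l$ via a peeling step. Removing the outermost pair of hooks corresponds to removing the central column, which has length $l$, and reassembling the rest.

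Concretely, $\pi_1-\pi_2$ should equal the contribution of the outer pair of hooks. The first hook uses the central column plus the first right arm. The second hook uses the first left arm plus the next column. The outer-layer computation should give
\[
\pi_1-\pi_2 = l - l^{*},
\]
where $l^{*}$ is the number of parts of the inner odd partition $\lambda^{*}$, obtained by deleting the central column and shifting. By induction $\ell_a(\pi^{*})=l^{*}$, and telescoping gives $\ell_a(\pi)=l$.

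\textbf{Main obstacle.} The main obstacle is making this peeling identity precise. The arm lengths, and the parity of whether the last hook is a lone column, must be handled carefully, especially when some of the smallest parts are $1$ and so contribute only central dots. I would first check the identity on small examples, such as $(5,3,1)$ and $(3,3,1,1)$, to fix the exact hook convention before writing the induction.

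\textbf{Fallback.} If the bijective bookkeeping becomes unwieldy, I would switch to generating functions. The left side is $\prod_{j\ge0}(1-xq^{2j+1})^{-1}$. On the right, use the conjugation reformulation above. A distinct partition $\pi$ with $k$ parts is a staircase $(k,k-1,\ldots,1)$ plus an arbitrary partition into at most $k$ parts. Alternatively, track $\pi'$ directly: $\pi'$ ranges over partitions in which every part from $1$ to $\pi'_1$ appears. Count odd parts of $\pi'$ with weight $x$. Then compare the two sides using a $q$-binomial identity, or the $q$-Gauss/Cauchy identity.
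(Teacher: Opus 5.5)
Your plan works, and $\pi_1-\pi_2=l-l^{*}$ is the right target identity. As written, though, the one step with real content is asserted rather than checked, and your description of $\lambda^{*}$ is wrong. Deleting the central column alone leaves rows of even length $2m_i$, so it does not give an odd partition. What you must remove are the two outer hooks. Together these consist of the central column, the column immediately to its left, and the \emph{whole first row}.

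Concretely, write $\lambda=(2m_1+1,\ldots,2m_l+1)$ and $c=\#\{i: m_i\ge 1\}$. If $m_1=0$, then $\pi=(l)$. Otherwise:
\begin{itemize}
\item the first hook has $\pi_1=l+m_1$ dots;
\item the second hook has $\pi_2=c+(m_1-1)$ dots;
\item the remaining dots (rows $2,\ldots,c$, columns $\ge 1$ and $\le -2$) close up into the centered diagram of $\lambda^{*}=(2m_2-1,\ldots,2m_c-1)$ about column $1$, whose fish-hooks are exactly $\pi_3,\pi_4,\ldots$.
\end{itemize}
Thus $l^{*}=c-1$ and $\pi_1-\pi_2=l-c+1=l-l^{*}$. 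Then $\ell_a(\pi)=(\pi_1-\pi_2)+\ell_a(\pi_3,\pi_4,\ldots)$ closes the induction. The ``lone column'' case needs no separate treatment: $(3,1,1)\mapsto(4,1)$ with $\lambda^{*}=\emptyset$, and $(5,5)\mapsto(4,3,2,1)$ with $\lambda^{*}=(3)\mapsto(2,1)$.

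Your fallback can also be made concrete. Split the gap-free $\pi'$ by the parity of its largest part and combine the cases $2s$ and $2s+1$. This reduces the claim to the Durfee-rectangle identity $\sum_{s\ge0}z^{s}p^{s^{2}}/\bigl((p;p)_s(z;p)_{s+1}\bigr)=1/(z;p)_\infty$ at $p=q^{2}$, $z=xq$, where $(a;p)_n=\prod_{i=0}^{n-1}(1-ap^{i})$.

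With this repair your argument is correct, but it is not the paper's proof. The paper credits the statement to Bessenrodt's observation about Sylvester's bijection, which is essentially your route. It then reproves it with its own map $\varphi$. Under $\tau$ a part disappears only when the last block is $(1^a)$, which happens exactly when $b_o(\lambda)$ is odd. So $\ell(\lambda)$ equals the number of odd entries of $\mathbf{s}(\lambda)$. Since $\mathbf{s}(\lambda)=\pi'$, your own conjugation reformulation finishes the proof in one line. The observation you reserve for the fallback is the entire link in the paper.

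Your route buys a classical, self-contained bijection whose invertibility you can quote, with all the work in the hook bookkeeping. The paper's route makes this statistic trivial but first requires proving $\varphi$ invertible. In exchange, the same $\varphi$ also sends $b_o$ to $\ell$, $(o_{1,4},o_{3,4})$ to $(n_{o,o},n_{o,e})$, and $(g(\lambda)+1)/2$ to $\ell_h$. The fish-hook map does not do this: it sends $(5^2)$ to $(4,3,2,1)$, which has $4$ parts and lecture hall length $4$, whereas $\varphi((5^2))=(6,4)$.
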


Coincidentally, Glaisher's bijection also leads to a simple refinement of Euler's theorem. Let $o(\lambda)$ be the number of different odd parts of $\lambda$ occurring an odd number of times, and let $n_o(\lambda)$ be the number of odd parts in $\lambda$.
\begin{theorem}\label{thmGla}
The number of odd partitions of $n$ with exactly $m$ different parts occurring an odd number of times is equal to the number of distinct partitions of $n$ with $m$ odd parts. In the notation of generating functions, we have
\begin{align*}
\sum\limits_{\lambda\in\mathcal{O}}y^{o(\lambda)}q^{|\lambda|}
=\sum\limits_{\pi\in\mathcal{D}}y^{n_o(\pi)}q^{|\pi|}.
\end{align*}
\end{theorem}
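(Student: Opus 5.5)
We prove Theorem~\ref{thmGla} with Glaisher's merging/splitting bijection $\phi:\mathcal{O}\to\mathcal{D}$, and we check that it carries the statistic $o$ to $n_o$.

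\textbf{Construction of $\phi$.} Let $\lambda\in\mathcal{O}$ be written as $(\lambda_1^{f_1},\ldots,\lambda_k^{f_k})$ with $\lambda_1>\cdots>\lambda_k$ odd. For each $i$, expand $f_i$ in binary:
\[f_i=\sum_{j\in S_i}2^j,\]
where $S_i$ is a finite set of nonnegative integers. Replace the $f_i$ copies of $\lambda_i$ by the parts $2^j\lambda_i$, one for each $j\in S_i$. The result is $\phi(\lambda)$, and its size is $\sum_i f_i\lambda_i=|\lambda|$.

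\textbf{$\phi(\lambda)$ is a distinct partition.} Every positive integer factors uniquely as $2^j m$ with $m$ odd. Hence the part $2^j\lambda_i$ determines both $i$ and $j$, so no two parts coincide.

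\textbf{Inverse map.} Given $\pi\in\mathcal{D}$, write each part as $2^j m$ with $m$ odd and split it into $2^j$ copies of $m$. For a fixed odd $m$, the parts of $\pi$ with odd part $m$ have distinct exponents $j$. So the multiplicity of $m$ in the image is a sum of distinct powers of $2$, and by uniqueness of binary expansion we recover the set $S_i$. Thus the two maps are mutually inverse and $\phi$ is a size-preserving bijection.

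\textbf{Tracking the statistic.} The parts of $\phi(\lambda)$ that are odd are exactly those with $j=0$. For a given $i$, the exponent $0$ lies in $S_i$ if and only if $f_i$ is odd. Hence
\[n_o(\phi(\lambda))=\#\{i: f_i \text{ odd}\}.\]
Since every part of $\lambda$ is odd, the right-hand side is $o(\lambda)$. Summing $y^{o(\lambda)}q^{|\lambda|}$ over $\mathcal{O}$ and using that $\phi$ is a bijection preserving size gives the generating function identity.

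The only step that needs care is the injectivity argument, which is where distinctness depends on the uniqueness of the factorization $2^j\cdot\mathrm{odd}$. Everything else is direct bookkeeping.
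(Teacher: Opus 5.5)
Your proof is correct. It is exactly Glaisher's merging/splitting bijection, which is the argument the paper cites for this theorem; the paper also recovers the identity from its own bijection $\varphi$ by setting $y_1=y_2=y$ in Theorem~\ref{thmmain}, but that is a much heavier route aimed at a finer refinement.
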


We have more to say about these two results. Theorem \ref{thmSylv} tells us that the distribution of the length on odd partitions is the same as the distribution of the alternating sum on distinct partitions. It is natural to consider which statistic of odd partitions correspond to the length of distinct partitions. To answer this question, Li, Wang and Xu \cite{LWX23} introduced a new partition statistic, called the block odd index, defined in the next paragraph.

Given an odd partition $\lambda$, we group the parts of $\lambda$, working from the largest to the smallest, into blocks each of which contains at most two distinct odd integers. Assuming that $2t-1$ is the largest part. If $2t-3$ appears in $\lambda$, then all the parts of size $2t-1$ and $2t-3$ form a block containing two distinct integers; otherwise, all the parts of size $2t-1$ inhabit a block with only one distinct integer. Remove this block from $\lambda$ and iterate the same operation on the remaining parts. Continue this process until eventually every part of $\lambda$ lies in a unique block. The weight of a block is defined to be $1$ if it is a block with only $1$'s, and $2$ otherwise. We define the block odd index of $\lambda$, denoted $b_o(\lambda)$, to be the sum of the weight of its blocks. For instance, if
\[\lambda=(25^2,23^7,21^5,17^4,15,11,9^6,5^3,3,1^6),\]
then it has $6$ blocks, namely $(25^2,23^7),(21^5),(17^4,15),(11,9^6),(5^3,3)$ and $(1^6)$. Thus,
\[b_o(\lambda)=2+2+2+2+2+1=11.\]
Letting $\pi=(23^4,17^2,15^5,11^3,7,5^6,3^5,1^2)$, then the blocks of $\pi$ are $(23^4),(17^2,15^5)$, $(11^3)$, $(7,5^6)$ and $(3^5,1^2)$. So $b_o(\pi)=2+2+2+2+2=10$.

Li, Wang and Xu \cite{LWX23} obtained the following strong result.
\begin{theorem}\label{thmEulerstr}
The number of odd partitions of $n$ into $l$ parts with $m$ distinct parts occurring an odd number of times and block odd index $k$ equals the number of distinct partitions of $n$ into $k$ parts with $m$ odd parts and alternating sum $l$. Equivalently, we have
\begin{align*}
\sum_{\lambda\in\mathcal{O}}x^{\ell(\lambda)}y^{o(\lambda)}z^{b_o(\lambda)}q^{|\lambda|}
=\sum_{\pi\in\mathcal{D}}x^{\ell_a(\pi)}y^{n_{o}(\pi)}z^{\ell(\pi)}q^{|\pi|}.
\end{align*}
\end{theorem}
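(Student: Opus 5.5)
We will prove the generating-function identity with a direct weight-preserving bijection $\phi:\mathcal{O}\to\mathcal{D}$. It sends length to alternating sum, $o$ to $n_o$, and $b_o$ to length. The idea is to decompose both sides into "blocks" and match blocks with pairs of consecutive parts of the distinct partition.

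\textbf{Step 1: the distinct side.} Take $\pi=(\pi_1>\pi_2>\cdots>\pi_k)\in\mathcal{D}$. If $k$ is odd, append a part $\pi_{k+1}=0$. Then group the parts into consecutive pairs $(\pi_{2j-1},\pi_{2j})$. Each pair with $\pi_{2j}>0$ contributes $2$ to $\ell(\pi)$. A final pair $(\pi_k,0)$ contributes $1$, matching the weight-$1$ block consisting only of $1$'s. Put $d_j=\pi_{2j-1}-\pi_{2j}\ge 1$, so that $\ell_a(\pi)=\sum_j d_j$. The pairs satisfy $\pi_{2j}>\pi_{2j+1}$, and this strict interlacing will be encoded by the gaps between blocks on the odd side.

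\textbf{Step 2: the odd side.} Write $\lambda\in\mathcal{O}$ as blocks $B_1,B_2,\dots$ from largest to smallest. A block is either $((2t-1)^a,(2t-3)^b)$ with $a,b\ge1$, or $((2t-1)^a)$. We send each block to a pair $(p,q)$ with $p-q=a+b$ (respectively $a$), so that the lengths add up to $\ell_a$. We also require $p+q=|B|$ plus a correction, where the correction is shifted along a staircase according to how many blocks lie below. The natural choice is the following. For the block $((2t-1)^a,(2t-3)^b)$ take
\[ p+q=a(2t-1)+b(2t-3) \quad\text{and}\quad p-q=a+b, \]
so $p=at+b(t-1)$ and $q=a(t-1)+b(t-2)$. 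For the one-part block $((2t-1)^a)$ take $p=at$ and $q=a(t-1)$. When $t=1$ this gives $q=0$, which is exactly the weight-$1$ block of $1$'s.

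\textbf{Step 3: strict decrease and invertibility.} The pairs from Step 2 need not satisfy $q_j>p_{j+1}$, so the naive map must be modified. We will process blocks from the smallest upward, re-embedding each pair on top of the partition already built, in Sylvester/fish-hook style. This means transferring an amount $\delta$ from the larger pair to the smaller one. Such a transfer preserves the total $p+q$ and preserves each difference $p-q$, so $\ell_a$ and $|\cdot|$ stay fixed. Its job is to guarantee the strict interlacing, using the fact that consecutive blocks have largest parts differing by at least $4$ in the index $t$ (or by $2$ after a one-part block).

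For $o$ versus $n_o$: $p+q$ is odd exactly when $a+b$ is odd. In a two-part block, $p$ and $q$ have parities determined by $a$ and $b$, and the number of odd entries among $p,q$ equals $[a\text{ odd}]+[b\text{ odd}]$. We must check that the staircase shift preserves this parity count, which holds if the shifts are even. Designing the shift to be even is therefore a constraint we impose.

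The main obstacle is Step 3. We need a shift rule that simultaneously restores strict decrease, uses even adjustments so that parities are kept, and is invertible: from $\pi$ we must be able to recover $t$, $a$, $b$ and whether a block is one-part or two-part. As a fallback, we will instead verify the identity analytically. The right side factors through the pair decomposition, giving an expression of the form
\[ \sum_k z^{k}\,(\text{a }q\text{-product}). \]
The left side factors as $\prod_t 1/(1-xyq^{2t-1})$ with the even multiplicities separated. We would compare the two by conditioning on the number of blocks and applying a $q$-binomial/Durfee-type argument.
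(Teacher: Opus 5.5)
There is a genuine gap. The problem is not only that Step 3 is unfinished: the invariants you ask Step 3 to preserve are incompatible with the theorem.

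In your scheme the $j$-th block from the top becomes the pair $(\pi_{2j-1},\pi_{2j})$. Every transfer keeps $p_j-q_j$ fixed, and the weight-$1$ block $(1^a)$ is matched to the final pair $(\pi_k,0)$. So your map forces $\pi_{2j-1}-\pi_{2j}$ to equal the number of parts in the $j$-th block. In particular it forces $\pi_k=a$ whenever $b_o(\lambda)$ is odd.

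Now take $\lambda=(5,1^{10})$. Its blocks are $(5)$ and $(1^{10})$, so $|\lambda|=15$, $\ell(\lambda)=11$, $o(\lambda)=1$ and $b_o(\lambda)=3$. Your naive pairs are $(3,2),(10,0)$.

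Any $\pi\in\mathcal{D}$ with $|\pi|=15$, $\ell(\pi)=3$ and $\ell_a(\pi)=11$ has $2\pi_2=|\pi|-\ell_a(\pi)=4$. Hence $\pi_2=2$, $\pi_3=1$ and $\pi_1=12$. The unique candidate is therefore $(12,2,1)$, and it does have $n_o=1$. But its pair differences are $10,1$ rather than $1,10$, and its last part is $1$, not $10$.

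Reversing the order does not rescue the idea either. Then the weight-$1$ block $(1^{10})$ would have to sit on the pair $(12,2)$, which contributes $2$ to $\ell(\pi)$ rather than $1$. So no shift rule of the kind you describe can exist, whether even or not.

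The fallback does not close the gap. Even at $z=1$ the left side is $\prod_{t\ge1}(1+xyq^{2t-1})/(1-x^2q^{4t-2})$, not $\prod_t 1/(1-xyq^{2t-1})$. Once $z^{b_o}$ is inserted it does not factor at all, because $b_o$ depends on which consecutive odd integers occur. The ``$q$-binomial/Durfee-type argument'' you defer to is precisely the content of the theorem, and it is left unspecified.

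The missing idea, as in the paper, is to stop matching individual blocks with individual parts of $\pi$ and instead shrink all blocks simultaneously by one unit each.
\begin{itemize}
\item The map $\tau$ turns one copy of the top element $t$ of each weight-$2$ block into $t-2$, and deletes one $1$ from the weight-$1$ block. This lowers $|\lambda|$ by exactly $b_o(\lambda)$.
\item Iterating $\tau$ until $\lambda$ is emptied records a gap-free sequence $\mathbf{s}=(b_o(\lambda),b_o(\tau(\lambda)),\ldots)$. The image $\varphi(\lambda)$ is the conjugate of $\mathbf{s}$.
\item Then $|\pi|=|\lambda|$ and $\ell(\pi)=b_o(\lambda)$ are immediate.
\item Also $\ell(\lambda)=\ell_a(\pi)$: a step deletes a part exactly when $b_o$ is odd, and $\ell_a(\pi)$ counts the odd entries of $\mathbf{s}$.
\item Invertibility comes from an explicit weight-adding operation that regroups blocks from the bottom according to the parity of the weight.
\item $o=n_o$, refined to $o_{1,4}=n_{o,o}$ and $o_{3,4}=n_{o,e}$, is proved by induction along prefixes of $\mathbf{s}$. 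The induction also tracks parts of even multiplicity (including zero), and compares the top-down and bottom-up block groupings chain by chain.
\end{itemize}
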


Given a partition $\lambda$, let $o_{1,4}(\lambda)$ and $o_{3,4}(\lambda)$ denote the number of distinct parts which occur an odd number of times, and are congruent to $1$ and $3$ modulo $4$, respectively. Obviously, $o(\lambda)=o_{1,4}(\lambda)+o_{3,4}(\lambda)$. For a partition $\pi=(\pi_1,\pi_2,\ldots,\pi_l)$, we call each $\pi_{2i-1}$ an odd indexed part, and each $\pi_{2i}$ an even indexed part. Namely, whether $\pi_i$ is an odd or even indexed part depends on the parity of the subscript $i$.
Let $n_{o,o}(\pi)$ and $n_{o,e}(\pi)$ denote the number of odd and even indexed odd parts of $\pi$, respectively. It is clear that $n_o(\pi)=n_{o,o}(\pi)+n_{o,e}(\pi)$.

Wang and Zhang \cite{WZh23} strengthened Theorem \ref{thmEulerstr} further.
\begin{theorem}\label{thmfr}
The number of odd partitions of $n$ into $l$ parts with $i$ distinct parts congruent to $1$ modulo $4$ and each occurring an odd number of times, $j$ distinct parts congruent to $3$ modulo $4$ and each occurring an odd number of times, and block odd index $k$ is equal to the number of distinct partitions of $n$ into $k$ parts with alternating sum $l$, $i$ odd indexed odd parts and $j$ even indexed odd parts. In the language of generating functions, we have
\begin{align*}
\sum_{\lambda\in\mathcal{O}}x^{\ell(\lambda)}y_1^{o_{1,4}(\lambda)}y_2^{{o_{3,4}(\lambda)}}z^{b_o(\lambda)}q^{|\lambda|}
=\sum_{\pi\in\mathcal{D}}x^{\ell_a(\pi)}y_1^{n_{o,o}(\pi)}y_2^{n_{o,e}(\pi)}z^{\ell(\pi)}q^{|\pi|}.
\end{align*}
\end{theorem}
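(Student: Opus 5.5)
The plan is to prove the identity bijectively. I will build a map $\Phi:\mathcal{O}\to\mathcal{D}$ that works block by block and refines the correspondence behind Theorem \ref{thmEulerstr}. The block structure already matches most of the statistics. The block decomposition of $\lambda\in\mathcal{O}$ (largest to smallest) should correspond to grouping the parts of $\pi\in\mathcal{D}$ into consecutive pairs $(\pi_1,\pi_2),(\pi_3,\pi_4),\ldots$ read from the top. A weight-$2$ block should give a pair of two parts, and a final block of $1$'s should give a lone last part $\pi_k$ with $k$ odd. This makes $b_o(\lambda)=\ell(\pi)$ automatic. I will also require that each block's number of parts equals the difference $\pi_{2i-1}-\pi_{2i}$ of the corresponding pair (or $\pi_k$ for the lone part). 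Then $\ell(\lambda)=\ell_a(\pi)$ follows by summing over blocks.

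Concretely, I will process $\lambda$ from its smallest block upward, building $\pi$ from the bottom pair up.
\begin{itemize}
\item A block $(1^c)$ becomes the last part $c$.
\item A block $\bigl((2t-1)^u,(2t-3)^v\bigr)$, with $u\ge 1$ and $v\ge 0$, becomes a pair $(a,b)$ with $a-b=u+v$.
\item The size $a+b=(2t-1)u+(2t-3)v$ fixes $b$ once the difference is known, namely $2b=(2t-2)(u+v)-2v+\ldots$ after normalising. The precise formula is to be chosen so that $b$ exceeds the top part of the pair already built below.
\end{itemize}
Weights add, so $|\lambda|=|\pi|$. The gap condition between blocks is that consecutive blocks are separated because $2t-3$ was absent or the block ended. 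This condition must translate exactly into strict decrease between $\pi_{2i}$ and $\pi_{2i+1}$, which is what makes $\Phi$ land in $\mathcal{D}$ and be invertible. The cleanest way to arrange this is to subtract from each block a staircase contribution determined by the blocks below it, in the spirit of Sylvester's fish-hook peeling. I will verify the resulting inverse by reading the pairs of $\pi$ from the bottom and recovering $t,u,v$.

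Next comes the parity refinement, which is the new content beyond Theorem \ref{thmEulerstr}. Within a pair, $a+b\equiv u+v \pmod 2$ and $a-b=u+v$. A part of odd multiplicity in the block therefore shows up as an odd entry in the pair. A block with $u,v$ both odd gives $a,b$ both even (an even difference with an even sum), which must be checked against the fact that it contributes two odd-multiplicity parts. So I will instead track $o$ through the quantities $u \bmod 2$ and $v \bmod 2$ and make the construction send them to the parities of $a$ and $b$ separately. Since $a=\pi_{2i-1}$ is odd-indexed and $b=\pi_{2i}$ is even-indexed, what is needed is this: $2t-1$ (of multiplicity $u$) and $2t-3$ (of multiplicity $v$) have opposite residues mod $4$, and the construction must be chosen so that the part $\equiv1\pmod 4$ controls the parity of the odd-indexed entry. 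The part $\equiv 3\pmod 4$ then controls the even-indexed entry. Which of the two block parts is $\equiv 1 \pmod 4$ depends on $t \bmod 2$. I therefore expect the formula for $(a,b)$ to involve a swap of the roles of $u$ and $v$ according to $t \bmod 2$, and the staircase shifts from lower blocks must preserve these parities, i.e.\ be even.

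The main obstacle is making the explicit formula simultaneously satisfy three requirements:
\begin{itemize}
\item strict decrease across pairs, which is needed for bijectivity;
\item the evenness of the shifts;
\item the mod-$4$ parity assignment.
\end{itemize}
My first attempt is to write each block as $u+v$ copies of $2t-3$ plus $u$ copies of $2$, and to convert it to $a=(t-1)(u+v)+u+\epsilon$, $b=(t-1)(u+v)-(u+v)+u+\ldots$, with a correction $\epsilon$ absorbing lower blocks. I will then adjust by checking small cases against Theorem \ref{thmfr}. If this fails, I will fall back to a generating-function proof. That proof factors both sides as a transfer product over blocks and pairs and compares the two sides by induction on the largest part.
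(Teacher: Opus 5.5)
\textbf{The proposal has a genuine gap: its block-to-pair dictionary cannot exist.} The plan sends the $i$-th block from the top to $(\pi_{2i-1},\pi_{2i})$, requires the block size to equal $\pi_{2i-1}-\pi_{2i}$, and sends a final block $(1^c)$ to a last part $\pi_k=c$. These requirements contradict the theorem itself, so no choice of formula or staircase correction can rescue them.

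Take $n=7$. The only odd partition of $7$ with $b_o=3$ is $(5,1^2)$, with blocks $(5)$ and $(1^2)$. The only distinct partition of $7$ with $3$ parts is $(4,2,1)$. So every bijection proving the statement must send $(5,1^2)$ to $(4,2,1)$. Yet the block $(5)$ has one part while $4-2=2$. Also, $(1^2)$ would have to become a last part $2$, and no $3$-part distinct partition of $7$ has smallest part $2$. Likewise, at $n=10$ your rules force $(7,1^3)$ to $(4,3,3)$, which is not distinct.

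The equalities $\ell(\lambda)=\ell_a(\pi)$ and $|\lambda|=|\pi|$ hold only globally: parts and weight get redistributed between blocks. So the inter-block separation condition on $\lambda$ can never be matched pair by pair with $\pi_{2i}>\pi_{2i+1}$. Note that the local part of your dictionary is actually fine. With $a=tu+(t-1)v$ and $b=a-u-v$, the mod-$4$ parity assignment already holds. Your claim that $u,v$ odd forces $a,b$ even is a slip: equal parities of $a\pm b$ only give $a\equiv b \pmod 2$, and here both are odd. The fallback ``transfer product'' argument is too unspecified to close the gap.

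\textbf{The missing idea is a non-local construction.} The paper's $\varphi$ applies $\tau$ repeatedly. In each weight-$2$ block it turns one copy of the top element $t$ into $t-2$, and in a block $(1^a)$ it deletes one $1$. It records the gap-free sequence $\mathbf{s}=(b_o(\lambda),b_o(\tau(\lambda)),\ldots)$, whose sum is $|\lambda|$, and takes $\pi$ to be the conjugate of $\mathbf{s}$. The statistics then follow as below.
\begin{itemize}
\item $b_o(\lambda)=\ell(\pi)$ is immediate.
\item $\ell(\lambda)=\ell_a(\pi)$ holds because $\tau$ shortens the partition exactly when $b_o$ is odd, while $\ell_a(\pi)=(\pi_1-\pi_2)+(\pi_3-\pi_4)+\cdots$ counts the odd entries of $\mathbf{s}$.
\item The mod-$4$ statistics are proved by induction along prefixes of $\mathbf{s}$, building $\lambda$ by the inverse weight-adding operation and $\pi$ column by column.
\end{itemize}
That induction needs auxiliary statistics your plan lacks. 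It uses $e_{1,4},e_{3,4}$ (block elements of even, possibly zero, multiplicity), matched with $n_{e,o},n_{e,e}$ (even parts at odd and even indices). These are needed because each step flips multiplicity parities on the $\lambda$ side and part parities on the $\pi$ side. It also needs a chain-by-chain comparison of the top-down and bottom-up block groupings. These agree except at the chain containing the $1$'s when $s_k=b_o(\Lambda_{k-1})+1$.
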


There are also several other refinements and variations of Euler's partition theorem. One beautiful result among these is the Lecture Hall Partition Theorem \cite{BE97}. A lecture hall partition of length $N$ is a partition $(\lambda_1,\lambda_2,\ldots,\lambda_N)$ allowing the zero part to appear and satisfying
\[\frac{\lambda_1}{N}\geq\frac{\lambda_2}{N-1}\geq\cdots\geq\frac{\lambda_N}{1}\geq0,\]
which is called the lecture hall condition. Bousquet-M\'{e}lou and Eriksson \cite{BE97} obtained a surprising identity for lecture hall partitions.
\begin{theorem}\label{themLH}
The number of lecture hall partitions of $n$ with length $N$ equals the number of odd partitions of $n$ with largest part at most $2N-1$.
\end{theorem}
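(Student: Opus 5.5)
Our strategy for Theorem \ref{themLH} is the classical one of Andrews: prove the generating function identity
\[
\sum_{\lambda\ \text{lecture hall of length } N} q^{|\lambda|}=\prod_{i=1}^{N}\frac{1}{1-q^{2i-1}}.
\]
We proceed by induction on $N$, keeping track of the size of the last (or first) part.

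The key device is the refined generating function $L_N(x,q)=\sum x^{\lambda_1}q^{|\lambda|}$ over lecture hall partitions of length $N$. Here we mark the \emph{largest} part $\lambda_1$ subject to $\lambda_1/N\ge\lambda_2/(N-1)\ge\cdots$. Deleting $\lambda_1$ leaves a lecture hall partition of length $N-1$. The constraint $\lambda_1\ge \lceil N\lambda_2/(N-1)\rceil$ depends on $\lambda_2 \bmod (N-1)$, so the recursion must be written in terms of $\lambda_2=(N-1)a+r$ with $0\le r<N-1$. This leads to functions refined by residue classes, and the recurrence closes on a finite family of such functions.

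Alternatively, and this is the route we will actually follow, we use the Bousquet-M\'elou--Eriksson ``shift'' argument. We write each lecture hall partition through the quotients $\lambda_i/(N-i+1)$. Equivalently, we substitute $\lambda_i=(N+1-i)m_i+r_i$ and encode the data as an anti-lecture-hall or $s$-sequence. We then show the generating function satisfies
\[
L_N(q)=\frac{1}{1-q^{2N-1}}\,L_{N-1}(q),
\]
using the $q$-Chu--Vandermonde / $q$-binomial evaluation
\[
\sum_k \begin{bmatrix}N\\k\end{bmatrix}_q \ldots
\]
that arises when summing over the last-part residue.

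Concretely, the steps are:
\begin{enumerate}
\item Establish a bijection between lecture hall partitions of length $N$ whose last part is $0$ and those of length $N-1$. Dropping a trailing $0$ preserves the lecture hall condition, and the converse also holds.
\item Show that the remaining ones, with $\lambda_N\ge1$, can be reduced by subtracting the vector $(2N-1,2N-3,\dots)$ appropriately, or by a Sylvester-type reduction. This accounts for the factor $q^{2N-1}$ and yields the functional equation above.
\item Conclude by induction from the base case $N=1$, namely $\sum_{\lambda_1\ge0}q^{\lambda_1}=1/(1-q)$. Then expand $\prod_{i\le N}(1-q^{2i-1})^{-1}$ as the generating function of odd partitions with largest part at most $2N-1$.
\end{enumerate}

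The main obstacle is step 2. A naive subtraction of $(N,N-1,\dots,1)$ preserves the lecture hall condition but yields the wrong weight, since it removes $N(N+1)/2$ rather than $2N-1$. To fix this we will first try Yee's refinement: prove instead the stronger bivariate identity
\[
\sum x^{\lambda_N}\,u^{\lceil\lambda_1/N\rceil\ \text{etc.}}q^{|\lambda|},
\]
in which $x$ marks the last part, by a recurrence in $x\mapsto xq$. This is exactly Andrews' two-variable functional equation approach, which closes, and we then specialize $x=1$ at the end. If that bookkeeping becomes unwieldy, we fall back on the Eriksson ``$(k,l)$-sequence'' reformulation, where the weight transfer is linear.
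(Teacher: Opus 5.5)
\paragraph{Gap: step~1 is false.} Dropping a trailing zero does \emph{not} preserve the lecture hall condition, because the denominators shift. In length $N$ the part $\lambda_i$ enters as $\lambda_i/(N-i+1)$; in length $N-1$ it enters as $\lambda_i/(N-i)$. For example, $(3,2,0)$ is a lecture hall partition of length $3$ ($3/3\geq 2/2\geq 0/1$), but $(3,2)$ is not one of length $2$ ($3/2<2/1$). The same happens with $(N,N-1,\ldots,2,0)$ for every $N\geq 3$. Only the converse, appending a zero, is valid, as the paper remarks in its introduction. So for $N\geq3$ the lecture hall partitions of length $N$ with $\lambda_N=0$ strictly contain those of length $N-1$. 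Hence the split into ``$\lambda_N=0$'' versus ``$\lambda_N\geq1$'' cannot realize $L_N=L_{N-1}+q^{2N-1}L_N$.

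\paragraph{Step~2 cannot be repaired as intended.} For $N=3$ the smallest lecture hall partition with $\lambda_3\geq1$ is $(3,2,1)$, of weight $6$, so these partitions are not generated by $q^{5}L_3$. In fact, subtracting $(N,N-1,\ldots,1)$ is an exact bijection from the partitions with $\lambda_N\geq1$ onto all lecture hall partitions of length $N$. That part therefore contributes precisely $q^{\binom{N+1}{2}}L_N$. The ``wrong weight'' you noticed is the correct weight for this decomposition. The whole content of the theorem is thus pushed into the generating function of the partitions with $\lambda_N=0$. Your proposal never computes it; it is left to a $q$-Chu--Vandermonde sum written with an ellipsis and to references to Andrews, Yee and Eriksson.

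\paragraph{What is missing.} You need an actual mechanism producing the factor $1/(1-q^{2N-1})$. One option is the refined recursion from your first paragraph: delete $\lambda_1$, which does leave a lecture hall partition of length $N-1$, and track $\lambda_2$ modulo $N-1$. That recursion would have to be written down explicitly and shown to close.

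\paragraph{The paper's route.} The paper does not reprove this theorem directly; it quotes it. Within the paper it follows bijectively from $\varphi$. Theorems~\ref{thmflhleq} and~\ref{thmflhgeq} give $(g(\lambda)+1)/2=\ell_h(\varphi(\lambda))$. Since appending zeros preserves the lecture hall condition, lecture hall partitions of length $N$ are exactly distinct partitions with $\ell_h\leq N$, padded by zeros. Summing Theorem~\ref{thmLhr} over lecture hall lengths at most $N$ then yields the statement.
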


Since lecture hall partitions always have distinct nonzero parts, Euler's partition theorem is the limiting case of the lecture hall partition theorem as $N$ tends to infinity. Therefore, we can say that lecture hall partition theorem is a finite version of Euler's partition theorem.

In fact, we can derive a refinement of Euler's identity from Theorem \ref{themLH}. Given a distinct partition $\lambda$, we can always change $\lambda$ to be a lecture hall partition by appending some zero parts to $\lambda$. Moreover, if $\lambda$ is a lecture hall partition of length $N$, it is easy to prove that $\lambda$ is also a lecture hall partition of length $N+1$. For instance, if $\lambda=(5,4,1)$, since
\begin{align*}
&\frac{5}{3}\ngeq\frac{4}{2}\geq\frac{1}{1},\\
&\frac{5}{4}\ngeq\frac{4}{3}\geq\frac{1}{2}\geq\frac{0}{1},\\
&\frac{5}{5}\geq\frac{4}{4}\geq\frac{1}{3}\geq\frac{0}{2}\geq\frac{0}{1},
\end{align*}
we must add at least two zero parts to $\lambda$ to satisfy the lecture hall condition. This inspires us to propose  the notion of lecture hall length of $\lambda$, denoted $\ell_h(\lambda)$, which is defined to be the length of the corresponding lecture hall partition obtained from $\lambda$ by adding the fewest zero parts. For example, if $\lambda=(5,4,1)$ and $\pi=(6,2,1)$, then $\ell_h(\lambda)=5$ and $\ell_h(\pi)=3$. In Section \ref{seclh}, we will give a formula to compute the lecture hall length of a distinct partition.

Now we can state Bousquet-M\'{e}lou and Eriksson's refinement as follows.
\begin{theorem}\label{thmLhr}
The number of odd partitions of $n$ with largest part being $2N-1$ is equal to the number of distinct partitions of $n$ with the lecture hall length being $N$. In the notation of generating functions, we have
\begin{align*}
\sum\limits_{\lambda\in\mathcal{O}}w^{(g(\lambda)+1)/2}q^{|\lambda|}
=\sum\limits_{\pi\in\mathcal{D}}w^{\ell_h(\pi)}q^{|\pi|}.
\end{align*}
\end{theorem}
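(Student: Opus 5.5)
Theorem \ref{thmLhr} follows from Theorem \ref{themLH} by differencing in $N$. For $N\ge 0$ let $L_N(n)$ be the number of lecture hall partitions of $n$ of length $N$. Let $O_{\le N}(n)$ be the number of odd partitions of $n$ with largest part at most $2N-1$, where the empty partition counts and is given $g=-1$ by convention, so that $(g+1)/2=0$.

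The argument rests on one structural fact: the set of lecture hall partitions of length $N$ corresponds to a subset of the set for length $N+1$. More precisely, stripping the zero parts identifies lecture hall partitions of length $N$ with the distinct partitions $\pi$ satisfying $\ell_h(\pi)\le N$.

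\textbf{Stripping zeros.} Every lecture hall partition has distinct nonzero parts. Indeed, if $\lambda_i=\lambda_{i+1}>0$, then $\lambda_i/(N-i+1)<\lambda_{i+1}/(N-i)$, which violates the lecture hall condition. Moreover, once some $\lambda_i=0$, the condition forces every later part to be $0$. Hence removing the zero parts gives a distinct partition $\pi$ with $\ell(\pi)\le N$.

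\textbf{Monotonicity in $N$.} I would next check the statement quoted in the introduction: if $(\pi_1,\dots,\pi_l,0,\dots,0)$ is lecture hall of length $N$, then appending one more zero gives a lecture hall partition of length $N+1$. For $i<l$ we are given $\pi_i(N-i)\ge \pi_{i+1}(N-i+1)$ and must show $\pi_i(N-i+1)\ge\pi_{i+1}(N-i+2)$. Subtracting the two, it suffices that $\pi_i\ge\pi_{i+1}$, which holds. The conditions involving zero parts are trivial.

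Consequently, for each distinct $\pi$ the set of $N$ for which $\pi$ padded to length $N$ is lecture hall is exactly $\{N: N\ge \ell_h(\pi)\}$. This set is nonempty: for $N$ large, the ratios $\pi_i/(N-i+1)$ are decreasing in $i$, since that requires $\pi_i(N-i)\ge\pi_{i+1}(N-i+1)$, i.e.\ $N(\pi_i-\pi_{i+1})\ge$ a bounded quantity, and $\pi_i-\pi_{i+1}\ge1$. Hence
\[
L_N(n)=\#\{\pi\in\mathcal D:\ |\pi|=n,\ \ell_h(\pi)\le N\}.
\]
For the empty partition I take $\ell_h=0$.

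\textbf{Differencing.} By Theorem \ref{themLH}, $L_N(n)=O_{\le N}(n)$ for all $N\ge0$. Subtracting the $N-1$ case from the $N$ case gives, for $N\ge1$,
\[
\#\{\pi\in\mathcal D: |\pi|=n,\ \ell_h(\pi)=N\}=\#\{\lambda\in\mathcal O: |\lambda|=n,\ g(\lambda)=2N-1\}.
\]
The $N=0$ case matches the empty partitions on both sides. Multiplying by $w^Nq^n$ and summing over $N$ and $n$ gives the stated generating function identity.

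The only step needing care is the monotonicity in $N$, together with the conventions for the empty partition. All the combinatorial depth is carried by Theorem \ref{themLH}.
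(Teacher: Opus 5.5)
Your proof is correct, and it is the derivation the paper itself sketches just before stating Theorem \ref{thmLhr}: pad a distinct partition with zeros, note that the lecture hall property passes from length $N$ to length $N+1$ so that $\ell_h$ is well defined, and difference Theorem \ref{themLH} in $N$. You have also supplied the verifications the paper calls ``easy'', namely adding $\pi_i\ge\pi_{i+1}$ to the length-$N$ inequality, showing that some valid $N$ exists, and fixing conventions for the empty partition; separately, the paper recovers the same statement bijectively as a specialization of Theorem \ref{thmmain}, via Theorems \ref{thmflhleq} and \ref{thmflhgeq}.
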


It is surprising that we can unify Theorem \ref{thmfr} and Theorem \ref{thmLhr} together.
\begin{theorem}\label{thmmain}
The odd partitions of $n$ into $l$ parts with $i$ distinct parts congruent to $1$ modulo $4$ and each occurring an odd number of times, $j$ distinct parts congruent to $3$ modulo $4$ and each occurring an odd number of times, block odd index $k$, and largest part $2N-1$ are equinumerous with the distinct partitions of $n$ into $k$ parts with alternating sum $l$, $i$ odd indexed odd parts and $j$ even indexed odd parts, and lecture hall length $N$.
Equivalently,
\begin{align*}
\sum_{\lambda\in\mathcal{O}}x^{\ell(\lambda)}y_1^{o_{1,4}(\lambda)}y_2^{{o_{3,4}(\lambda)}}z^{b_o(\lambda)}
w^{(g(\lambda)+1)/2}q^{|\lambda|}
=\sum_{\pi\in\mathcal{D}}x^{\ell_a(\pi)}y_1^{n_{o,o}(\pi)}y_2^{n_{o,e}(\pi)}z^{\ell(\pi)}w^{\ell_{h}(\pi)}q^{|\pi|}.
\end{align*}
\end{theorem}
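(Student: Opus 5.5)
The plan is to construct an explicit bijection $\Phi:\mathcal{O}\to\mathcal{D}$ that carries all five statistics at once, rather than combining Theorem \ref{thmfr} and Theorem \ref{thmLhr}. Two separate equidistributions do not give a joint one. The first step is a usable formula for $\ell_h$. For $\pi=(\pi_1,\ldots,\pi_l)\in\mathcal{D}$, set $\pi_{l+1}=0$ and pad with zeros to length $N$. The condition $\pi_i/(N-i+1)\ge \pi_{i+1}/(N-i)$ is equivalent to $(N-i)(\pi_i-\pi_{i+1})\ge \pi_{i+1}$. Comparisons among the padded zeros are automatic. Hence
\[
\ell_h(\pi)=\max\Big(l,\ \max_{1\le i\le l}\Big(i+\Big\lceil \frac{\pi_{i+1}}{\pi_i-\pi_{i+1}}\Big\rceil\Big)\Big).
\]
This localizes $\ell_h$ to gaps between consecutive parts, so it can be tracked under local surgery on $\pi$.

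Next I would define $\Phi$ recursively along the block decomposition of $\lambda$ that defines $b_o$. The blocks are processed from the smallest values upward. Each block with two distinct values $(2s+1)^{a},(2s-1)^{b}$ contributes two new parts of the image, and the final block $(1^{c})$ contributes one part. This gives $\ell(\Phi(\lambda))=b_o(\lambda)$. I would take the two new parts to be $\pi_{2r-1},\pi_{2r}$, determined by $a,b$ and the total size of the blocks already processed.

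The alternating sum is controlled by the fact that $\pi_{2r-1}-\pi_{2r}$ should equal the number of parts in the block, namely $a+b$ or $c$. The parities of $a$ and $b$ then decide whether $\pi_{2r-1}$ and $\pi_{2r}$ are odd. The residue of the block's values modulo $4$ records which of the odd-indexed and even-indexed positions carries the odd part. This matches $o_{1,4}\mapsto n_{o,o}$ and $o_{3,4}\mapsto n_{o,e}$. The previously built parts are shifted uniformly by an even amount, fixed by $s$, so their parities and relative alternating sums are unchanged. This is how I expect the construction of Wang and Zhang \cite{WZh23} for Theorem \ref{thmfr} to work, and I would reuse it as the skeleton.

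The main obstacle is showing that the same $\Phi$ sends $g(\lambda)=2N-1$ to $\ell_h(\Phi(\lambda))=N$. By the formula above, this means identifying which gap attains the maximum. I would prove by induction on the number of blocks that the maximum is attained at the gap created by the top block. The claim to establish is that this gap satisfies $i+\lceil \pi_{i+1}/(\pi_i-\pi_{i+1})\rceil = (g(\lambda)+1)/2$. I would also need that lower gaps, after the uniform even shift, give strictly smaller values.

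If the Wang–Zhang bijection does not have this property, the fallback is to modify the shift so that it is tied to the top value $2t-1$ rather than only to sizes. The modified map must still add exactly the right weight and preserve the four other statistics. Its invertibility would then be checked by recovering the top block from $\ell_h$ and from $\pi_1-\pi_2$. As a sanity check, the $w$-marginal of the resulting identity should reproduce Theorem \ref{thmLhr}.
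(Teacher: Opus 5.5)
\textbf{There are two genuine gaps: the block-by-block map you build on cannot exist, and the lecture-hall step is an unproved claim that is in fact false.}

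Your formula for $\ell_h$ is correct. It is Theorem~\ref{thmellh}, since $\lceil \pi_i/(\pi_i-\pi_{i+1})\rceil+i-1=i+\lceil \pi_{i+1}/(\pi_i-\pi_{i+1})\rceil$. The trouble is the skeleton itself.

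Take $\lambda=(7,1^3)$. Its blocks are $(7)$ and $(1^3)$, so $b_o(\lambda)=3$ and $|\lambda|=10$. Your rules demand $\pi_1-\pi_2=1$, $\pi_3=3$ and $\pi_1+\pi_2+\pi_3=10$, which forces $\pi_2=\pi_3=3$, so $\pi$ is not distinct.

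In fact the theorem pins the image down. The only odd partitions of $10$ with $4$ parts and $b_o=3$ are $(7,1^3)$ and $(5,3,1^2)$. Both have $o_{1,4}=o_{3,4}=1$, with largest parts $7$ and $5$. The only distinct partitions of $10$ with $3$ parts and $\ell_a=4$ are $(5,3,2)$ and $(6,3,1)$. Both have $n_{o,o}=n_{o,e}=1$, with $\ell_h=4$ and $3$ respectively. So every bijection proving the statement must send $(7,1^3)\mapsto(5,3,2)$ and $(5,3,1^2)\mapsto(6,3,1)$. In neither case is $\pi_1-\pi_2$ the number of parts in the top block ($2\neq 1$ and $3\neq 2$).

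Separately, a uniform even shift of previously built parts changes $\ell_a$ whenever $b_o$ is odd. The unpaired last part from $(1^c)$ enters $\ell_a$ with sign $+$, so shifting it changes the total.

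The alternating sum is simply not a blockwise quantity. In the paper it comes from a global iteration. The map $\tau$ lowers one part by $2$ in every weight-$2$ block and deletes one $1$ from a $(1^a)$ block, so $|\tau(\lambda)|=|\lambda|-b_o(\lambda)$. Then $\pi$ is the conjugate of $\mathbf{s}=(b_o(\lambda),b_o(\tau(\lambda)),\ldots)$. With this, $\pi_{2i-1}-\pi_{2i}$ counts the iterations at which $b_o=2i-1$, and these are exactly the iterations that delete a part.

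You correctly identify the lecture hall statistic as the crux, but you leave it as a claim, and the claim is false. For the forced image $(5,3,2)$ your formula gives $1+\lceil 3/2\rceil=3$ at $i=1$ but $2+\lceil 2/1\rceil=4$ at $i=2$. So the maximum is not at the gap $\pi_1-\pi_2$ that your construction attaches to the top block.

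More generally, the maximizing index can lie deep inside $\pi$. For example, $\varphi(9,7^2,5^7,3^2,1^8)=(32,22,13,4,1)$ attains its maximum only at $i=4,5$. An induction that inspects only the newest gap therefore cannot succeed.

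The paper needs a two-sided global argument:
\begin{itemize}
\item For $(g(\lambda)+1)/2\le\ell_h(\pi)$ (Theorem~\ref{thmflhleq}), it follows the first chain of $\tau^r(\lambda)$ through repeated applications of $\tau^{\delta}$. It uses $\delta$-fixed chains to show that block patterns are not disturbed. This produces an index $j$ with $m_j=\delta$ and $\lceil\pi_j/m_j\rceil+j-1=(g(\lambda)+1)/2$.
\item For the reverse inequality (Theorem~\ref{thmflhgeq}), it starts at the lecture hall position $j$. It uses $m_j<\min\{m_1,\ldots,m_{j-1}\}$ and $m_j\le m_{j+1}$ (Theorem~\ref{thmpm}, Remark~\ref{remmj}). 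It then rebuilds $\lambda$ via the inverse weight-adding operation in batches of $m_j$ weights, forcing $g(\lambda)\ge 2\ell_h(\pi)-1$.
\end{itemize}

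Nothing in your plan supplies an argument of this kind, including the fallback of retuning the shift. The plan also rests on a guessed form of the Wang--Zhang construction rather than on a map you have actually defined.
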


See Figure \ref{figsr} for an illustration of our strengthened refinement.

\begin{figure}[ht]
\begin{center}
\begin{tikzpicture}

\draw (0,0)--(7.6,0);     \draw (0,0.6)--(7.6,0.6);  \draw (0,1.2)--(7.6,1.2);
\draw (0,1.8)--(7.6,1.8); \draw (0,2.4)--(7.6,2.4);  \draw (0,3)--(7.6,3);
\draw (0,3.6)--(7.6,3.6); \draw (0,4.2)--(7.6,4.2);  \draw (0,4.8)--(7.6,4.8);
\draw (0,5.4)--(7.6,5.4); \draw (0,6)--(7.6,6);      \draw (0,6.8)--(7.6,6.8);

\draw (0,0)--(0,6.8); \draw (1.6,0)--(1.6,6.8); \draw (2.6,0)--(2.6,6.8); \draw (4,0)--(4,6.8);
\draw (5.4,0)--(5.4,6.8); \draw (6.4,0)--(6.4,6.8); \draw (7.6,0)--(7.6,6.8);

\node at (0.8,6.4) {$\lambda\in\mathcal{O}$}; \node at (2.1,6.4) {$\ell(\lambda)$};
\node at (3.3,6.4) {$o_{1,4}(\lambda)$}; \node at (4.7,6.4) {$o_{3,4}(\lambda)$};
\node at (5.9,6.4) {$b_o(\lambda)$}; \node at (7,6.4) {$\frac{g(\lambda)+1}{2}$};

\node at (0.8,5.7) {$(9,1)$}; \node at (2.1,5.7) {$2$}; \node at (3.3,5.7) {$2$};
\node at (4.7,5.7) {$0$}; \node at (5.9,5.7) {$3$}; \node at (7,5.7) {$5$};

\node at (0.8,5.1) {$(7,3)$}; \node at (2.1,5.1) {$2$}; \node at (3.3,5.1) {$0$};
\node at (4.7,5.1) {$2$}; \node at (5.9,5.1) {$4$}; \node at (7,5.1) {$4$};

\node at (0.8,4.5) {$(7,1^3)$}; \node at (2.1,4.5) {$4$}; \node at (3.3,4.5) {$1$};
\node at (4.7,4.5) {$1$}; \node at (5.9,4.5) {$3$}; \node at (7,4.5) {$4$};

\node at (0.8,3.9) {$(5^2)$}; \node at (2.1,3.9) {$2$}; \node at (3.3,3.9) {$0$};
\node at (4.7,3.9) {$0$}; \node at (5.9,3.9) {$2$}; \node at (7,3.9) {$3$};

\node at (0.8,3.3) {$(5,3,1^2)$}; \node at (2.1,3.3) {$4$}; \node at (3.3,3.3) {$1$};
\node at (4.7,3.3) {$1$}; \node at (5.9,3.3) {$3$}; \node at (7,3.3) {$3$};

\node at (0.8,2.7) {$(5,1^5)$}; \node at (2.1,2.7) {$6$}; \node at (3.3,2.7) {$2$};
\node at (4.7,2.7) {$0$}; \node at (5.9,2.7) {$3$}; \node at (7,2.7) {$3$};

\node at (0.8,2.1) {$(3^3,1)$}; \node at (2.1,2.1) {$4$}; \node at (3.3,2.1) {$1$};
\node at (4.7,2.1) {$1$}; \node at (5.9,2.1) {$2$}; \node at (7,2.1) {$2$};

\node at (0.8,1.5) {$(3^2,1^4)$}; \node at (2.1,1.5) {$6$}; \node at (3.3,1.5) {$0$};
\node at (4.7,1.5) {$0$}; \node at (5.9,1.5) {$2$}; \node at (7,1.5) {$2$};

\node at (0.8,0.9) {$(3,1^7)$}; \node at (2.1,0.9) {$8$}; \node at (3.3,0.9) {$1$};
\node at (4.7,0.9) {$1$}; \node at (5.9,0.9) {$2$}; \node at (7,0.9) {$2$};

\node at (0.8,0.3) {$(1^{10})$}; \node at (2.1,0.3) {$10$}; \node at (3.3,0.3) {$0$};
\node at (4.7,0.3) {$0$}; \node at (5.9,0.3) {$1$}; \node at (7,0.3) {$1$};


\draw (7.7,0)--(15.4,0);     \draw (7.7,0.6)--(15.4,0.6);  \draw (7.7,1.2)--(15.4,1.2);
\draw (7.7,1.8)--(15.4,1.8); \draw (7.7,2.4)--(15.4,2.4);  \draw (7.7,3)--(15.4,3);
\draw (7.7,3.6)--(15.4,3.6); \draw (7.7,4.2)--(15.4,4.2);  \draw (7.7,4.8)--(15.4,4.8);
\draw (7.7,5.4)--(15.4,5.4); \draw (7.7,6)--(15.4,6);      \draw (7.7,6.8)--(15.4,6.8);

\draw (7.7,0)--(7.7,6.8); \draw (9.5,0)--(9.5,6.8); \draw (10.5,0)--(10.5,6.8); \draw (11.9,0)--(11.9,6.8);
\draw (13.3,0)--(13.3,6.8); \draw (14.3,0)--(14.3,6.8); \draw (15.4,0)--(15.4,6.8);

\node at (8.6,6.4) {$\pi\in\mathcal{D}$}; \node at (10,6.4) {$\ell_a(\pi)$};
\node at (11.2,6.4) {$n_{o,o}(\pi)$}; \node at (12.6,6.4) {$n_{o,e}(\pi)$};
\node at (13.8,6.4) {$\ell(\pi)$}; \node at (14.85,6.4) {$\ell_h(\pi)$};

\node at (8.6,5.7) {$(5,4,1)$}; \node at (10,5.7) {$2$}; \node at (11.2,5.7) {$2$};
\node at (12.6,5.7) {$0$}; \node at (13.8,5.7) {$3$}; \node at (14.85,5.7) {$5$};

\node at (8.6,5.1) {$(4,3,2,1)$}; \node at (10,5.1) {$2$}; \node at (11.2,5.1) {$0$};
\node at (12.6,5.1) {$2$}; \node at (13.8,5.1) {$4$}; \node at (14.85,5.1) {$4$};

\node at (8.6,4.5) {$(5,3,2)$}; \node at (10,4.5) {$4$}; \node at (11.2,4.5) {$1$};
\node at (12.6,4.5) {$1$}; \node at (13.8,4.5) {$3$}; \node at (14.85,4.5) {$4$};

\node at (8.6,3.9) {$(6,4)$}; \node at (10,3.9) {$2$}; \node at (11.2,3.9) {$0$};
\node at (12.6,3.9) {$0$}; \node at (13.8,3.9) {$2$}; \node at (14.85,3.9) {$3$};

\node at (8.6,3.3) {$(6,3,1)$}; \node at (10,3.3) {$4$}; \node at (11.2,3.3) {$1$};
\node at (12.6,3.3) {$1$}; \node at (13.8,3.3) {$3$}; \node at (14.85,3.3) {$3$};

\node at (8.6,2.7) {$(7,2,1)$}; \node at (10,2.7) {$6$}; \node at (11.2,2.7) {$2$};
\node at (12.6,2.7) {$0$}; \node at (13.8,2.7) {$3$}; \node at (14.85,2.7) {$3$};

\node at (8.6,2.1) {$(7,3)$}; \node at (10,2.1) {$4$}; \node at (11.2,2.1) {$1$};
\node at (12.6,2.1) {$1$}; \node at (13.8,2.1) {$2$}; \node at (14.85,2.1) {$2$};

\node at (8.6,1.5) {$(8,2)$}; \node at (10,1.5) {$6$}; \node at (11.2,1.5) {$0$};
\node at (12.6,1.5) {$0$}; \node at (13.8,1.5) {$2$}; \node at (14.85,1.5) {$2$};

\node at (8.6,0.9) {$(9,1)$}; \node at (10,0.9) {$8$}; \node at (11.2,0.9) {$1$};
\node at (12.6,0.9) {$1$}; \node at (13.8,0.9) {$2$}; \node at (14.85,0.9) {$2$};

\node at (8.6,0.3) {$(10)$}; \node at (10,0.3) {$10$}; \node at (11.2,0.3) {$0$};
\node at (12.6,0.3) {$0$}; \node at (13.8,0.3) {$1$}; \node at (14.85,0.3) {$1$};

\end{tikzpicture}\caption{An illustration of Theorem \ref{thmmain}.}\label{figsr}
\end{center}
\end{figure}

\begin{remark}
While Euler's partition theorem and the lecture hall partition theorem admit numerous bijective proofs (see \cite{Sav16} and the references therein), none of these bijections extends to Theorem \ref{thmmain}.
\end{remark}

The rest of the paper is organized as follows. In Section \ref{secmap}, we construct a simple bijection from odd partitions to distinct partitions, which is devised to prove our main theorem combinatorially. The goal of Section \ref{secsta} is to prove the first four pairs of statistics. In Section \ref{seclh}, we present a formula to compute the lecture hall length and discuss its relevant properties. In Section \ref{secchain}, we study the chains of odd partitions and establish some results about the $\delta$-fixed chains. The contribution of Sections \ref{secub} and  \ref{seclb} is to prove the last pair of statistics in our main theorem. Finally, we give some concluding remarks in Section \ref{seccr}.

\section{Bijection}\label{secmap}
In this section, we aim to construct a simple bijection $\varphi$ from $\mathcal{O}$ to $\mathcal{D}$, which will provide us with a bijective proof of Theorem \ref{thmmain}.

Given a partition $\lambda\in\mathcal{O}$, we define an operation $\tau$ on each block of $\lambda$ as follows.
\begin{itemize}
\item For a block of weight $2$, assuming that it is $(t^a,(t-2)^b)$ where $a\geq 1$ and $b\geq0$, we change it
      to be $(t^{a-1},(t-2)^{b+1})$. Namely, we change a part of size $t$ to be a part of size $t-2$.
\item For a block of weight $1$, then it must be $(1^a)$ where $a\geq1$. We change it to be $(1^{a-1})$. Namely,
      we remove a part of size $1$.
\end{itemize}
Let $\tau(\lambda)$ denote the produced new partition. Clearly, we have $|\tau(\lambda)|=|\lambda|-b_o(\lambda)$. Furthermore, $b_o(\tau(\lambda))=b_o(\lambda)-1$ if the last block of $\lambda$ is $(3,1^a)$ or $(1)$; otherwise $b_o(\tau(\lambda))=b_o(\lambda)$.

For example, if $\lambda=(23^5,19^8,17^3,15^3,13^6,9,7^4,5^2,3^5,1)$, we group its parts into blocks
\[(23^5)(19^8,17^3)(15^3,13^6)(9,7^4)(5^2,3^5)(1),\]
and then change its blocks as follows
\begin{align*}
(23^5)&\rightarrow(23^4,21),\\
(19^8,17^3)&\rightarrow(19^7,17^4),\\
(15^3,13^6)&\rightarrow(15^2,13^7),\\
(9,7^4)&\rightarrow(7^5),\\
(5^2,3^5)&\rightarrow(5,3^6),\\
(1)&\rightarrow\emptyset.
\end{align*}
Thus, $\tau(\lambda)=(23^4,21,19^7,17^4,15^2,13^7,7^5,5,3^6)$ and $b_o(\tau(\lambda))=10$.

We now repeat this transformation until the partition is completely emptied. As a result, we produce a nonincreasing sequence $\mathbf{s}(\lambda)=(s_1,s_2,s_3\ldots)$ where
\[s_1=b_o(\lambda), s_2=b_o(\tau(\lambda)), s_3=b_o(\tau^2(\lambda)),\ldots.\]
Note that $\mathbf{s}$ is a gap-free sequence, which has $1$ as its last entry and satisfies $0\leq s_i-s_{i+1}\leq 1$ for all $i\geq1$. In addition, $s_1+s_2+s_3+\cdots=|\lambda|$.

To see the algorithm more clearly, we treat the case where $\lambda=(9^3,7,3^2,1^4)$, and illustrate the transformations as follows
\begin{align*}
(9^3,7,3^2,1^4)
&\stackrel{-4}{\longrightarrow}(9^2,7^2,3,1^5)\stackrel{-4}{\longrightarrow}(9,7^3,1^6)
\stackrel{-3}{\longrightarrow}(7^4,1^5)\stackrel{-3}{\longrightarrow}(7^3,5,1^4)
\stackrel{-3}{\longrightarrow}(7^2,5^2,1^3)\\
&\stackrel{-3}{\longrightarrow}(7,5^3,1^2)\stackrel{-3}{\longrightarrow}(5^4,1)
\stackrel{-3}{\longrightarrow}(5^3,3)\stackrel{-2}{\longrightarrow}(5^2,3^2)
\stackrel{-2}{\longrightarrow}(5,3^3)\stackrel{-2}{\longrightarrow}(3^4)\\
&\stackrel{-2}{\longrightarrow}(3^3,1)\stackrel{-2}{\longrightarrow}(3^2,1^2)
\stackrel{-2}{\longrightarrow}(3,1^3)\stackrel{-2}{\longrightarrow}(1^4)
\stackrel{-1}{\longrightarrow}(1^3)\stackrel{-1}{\longrightarrow}(1^2)\\
&\stackrel{-1}{\longrightarrow}(1)\stackrel{-1}{\longrightarrow}\emptyset,
\end{align*}
where the symbol $\lambda\stackrel{-b_o(\lambda)}{\longrightarrow}$ means that $\lambda$ is reduced by $b_o(\lambda)$. So,
\[\mathbf{s}(\lambda)=(4,4,3,3,3,3,3,3,2,2,2,2,2,2,2,1,1,1,1).\]

Based on $\mathbf{s}(\lambda)$, we can construct a distinct partition $\pi=(\pi_1,\pi_2,\ldots)$ where $\pi_i$ is defined to be the number of entries in $\mathbf{s}(\lambda)$ greater than or equal to $i$. For instance, continuing the previous example, we get $\pi=(19,15,8,2)$. Now the construction of $\varphi$ is finished, and $\pi$ is the desired $\varphi(\lambda)$.

We next show that the procedure $\varphi$ is invertible.

We consider another operation $\lambda\stackrel{+w}{\longrightarrow}$, adding a weight $w$ to a partition $\lambda$ where $w=b_o(\lambda)$ or $b_o(\lambda)+1$. The operation $\lambda\stackrel{+w}{\longrightarrow}$ is defined as follows. We first group the parts of $\lambda$ into blocks, which is very similar to that in the definition of block odd index. But this time we start from the smallest part, and take into account the parity of $w$. More precisely, if $w$ is even, we group the parts in the ordinary manner, and change each block $((t+2)^a,t^b)$ to $((t+2)^{a+1},t^{b-1})$ where $a\geq0$ and $b\geq1$. If $w$ is odd, we let all $1$'s inhabit a block regardless of whether $3$ exists or not, and group the remaining parts in the ordinary manner. We change the block $(1^a)$ to $(1^{a+1})$ where $a\geq 0$, and change the other blocks as above.

For example, let $\lambda=(23^4,21,19^7,17^4,15^2,13^7,7^5,5,3^6)$. Clearly, $b_o(\lambda)=10$. If $w=10$, then we group the parts into blocks
\[(23^4,21)(19^7,17^4)(15^2,13^7)(7^5)(5,3^6),\]
and change the blocks as follows
\begin{align*}
(23^4,21)&\to(23^5),\\
(19^7,17^4)&\to(19^8,17^3),\\
(15^2,13^7)&\to(15^3,13^6),\\
(7^5)&\to(9,7^4),\\
(5,3^6)&\to(5^2,3^5).
\end{align*}
Thus, $\lambda\stackrel{+w}{\longrightarrow}$ is $(23^5,19^8,17^3,15^3,13^6,9,7^4,5^2,3^5)$. If $w=11$, then we group the parts in the same way but add an empty block $(1^0)$ this time, and change the blocks as follows
\begin{align*}
(23^4,21)&\to(23^5),\\
(19^7,17^4)&\to(19^8,17^3),\\
(15^2,13^7)&\to(15^3,13^6),\\
(7^5)&\to(9,7^4),\\
(5,3^6)&\to(5^2,3^5),\\
(1^0)&\to(1).
\end{align*}
Thus, $\lambda\stackrel{+w}{\longrightarrow}$ is $(23^5,19^8,17^3,15^3,13^6,9,7^4,5^2,3^5,1)$.

Given a distinct partition $\pi=(\pi_1,\pi_2,\ldots,\pi_k)$, we produce a gap-free sequence \[\mathbf{s}=(\underbrace{k,k,\ldots,k}_{\pi_k},\underbrace{k-1,k-1,\ldots,k-1}_{\pi_{k-1}-\pi_k},\ldots,
\underbrace{2,2,\ldots,2}_{\pi_2-\pi_3},\underbrace{1,1,\ldots,1}_{\pi_1-\pi_2}).\]
We view each entry of $\mathbf{s}$ as a weight, for which we perform the operation $\lambda\stackrel{+w}{\longrightarrow}$ successively, starting from the empty partition and adding the weights from $1$ to $k$. Finally, we arrive at an odd partition $\lambda$ such that $|\lambda|=|\pi|$. For instance, taking $\pi=(24,20,13,7,3)$, then
\[\mathbf{s}=(5,5,5,4,4,4,4,3,3,3,3,3,3,2,2,2,2,2,2,2,1,1,1,1).\]
The process of adding entries of $\mathbf{s}$ is
\begin{align*}
\emptyset&\stackrel{+1}{\longrightarrow}(1)\stackrel{+1}{\longrightarrow}(1^2)\stackrel{+1}{\longrightarrow}(1^3)
\stackrel{+1}{\longrightarrow}(1^4)\\
&\stackrel{+2}{\longrightarrow}(3,1^3)\stackrel{+2}{\longrightarrow}(3^2,1^2)\stackrel{+2}{\longrightarrow}(3^3,1)
\stackrel{+2}{\longrightarrow}(3^4)\stackrel{+2}{\longrightarrow}(5,3^3)\stackrel{+2}{\longrightarrow}(5^2,3^2)
\stackrel{+2}{\longrightarrow}(5^3,3)\\
&\stackrel{+3}{\longrightarrow}(5^4)(1)\stackrel{+3}{\longrightarrow}(7,5^3)(1^2)
\stackrel{+3}{\longrightarrow}(7^2,5^2)(1^3)\stackrel{+3}{\longrightarrow}(7^3,5)(1^4)
\stackrel{+3}{\longrightarrow}(7^4)(1^5)\stackrel{+3}{\longrightarrow}(9,7^3)(1^6)\\
&\stackrel{+4}{\longrightarrow}(9^2,7^2)(3,1^5)\stackrel{+4}{\longrightarrow}(9^3,7)(3^2,1^4)
\stackrel{+4}{\longrightarrow}(9^4)(3^3,1^3)\stackrel{+4}{\longrightarrow}(11,9^3)(3^4,1^2)\\
&\stackrel{+5}{\longrightarrow}(11^2,9^2)(5,3^3)(1^3)\stackrel{+5}{\longrightarrow}(11^3,9)(5^2,3^2)(1^4)
\stackrel{+5}{\longrightarrow}(11^4)(5^3,3)(1^5).
\end{align*}
Therefore, $\lambda=(11^4,5^3,3,1^5)$.

We can give a visual description of our bijection $\varphi$. Let $\lambda=(\lambda_1,\lambda_2,\ldots)$ be a partition of $n$. The Ferrers graph of $\lambda$ is obtained by drawing a left-justified array of $n$ dots with $\lambda_i$ dots in the $i$-th row. For instance, the Ferrers graph of the partition $(5,5,3,2,2)$ is given by Figure \ref{figFer}.
\begin{figure}[ht]
\begin{center}
\begin{tikzpicture}

\draw[fill] (0,0) circle (0.08cm);  \draw[fill] (0.4,0) circle (0.08cm);

\draw[fill] (0,0.4) circle (0.08cm);  \draw[fill] (0.4,0.4) circle (0.08cm);

\draw[fill] (0,0.8) circle (0.08cm);  \draw[fill] (0.4,0.8) circle (0.08cm); \draw[fill] (0.8,0.8) circle (0.08cm);

\draw[fill] (0,1.2) circle (0.08cm);  \draw[fill] (0.4,1.2) circle (0.08cm); \draw[fill] (0.8,1.2) circle (0.08cm);
\draw[fill] (1.2,1.2) circle (0.08cm);  \draw[fill] (1.6,1.2) circle (0.08cm);

\draw[fill] (0,1.6) circle (0.08cm);  \draw[fill] (0.4,1.6) circle (0.08cm); \draw[fill] (0.8,1.6) circle (0.08cm);
\draw[fill] (1.2,1.6) circle (0.08cm);  \draw[fill] (1.6,1.6) circle (0.08cm);

\end{tikzpicture}\caption{The Ferrers graph of $(5,5,3,2,2)$.}\label{figFer}
\end{center}
\end{figure}
There are various transformations on Ferrers graphs. As an elementary example, if we interchange rows and columns in the Ferrers graph of $\lambda$, we are left with a new Ferrers graph. The corresponding partition is called the conjugate of $\lambda$, denoted $\lambda'$. Equivalently, $\lambda'$ is the partition obtained by reading the Ferrers graph of $\lambda$ from left to right, rather than from top to bottom. In particular, the conjugate of a distinct partition is a gap-free partition (a partition in which every integer less than the largest part must appear at least once).

When we perform the operation $\tau$ on $\lambda$, we in fact reduce $\lambda$ by $b_o(\lambda)$. To record this quantity, we draw a column with $b_o(\lambda)$ dots. Repeat this procedure until the partition is completely emptied. Finally, we get a Ferrers graph, whose columns form a gap-free partition, i.e., it is the Ferrers graph of a distinct partition. See Figure \ref{figbije} for an illustration, where we put the parts of a partition into one column to save space.

\begin{figure}[ht]
\begin{center}
\begin{tikzpicture}

\node at (0,2) {\footnotesize$1^3$};\node at (0,2.4) {\footnotesize$7$};
\node at (0,2.8) {\footnotesize$13$};

\draw[fill] (0.45,0) circle (0.08cm);  \draw[fill] (0.45,0.4) circle (0.08cm);
\draw[fill] (0.45,0.8) circle (0.08cm);
\draw[fill] (0.45,1.2) circle (0.08cm); \draw[fill] (0.45,1.6) circle (0.08cm);

\draw[->] (0.25,2.4)--(0.65,2.4); \node at (0.45,2.6) {\footnotesize$-5$};

\draw[->] (0.75,0.8)--(1.15,0.8);
\node at (0.95,2) {\footnotesize$1^2$};\node at (0.95,2.4) {\footnotesize$5$};
\node at (0.95,2.8) {\footnotesize$11$};

\draw[fill] (1.45,0) circle (0.08cm);  \draw[fill] (1.45,0.4) circle (0.08cm);
\draw[fill] (1.45,0.8) circle (0.08cm);
\draw[fill] (1.45,1.2) circle (0.08cm); \draw[fill] (1.45,1.6) circle (0.08cm);

\draw[fill] (1.85,0) circle (0.08cm);  \draw[fill] (1.85,0.4) circle (0.08cm);
\draw[fill] (1.85,0.8) circle (0.08cm);
\draw[fill] (1.85,1.2) circle (0.08cm); \draw[fill] (1.85,1.6) circle (0.08cm);

\draw[->] (1.25,2.4)--(2.05,2.4); \node at (1.65,2.6) {\footnotesize$-5$};

\draw[->] (2.15,0.8)--(2.55,0.8);
\node at (2.35,2) {\footnotesize$1$};\node at (2.35,2.4) {\footnotesize$3$};
\node at (2.35,2.8) {\footnotesize$9$};

\draw[fill] (2.85,0) circle (0.08cm);  \draw[fill] (2.85,0.4) circle (0.08cm);
\draw[fill] (2.85,0.8) circle (0.08cm);
\draw[fill] (2.85,1.2) circle (0.08cm); \draw[fill] (2.85,1.6) circle (0.08cm);

\draw[fill] (3.25,0) circle (0.08cm);  \draw[fill] (3.25,0.4) circle (0.08cm);
\draw[fill] (3.25,0.8) circle (0.08cm);
\draw[fill] (3.25,1.2) circle (0.08cm); \draw[fill] (3.25,1.6) circle (0.08cm);

\draw[fill] (3.65,0.4) circle (0.08cm); \draw[fill] (3.65,0.8) circle (0.08cm);
\draw[fill] (3.65,1.2) circle (0.08cm); \draw[fill] (3.65,1.6) circle (0.08cm);

\draw[->] (2.65,2.4)--(3.85,2.4); \node at (3.25,2.6) {\footnotesize$-4$};

\draw[->] (3.95,0.8)--(4.35,0.8);
\node at (4.15,2.2) {\footnotesize$1^2$};\node at (4.15,2.6) {\footnotesize$7$};

\draw[fill] (4.65,0) circle (0.08cm);  \draw[fill] (4.65,0.4) circle (0.08cm);
\draw[fill] (4.65,0.8) circle (0.08cm);
\draw[fill] (4.65,1.2) circle (0.08cm); \draw[fill] (4.65,1.6) circle (0.08cm);

\draw[fill] (5.05,0) circle (0.08cm);  \draw[fill] (5.05,0.4) circle (0.08cm);
\draw[fill] (5.05,0.8) circle (0.08cm);
\draw[fill] (5.05,1.2) circle (0.08cm); \draw[fill] (5.05,1.6) circle (0.08cm);

\draw[fill] (5.45,0.4) circle (0.08cm); \draw[fill] (5.45,0.8) circle (0.08cm);
\draw[fill] (5.45,1.2) circle (0.08cm); \draw[fill] (5.45,1.6) circle (0.08cm);

\draw[fill] (5.85,0.8) circle (0.08cm);\draw[fill] (5.85,1.2) circle (0.08cm);
\draw[fill] (5.85,1.6) circle (0.08cm);

\draw[->] (4.45,2.4)--(6.05,2.4); \node at (5.25,2.6) {\footnotesize$-3$};

\draw[->] (6.15,0.8)--(6.55,0.8);
\node at (6.35,2.2) {\footnotesize$1$};\node at (6.35,2.6) {\footnotesize$5$};

\draw[fill] (6.85,0) circle (0.08cm);  \draw[fill] (6.85,0.4) circle (0.08cm);
\draw[fill] (6.85,0.8) circle (0.08cm);
\draw[fill] (6.85,1.2) circle (0.08cm); \draw[fill] (6.85,1.6) circle (0.08cm);

\draw[fill] (7.25,0) circle (0.08cm);  \draw[fill] (7.25,0.4) circle (0.08cm);
\draw[fill] (7.25,0.8) circle (0.08cm);
\draw[fill] (7.25,1.2) circle (0.08cm); \draw[fill] (7.25,1.6) circle (0.08cm);

\draw[fill] (7.65,0.4) circle (0.08cm); \draw[fill] (7.65,0.8) circle (0.08cm);
\draw[fill] (7.65,1.2) circle (0.08cm); \draw[fill] (7.65,1.6) circle (0.08cm);

\draw[fill] (8.05,0.8) circle (0.08cm);\draw[fill] (8.05,1.2) circle (0.08cm);
\draw[fill] (8.05,1.6) circle (0.08cm);

\draw[fill] (8.45,0.8) circle (0.08cm);\draw[fill] (8.45,1.2) circle (0.08cm);
\draw[fill] (8.45,1.6) circle (0.08cm);

\draw[->] (6.65,2.4)--(8.65,2.4); \node at (7.65,2.6) {\footnotesize$-3$};

\draw[->] (8.75,0.8)--(9.15,0.8);
\node at (8.95,2.4) {\footnotesize$3$};

\draw[fill] (9.45,0) circle (0.08cm);  \draw[fill] (9.45,0.4) circle (0.08cm);
\draw[fill] (9.45,0.8) circle (0.08cm);
\draw[fill] (9.45,1.2) circle (0.08cm); \draw[fill] (9.45,1.6) circle (0.08cm);

\draw[fill] (9.85,0) circle (0.08cm);  \draw[fill] (9.85,0.4) circle (0.08cm);
\draw[fill] (9.85,0.8) circle (0.08cm);
\draw[fill] (9.85,1.2) circle (0.08cm); \draw[fill] (9.85,1.6) circle (0.08cm);

\draw[fill] (10.25,0.4) circle (0.08cm); \draw[fill] (10.25,0.8) circle (0.08cm);
\draw[fill] (10.25,1.2) circle (0.08cm); \draw[fill] (10.25,1.6) circle (0.08cm);

\draw[fill] (10.65,0.8) circle (0.08cm);\draw[fill] (10.65,1.2) circle (0.08cm);
\draw[fill] (10.65,1.6) circle (0.08cm);

\draw[fill] (11.05,0.8) circle (0.08cm);\draw[fill] (11.05,1.2) circle (0.08cm);
\draw[fill] (11.05,1.6) circle (0.08cm);

\draw[fill] (11.45,1.2) circle (0.08cm);\draw[fill] (11.45,1.6) circle (0.08cm);

\draw[->] (9.25,2.4)--(11.65,2.4); \node at (10.45,2.6) {\footnotesize$-2$};

\draw[->] (11.75,0.8)--(12.15,0.8);
\node at (11.95,2.4) {\footnotesize$1$};

\draw[fill] (12.45,0) circle (0.08cm);  \draw[fill] (12.45,0.4) circle (0.08cm);
\draw[fill] (12.45,0.8) circle (0.08cm);
\draw[fill] (12.45,1.2) circle (0.08cm); \draw[fill] (12.45,1.6) circle (0.08cm);

\draw[fill] (12.85,0) circle (0.08cm);  \draw[fill] (12.85,0.4) circle (0.08cm);
\draw[fill] (12.85,0.8) circle (0.08cm);
\draw[fill] (12.85,1.2) circle (0.08cm); \draw[fill] (12.85,1.6) circle (0.08cm);

\draw[fill] (13.25,0.4) circle (0.08cm); \draw[fill] (13.25,0.8) circle (0.08cm);
\draw[fill] (13.25,1.2) circle (0.08cm); \draw[fill] (13.25,1.6) circle (0.08cm);

\draw[fill] (13.65,0.8) circle (0.08cm);\draw[fill] (13.65,1.2) circle (0.08cm);
\draw[fill] (13.65,1.6) circle (0.08cm);

\draw[fill] (14.05,0.8) circle (0.08cm);\draw[fill] (14.05,1.2) circle (0.08cm);
\draw[fill] (14.05,1.6) circle (0.08cm);

\draw[fill] (14.45,1.2) circle (0.08cm);\draw[fill] (14.45,1.6) circle (0.08cm);

\draw[fill] (14.85,1.6) circle (0.08cm);

\draw[->] (12.25,2.4)--(14.65,2.4); \node at (13.45,2.6) {\footnotesize$-1$};
\node at (14.95,2.4) {\footnotesize$\emptyset$};

\end{tikzpicture}\caption{An illustration of the bijection $\varphi$.}\label{figbije}
\end{center}
\end{figure}

We now know that the gap-free sequence $\mathbf{s}$ plays a crucial role in the bijection $\varphi$. On the one hand, it records the quantity reduced each time when we perform the operation $\tau$. In fact, this quantity equals the block odd index of one odd partition. On the other hand, if we view $\mathbf{s}$ as a partition, then it is the conjugate of one distinct partition. Namely, based on $\mathbf{s}$, we can produce an odd partition $\lambda$ and a distinct partition $\pi$ such that $\pi=\varphi(\lambda)$. From now on, we say that $\mathbf{s}$ is the gap-free sequence associated with $\lambda$ or corresponding to $\lambda$.

\section{Statistics}\label{secsta}

Given an odd partition $\lambda$, we always use $\pi$ to denote $\varphi(\lambda)$ in the sequel. In this section, we aim to prove $\ell(\lambda)=\ell_a(\pi),o_{1,4}(\lambda)=n_{o,o}(\pi),o_{3,4}(\lambda)=n_{o,e}(\pi)$ and
$b_o(\lambda)=\ell(\pi)$.

For any odd partition $\lambda$, we know from the definition of $\tau$ that $\ell(\tau(\lambda))=\ell(\lambda)-1$ if and only $b_o(\lambda)$ is odd. Therefore, the length of $\lambda$ is equal to the number of times that $b_o(\tau^{i}(\lambda))$ is odd when we perform the operation $\tau$ until the partition is completely emptied, which equals the number of odd integers in the sequence $\mathbf{s}$. Recall that $\mathbf{s}$ is essentially the conjugate of $\pi$. Moreover, $\pi_{2i-1}-\pi_{2i}$ is the number of parts of size $2i-1$ in the conjugate of $\pi$. Thus, \[\ell_a(\pi)=(\pi_{1}-\pi_{2})+(\pi_3-\pi_4)+\cdots\] counts the number of odd integers in $\mathbf{s}$. We now conclude that $\ell(\lambda)=\ell_a(\pi)$.

To prove $o_{1,4}(\lambda)=n_{o,o}(\pi)$ and $o_{3,4}(\lambda)=n_{o,e}(\pi)$, we need some additional statistics, which are explored from the bijection $\varphi$.

To perform $\varphi$, we need to group the parts of $\lambda$ into blocks from largest to smallest. If a block of weight $2$ has only one distinct integer, letting it be $(t^a)$ where $a\geq 1$, we change it to be $(t^a,(t-2)^0)$. Namely, we regard $t-2$ as a part of $\lambda$, which just occurs zero times. So every block of weight $2$ has now two distinct integers. We still use parentheses $(\,)$ to denote this type of blocks. For example, if $\lambda=(21^2,19^3,17^5,13^2,7^8,5^2,1^6)$, then
\[\lambda=(21^2,19^3)(17^5,15^0)(13^2,11^0)(7^8,5^2)(1^6).\]
Let $e_{1,4}(\lambda)$ and  $e_{3,4}(\lambda)$ be the number of distinct parts in $\lambda$ which occur an even number of times (including zero times) and are congruent to $1$ and $3$ modulo $4$, respectively. Continuing the above $\lambda$, we have $e_{1,4}(\lambda)=4$ (counting $21,13,5,1$), and $e_{3,4}(\lambda)=3$ (counting $15,11,7$). It is clear that
\[o_{1,4}(\lambda)+o_{3,4}(\lambda)+e_{1,4}(\lambda)+e_{3,4}(\lambda)=b_o(\lambda).\]

To perform $\varphi^{-1}$, we need to group the parts from smallest to largest and take into account the parity of the weight added. We know from the definition of $\lambda\stackrel{+w}{\longrightarrow}$ that we will get a block of weight $1$ only when $w$ is odd. Thus, we think of each block as having two distinct integers for all other cases. We use square brackets $[\,\,]$ to denote this new type of blocks. For instance, take $\lambda=(21^3,15^5,9^4,5^2,3^4,1^6)$, where $b_o(\lambda)=9$. If $w=9$, then
\[\lambda=[23^0,21^3][17^0,15^5][11^0,9^4][5^2,3^4][1^6].\]
If $w=10$, then
\[\lambda=[23^0,21^3][17^0,15^5][11^0,9^4][7^0,5^2][3^4,1^6].\]
When we perform $\lambda\stackrel{+w}{\longrightarrow}$, let $w_{1,4}(\lambda)$ and $w_{3,4}(\lambda)$ be the number of distinct parts in $\lambda$ which occur an even number of times (including zero times) and are congruent to $1$ and $3$ modulo $4$, respectively. Continue the above example. If $w=9$, then $w_{1,4}(\lambda)=4$ (counting $17,9,5,1$) and $w_{3,4}(\lambda)=3$ (counting $23,11,3$). If $w=10$, then $w_{1,4}(\lambda)=4$ (counting $17,9,5,1$) and $w_{3,4}(\lambda)=4$ (counting $23,11,7,3$).

For a distinct partition $\pi$, let $n_{e,o}(\pi)$ and $n_{e,e}(\pi)$ be the number of odd and even indexed even parts of $\pi$, respectively.

We next show that
\begin{align}\label{eqon}
o_{1,4}(\lambda)=n_{o,o}(\pi),
o_{3,4}(\lambda)=n_{o,e}(\pi),
e_{1,4}(\lambda)=n_{e,o}(\pi),
e_{3,4}(\lambda)=n_{e,e}(\pi).
\end{align}

Recall that both $\lambda$ and $\pi$ are determined uniquely by the gap-free sequence $\mathbf{s}$. In essence, we construct $\lambda$ iteratively by performing the weight-adding operation whereas we obtain the Ferrers graph of $\pi$ by adding one column each time from right to left. To perform $\varphi^{-1}$, it is convenient to write $\mathbf{s}$ in the weakly increasing order of its entries. For
$k\geq1$, let $\mathbf{s}_k$ denote the prefix of $\mathbf{s}$ of length $k$, i.e., $\mathbf{s}_k=(s_1,s_2,\ldots,s_k)$ if $\mathbf{s}=(s_1,s_2,\ldots,s_n)$. Clearly, $\mathbf{s}_k$ is also a gap-free sequence, and determines an odd partition $\Lambda_k$ and a distinct partition $\Pi_k$ such that $b_o(\Lambda_k)=\ell(\Pi_k)=s_k$. We now proceed by induction on $k$ to prove that
\begin{align}\label{eqinduc}
o_{1,4}(\Lambda_k)=n_{o,o}(\Pi_k), o_{3,4}(\Lambda_k)=n_{o,e}(\Pi_k), e_{1,4}(\Lambda_k)=n_{e,o}(\Pi_k), e_{3,4}(\Lambda_k)=n_{e,e}(\Pi_k).
\end{align}
The case for $k=1$ is trivial since $s_1=1$ and $\Lambda_1=(1)=\Pi_1$. Assume that the assertion is true for $k-1$. Performing the operation $\Lambda_{k-1}\stackrel{+s_k}{\longrightarrow}$, we get $\Lambda_k$. Adding a new column of length $s_k$ to the Ferrers graph of $\Pi_{k-1}$, we obtain the Ferrers graph of $\Pi_k$.

Each block produced by parentheses-grouping the parts of $\Lambda_k$ has the same integers as that block produced by square-brackets-grouping the parts of $\Lambda_{k-1}$ when we add the weight $s_k$. Thus, we always have
\begin{align}
&\hbox{$o_{1,4}(\Lambda_k)=w_{1,4}(\Lambda_{k-1})$ and $o_{3,4}(\Lambda_k)=w_{3,4}(\Lambda_{k-1})$},\label{eqow}\\
&\hbox{$e_{1,4}(\Lambda_k)=o_{1,4}(\Lambda_{k-1})$ and $e_{3,4}(\Lambda_k)=o_{3,4}(\Lambda_{k-1})$}.\label{eqeo}
\end{align}

For an odd partition $\lambda$, a chain of $\lambda$ is a maximal sequence of parts consisting of consecutive odd integers. The number of distinct integers appearing in a chain is called the length of the chain. For example, there are $4$ chains in $(25^3,23^2,21^5,17^7,15^4,11^9,7^2,5,3^8,1^6)$, which are $(25^3,23^2,21^5)$, $(17^7,15^4)$, $(11^9)$ and $(7^2,5,3^8,1^6)$, respectively. For a chain of even length, we group its elements pairwise into blocks from largest to smallest is the same as we group from smallest to largest. For instance,
\[(13,11^3,9^2,7^5,5^4,3^6)=(13,11^3)(9^2,7^5)(5^4,3^6)=[13,11^3][9^2,7^5][5^4,3^6].\]
Therefore, we have $e_{1,4}(\lambda)=w_{1,4}(\lambda)$ and $e_{3,4}(\lambda)=w_{3,4}(\lambda)$ where $\lambda$
is a chain of even length, and also regarded as an odd partition. However, the two ways of grouping parts for a chain
of odd length are different. For example, if $\lambda=(17^5,15^2,13,11^3,9^8,7^4,5^6)$, we get \[(17^5,15^2)(13,11^3)(9^8,7^4)(5^6,3^0)\]
if we group from largest to smallest while we get
\[[19^0,17^5][15^2,13][11^3,9^8][7^4,5^6]\]
if we group from smallest to largest. But we still have $e_{1,4}(\lambda)=w_{1,4}(\lambda)$ and $e_{3,4}(\lambda)=w_{3,4}(\lambda)$ for a chain of odd length since the last integer (added when grouping from largest to smallest) is congruent to the first integer (added when grouping from smallest to largest) modulo $4$.

Recall that $s_k=b_o(\Lambda_{k-1})$ or $s_k=b_o(\Lambda_{k-1})+1$. So we have two cases to discuss.

If $s_k=b_o(\Lambda_{k-1})$, then we have $e_{1,4}(\Gamma)=w_{1,4}(\Gamma)$ and $e_{3,4}(\Gamma)=w_{3,4}(\Gamma)$ for each chain $\Gamma$ of $\Lambda_{k-1}$. Therefore, $e_{1,4}(\Lambda_{k-1})=w_{1,4}(\Lambda_{k-1})$ and $e_{3,4}(\Lambda_{k-1})=w_{3,4}(\Lambda_{k-1})$. Combining with \eqref{eqow}, we obtain
$o_{1,4}(\Lambda_k)=e_{1,4}(\Lambda_{k-1})$ and $o_{3,4}(\Lambda_k)=e_{3,4}(\Lambda_{k-1})$. We now arrive at
\begin{align*}
\hbox{$o_{1,4}(\Lambda_k)=e_{1,4}(\Lambda_{k-1})$ and $o_{3,4}(\Lambda_k)=e_{3,4}(\Lambda_{k-1})$},\\
\hbox{$e_{1,4}(\Lambda_k)=o_{1,4}(\Lambda_{k-1})$ and $e_{3,4}(\Lambda_k)=o_{3,4}(\Lambda_{k-1})$}.
\end{align*}
On the other hand, since $s_k=b_o(\Lambda_{k-1})=\ell(\Pi_{k-1})$ and $\ell(\Pi_k)=s_k$, implying $\ell(\Pi_k)=\ell(\Pi_{k-1})$, we obtain that
\begin{align*}
\hbox{$n_{o,o}(\Pi_k)=n_{e,o}(\Pi_{k-1})$ and $n_{o,e}(\Pi_k)=n_{e,e}(\Pi_{k-1})$},\\
\hbox{$n_{e,o}(\Pi_k)=n_{o,o}(\Pi_{k-1})$ and $n_{e,e}(\Pi_k)=n_{o,e}(\Pi_{k-1})$}.
\end{align*}
By the induction hypothesis, we know that
\begin{align*}
\hbox{$o_{1,4}(\Lambda_{k-1})=n_{o,o}(\Pi_{k-1})$ and $o_{3,4}(\Lambda_{k-1})=n_{o,e}(\Pi_{k-1})$},\\
\hbox{$e_{1,4}(\Lambda_{k-1})=n_{e,o}(\Pi_{k-1})$ and $e_{3,4}(\Lambda_{k-1})=n_{e,e}(\Pi_{k-1})$}.
\end{align*}
Combining the above results together, we prove \eqref{eqinduc} for the case $s_k=b_o(\Lambda_{k-1})$.

If $s_k=b_o(\Lambda_{k-1})+1$, then $e_{1,4}(\Gamma)=w_{1,4}(\Gamma)$ and $e_{3,4}(\Gamma)=w_{3,4}(\Gamma)$ for each chain $\Gamma$ of $\Lambda_{k-1}$ except the chain containing the parts of size $1$. We now assume that $\Gamma$ is the chain where $1$'s lie in. There are two subcases to be considered according to the parity of $s_k$. If $s_k$ is even, then $b_o(\Lambda_{k-1})$ is odd, and the length of $\Gamma$ is odd. Thus, $w_{1,4}(\Gamma)=e_{1,4}(\Gamma)$ and $w_{3,4}(\Gamma)=e_{3,4}(\Gamma)+1$. For example, let $\Gamma=(9^4,7,5^3,3^8,1^6)$. To compute $e_{1,4}(\Gamma)$ and $e_{3,4}(\Gamma)$, $\Gamma$ is grouped as
$(9^4,7)(5^3,3^8)(1^6)$. So $e_{1,4}(\Gamma)=2$ and $e_{3,4}(\Gamma)=1$. To compute $w_{1,4}(\Gamma)$ and $w_{3,4}(\Gamma)$, we group $\Gamma$ as $[11^0,9^4][7,5^3][3^8,1^6]$, and obtain $w_{1,4}(\Gamma)=2$ and $w_{3,4}(\Gamma)=2$. We now conclude that $w_{1,4}(\Lambda_{k-1})=e_{1,4}(\Lambda_{k-1})$ and $w_{3,4}(\Lambda_{k-1})=e_{3,4}(\Lambda_{k-1})+1$. If $s_k$ is odd, then $b_o(\Lambda_{k-1})$ is even, and the length of $\Gamma$ is even if $1$ occurs in $\Lambda_{k-1}$; otherwise $\Gamma$ is empty. Since the weight $s_k$ is odd, all $1$'s will inhabit a block by themselves. Hence, if $\Gamma$ is nonempty, then $w_{1,4}(\Gamma)=e_{1,4}(\Gamma)+1$ and $w_{3,4}(\Gamma)=e_{3,4}(\Gamma)$; if $\Gamma$ is empty, then
$w_{1,4}(\Gamma)=1$, $e_{1,4}(\Gamma)=0$, and $w_{3,4}(\Gamma)=e_{3,4}(\Gamma)=0$, still satisfying
$w_{1,4}(\Gamma)=e_{1,4}(\Gamma)+1$ and $w_{3,4}(\Gamma)=e_{3,4}(\Gamma)$. We now conclude that $w_{1,4}(\Lambda_{k-1})=e_{1,4}(\Lambda_{k-1})+1$ and $w_{3,4}(\Lambda_{k-1})=e_{3,4}(\Lambda_{k-1})$. Combining with \eqref{eqow}, we obtain
\begin{align*}
o_{1,4}(\Lambda_k)&=w_{1,4}(\Lambda_{k-1})=
\left\{
\begin{array}{ll}
e_{1,4}(\Lambda_{k-1}),   & \hbox{if $s_k$ is even;} \\
e_{1,4}(\Lambda_{k-1})+1, & \hbox{if $s_k$ is odd,}
\end{array}
\right.\\
o_{3,4}(\Lambda_k)&=w_{3,4}(\Lambda_{k-1})=
\left\{
\begin{array}{ll}
e_{3,4}(\Lambda_{k-1})+1,   & \hbox{if $s_k$ is even;} \\
e_{3,4}(\Lambda_{k-1}), & \hbox{if $s_k$ is odd.}
\end{array}
\right.
\end{align*}
On the other hand, since $s_k=\ell(\Pi_{k-1})+1$, implying $\ell(\Pi_k)=\ell(\Pi_{k-1})+1$, we obtain that
\begin{align*}
n_{o,o}(\Pi_k)&=
\left\{
\begin{array}{ll}
n_{e,o}(\Pi_{k-1}),   & \hbox{if $s_k$ is even;} \\
n_{e,o}(\Pi_{k-1})+1, & \hbox{if $s_k$ is odd,}
\end{array}
\right.\\
n_{o,e}(\Pi_k)&=
\left\{
\begin{array}{ll}
n_{e,e}(\Pi_{k-1})+1, & \hbox{if $s_k$ is even;} \\
n_{e,e}(\Pi_{k-1}),   & \hbox{if $s_k$ is odd,}
\end{array}
\right.\\
n_{e,o}(\Pi_k)&=n_{o,o}(\Pi_{k-1}),\\
n_{e,e}(\Pi_k)&=n_{o,e}(\Pi_{k-1}).
\end{align*}
Combining all the above results with the induction hypothesis and \eqref{eqeo} together, we finish the proof of \eqref{eqinduc} for the case $s_k=b_o(\Lambda_{k-1})+1$.

Because $\lambda=\Lambda_{n}$ and $\pi=\Pi_n$, we conclude that \eqref{eqon} is true.

The above induction process also shows that $b_o(\Lambda_k)=\ell(\Pi_k)$ for each $k$. Therefore, we have $b_o(\lambda)=\ell(\pi)$. In addition, it follows immediately from the visual description of $\varphi$ that $b_o(\lambda)=\ell(\pi)$.

\section{Lecture hall length}\label{seclh}

We now clearly know that if $\lambda\in\mathcal{O}$ and $\pi=\varphi(\lambda)$, then $\lambda$ and $\pi$ are essentially determined by the gap-free sequence $\mathbf{s}$. Furthermore, if we regard $\mathbf{s}$ as a partition, then $\mathbf{s}$ is the conjugate of $\pi$. For convenience, we write $\mathbf{s}$ in the compact form
\[\mathbf{s}=(1^{m_1},2^{m_2},\ldots,k^{m_k}),\]
where each term $m_i$ is positive and denotes the number of repetitions of $i$ in $\mathbf{s}$. Suppose that $\pi=(\pi_1,\pi_2,\ldots,\pi_k)$. Then $m_1=\pi_1-\pi_2$, $m_2=\pi_2-\pi_3,\ldots,m_{k-1}=\pi_{k-1}-\pi_k$ and $m_k=\pi_k$. Conversely, if $\mathbf{s}$ is given, then
\begin{align*}
\pi_1&=m_1+m_2+\cdots+m_k,\\
\pi_2&=m_2+m_3+\cdots+m_k,\\
&\,\,\,\vdots\\
\pi_k&=m_k.
\end{align*}

In the remainder of this section, we will discuss the properties of a distinct partition as a lecture hall partition. For simplicity, we always assume that $\pi$ is a distinct partition of length $k$, i.e., $\pi=(\pi_1,\pi_2,\ldots,\pi_k)$, and $\pi_i=0$ if $i>k$. We use $\mathbf{s}=(1^{m_1},2^{m_2},\ldots,k^{m_k})$ to denote the conjugate of $\pi$. So we have $m_i=\pi_i-\pi_{i+1}$ for $1\leq i\leq k$. We will frequently use these notation without mentioning them explicitly.

\begin{lemma}\label{lemlh}
If
\begin{align*}
t\geq\max\limits_{1\leq i\leq k}\left\{\frac{\pi_i}{\pi_i-\pi_{i+1}}+i-1\right\},
\end{align*}
then for $1\leq i\leq k$, we have
\begin{align*}
\frac{\pi_{i}}{t-i+1}\geq\frac{\pi_{i+1}}{t-i}.
\end{align*}
\end{lemma}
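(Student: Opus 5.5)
The plan is to fix an index $i$ with $1\le i\le k$ and reduce the claimed inequality to a single linear inequality in $t$. The hypothesis then supplies exactly that inequality through the $i$-th term of the maximum.

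First we check that the denominators are positive, so that we may cross-multiply. Since $\pi$ is a distinct partition, $\pi_i-\pi_{i+1}=m_i\ge 1$ for every $1\le i\le k$; this includes $i=k$, where $\pi_{k+1}=0$ and $m_k=\pi_k\ge1$. Hence each term $\frac{\pi_i}{\pi_i-\pi_{i+1}}+i-1$ is well defined. It is also at least $1+i-1=i$, because $\pi_i\ge\pi_i-\pi_{i+1}$. So $t\ge i$, and therefore $t-i+1\ge1>0$.

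The case $t=i$ needs separate handling, since then $t-i=0$. In that case the hypothesis forces $\frac{\pi_i}{\pi_i-\pi_{i+1}}\le 1$, which gives $\pi_{i+1}=0$. We then read the right-hand side $\frac{\pi_{i+1}}{t-i}$ as $\frac{0}{0}$, interpreted as $0$, in keeping with the lecture hall convention that appended zero parts contribute $0$. The inequality is then trivial. Alternatively, the case disappears when $i=k$, and for $i<k$ we have $\pi_{i+1}>0$, so the strict bound $t>i$ holds automatically. Indeed, $\frac{\pi_i}{\pi_i-\pi_{i+1}}>1$ in that case.

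In the main case $t>i$, both denominators are positive, and the desired inequality is equivalent to
\[
\pi_i(t-i)\ge \pi_{i+1}(t-i+1),
\]
that is,
\[
(\pi_i-\pi_{i+1})(t-i+1)\ge \pi_i .
\]
Dividing by $\pi_i-\pi_{i+1}>0$, this becomes
\[
t\ge \frac{\pi_i}{\pi_i-\pi_{i+1}}+i-1,
\]
which is precisely the $i$-th term bounded by the assumed maximum. This completes the argument.

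The only real obstacle is bookkeeping at the boundary: justifying positivity of the denominators, and treating the degenerate situation $t-i=0$ when $i=k$ in the same way the paper treats the appended zeros. The algebraic core is a one-line rearrangement.
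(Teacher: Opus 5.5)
Your proof is correct and follows the paper's route: after cross-multiplying, both arguments reduce the claim to $(t-i)\pi_i\geq(t-i+1)\pi_{i+1}$, which is a rearrangement of $t\geq \pi_i/(\pi_i-\pi_{i+1})+i-1$. Your additional checks (positive denominators, and the degenerate case $t=i$) are care the paper's proof leaves out rather than a different method; one wording fix is needed, since by your own reasoning that case can only arise when $i=k$, so ``the case disappears when $i=k$'' should read ``the case can only arise when $i=k$''.
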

\begin{proof}
The condition
\begin{align*}
t\geq\frac{\pi_i}{\pi_i-\pi_{i+1}}+i-1
\end{align*}
is equivalent to
\begin{align*}
t\pi_i-t\pi_{i+1}&\geq\pi_i+(i-1)\pi_i-(i-1)\pi_{i+1}\\
&=i\pi_i-(i-1)\pi_{i+1},
\end{align*}
which can be rewritten as $(t-i)\pi_i\geq(t-i+1)\pi_{i+1}$ or
\[\frac{\pi_{i}}{t-i+1}\geq\frac{\pi_{i+1}}{t-i}.\]
This completes the proof.
\end{proof}

From the proof of Lemma \ref{lemlh}, we can see that
\begin{align*}
t\geq\max\limits_{1\leq i\leq k}\left\{\frac{\pi_i}{\pi_i-\pi_{i+1}}+i-1\right\}
\end{align*}
is equivalent to
\[\frac{\pi_{i}}{t-i+1}\geq\frac{\pi_{i+1}}{t-i},\,\,\mbox{for $1\leq i\leq k$}.\]
This enables us to conclude a formula to compute the lecture hall length of $\pi$. Let $\lceil x\rceil$ denote the least integer greater than or equal to $x$.
\begin{theorem}\label{thmellh}
We have
\begin{align*}
\ell_h(\pi)=\max_{1\leq i\leq k}\left\{\left\lceil\frac{\pi_i}{\pi_i-\pi_{i+1}}\right\rceil+i-1\right\}.
\end{align*}
\end{theorem}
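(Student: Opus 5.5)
Our plan is to translate the lecture hall condition for $\pi$ padded with zeros to length $t$ into the inequalities of Lemma \ref{lemlh}, and then read off the least admissible $t$. Fix an integer $t\geq k$ and consider $(\pi_1,\ldots,\pi_k,0,\ldots,0)$ of length $t$. With the denominators $t,t-1,\ldots,1$, the lecture hall condition becomes
\[\frac{\pi_i}{t-i+1}\geq\frac{\pi_{i+1}}{t-i}\quad\mbox{for }1\leq i\leq t-1,\]
together with $\pi_t/1\geq 0$. For $i\geq k+1$ both sides vanish, and the last inequality is trivial. So $t$ is admissible exactly when the displayed inequalities hold for $1\leq i\leq k$. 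When $i=k$ we need $t>k$, so that the right side $\pi_{k+1}/(t-k)=0$ is meaningful. For $t=k$ we instead need only $i\leq k-1$ together with $\pi_k\geq 0$. We will handle this boundary case separately below.

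By the remark after Lemma \ref{lemlh}, each inequality $(t-i)\pi_i\geq(t-i+1)\pi_{i+1}$ is equivalent to $t\geq \frac{\pi_i}{\pi_i-\pi_{i+1}}+i-1$. The division is legitimate because $\pi_i-\pi_{i+1}>0$, as $\pi$ has distinct parts. Since $t$ is an integer, this is in turn equivalent to $t\geq \lceil \pi_i/(\pi_i-\pi_{i+1})\rceil+i-1$. Hence the set of admissible $t$ is an upward-closed set of integers, which also recovers the monotonicity noted in the introduction. Its minimum is $M=\max_{1\leq i\leq k}\{\lceil \pi_i/(\pi_i-\pi_{i+1})\rceil+i-1\}$, provided we also check that $M\geq k$, so that padding to length $M$ makes sense.

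That check comes from the term $i=k$: since $\pi_{k+1}=0$, it equals $\lceil 1\rceil+k-1=k$. So $M\geq k$ automatically. It remains to reconcile the boundary $t=k$. For $t=k$ the $i=k$ inequality in cross-multiplied form reads $0\cdot\pi_k\geq 1\cdot 0$, which holds. This is consistent with the $i=k$ term contributing exactly $k$, so using the cross-multiplied form uniformly covers this case.

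The main subtlety is therefore only bookkeeping. We must make sure the equivalence in Lemma \ref{lemlh} is applied in the cross-multiplied form, valid whenever $t\geq i$, so that no division by $t-i=0$ occurs. We must also pass correctly from the real bound to the integer ceiling. Having done so, $\ell_h(\pi)$, defined as the least admissible length, equals $M$.
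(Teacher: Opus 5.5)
Your proposal is correct and follows essentially the same route as the paper, which obtains the formula directly from the equivalence in the proof of Lemma~\ref{lemlh} (the inequality $\pi_i/(t-i+1)\geq\pi_{i+1}/(t-i)$ rewritten as $t\geq \pi_i/(\pi_i-\pi_{i+1})+i-1$), combined with the integrality of $t$. Your version also spells out the boundary bookkeeping that the paper leaves implicit: the case $t=k$, where the $i=k$ inequality would involve division by $t-k=0$, and the fact that the $i=k$ term equals $k$ and so forces the maximum to be at least $k$.
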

\begin{example}
Letting $\pi=(7,5,4,1)$, then
\[\left\lceil\frac{7}{7-5}\right\rceil=4,\left\lceil\frac{5}{5-4}\right\rceil+1=6,\left\lceil\frac{4}{4-1}\right\rceil+2=4,
\left\lceil\frac{1}{1-0}\right\rceil+3=4.\]
Thus, $\ell_h(\pi)=6$. One can check that
\begin{align*}
&\frac{7}{4}\geq\frac{5}{3}\not\geq\frac{4}{2}\geq\frac{1}{1},\\[3pt]
&\frac{7}{5}\geq\frac{5}{4}\not\geq\frac{4}{3}\geq\frac{1}{2}\geq\frac{0}{1},\\[3pt]
&\frac{7}{6}\geq\frac{5}{5}\geq\frac{4}{4}\geq\frac{1}{3}\geq\frac{0}{2}\geq\frac{0}{1}.
\end{align*}
\end{example}

\begin{theorem}\label{thms}
Assume that $\pi=(\pi_1,\pi_2,\ldots,\pi_k)\in\mathcal{D}$ and $\pi'=(1^{m_1},2^{m_2},\ldots,k^{m_k})$. If
\begin{align*}
\frac{\pi_j}{m_j}+j-1=\max\limits_{1\leq i\leq k}\left\{\frac{\pi_i}{m_i}+i-1\right\},
\end{align*}
then $\pi_i\geq\pi_j+(j-i)m_j$ for $i\leq j$, and $\pi_j+(j-i)m_j\geq\pi_i$ for $i>j$.
\end{theorem}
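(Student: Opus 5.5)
Write $c=\frac{\pi_j}{m_j}+j-1$ for the common maximum, so that $\pi_j=(c-j+1)m_j$. The plan is to reduce both inequalities to the single-step inequalities furnished by Lemma \ref{lemlh}, applied with $t=c$. The hypothesis says exactly that $c\geq\frac{\pi_i}{m_i}+i-1$ for every $1\leq i\leq k$. By the equivalence proved there (it does not use integrality of $t$), we get
\[(c-i)\pi_i\geq(c-i+1)\pi_{i+1},\qquad 1\leq i\leq k.\]
Equivalently, $c\,m_i\geq \pi_i+(i-1)m_i$, that is, $m_i\geq \frac{\pi_i}{c-i+1}$, whenever $c-i+1>0$.

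Note first that $c-i+1>0$ for all $i\le k$. Indeed, $c-i+1\geq\frac{\pi_i}{m_i}>0$ because $\pi_i\ge m_i\ge 1$. Together with $\pi_j=(c-j+1)m_j$, the inequality $m_i\geq\frac{\pi_i}{c-i+1}$ says that the points $(i,\pi_i)$ lie on or below a convex-type envelope. The cleanest route is to track the ratio $r_i=\frac{\pi_i}{c-i+1}$. The displayed inequality reads $\frac{\pi_i}{c-i+1}\geq\frac{\pi_{i+1}}{c-i}$ for $i<k$ when $c-i>0$, so $r_i$ is weakly decreasing in $i$. Moreover $r_j=m_j$.

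For $i\le j$, the claim $\pi_i\geq\pi_j+(j-i)m_j=(c-i+1)m_j$ is just $r_i\geq r_j$, which follows from monotonicity. For $i>j$ (with $i\le k$), we need $\pi_i\le(c-i+1)m_j$, i.e.\ $r_i\le r_j$, which again follows from monotonicity. For $i>k$ we have $\pi_i=0$, so the claim is $(c-i+1)m_j\geq 0$. This holds when $i\le c+1$, but it may fail for larger $i$. I would therefore restrict the statement to $i\le k$, or to the range where it is meaningful, and check that $c\ge k$ holds anyway via $i=k$: $\frac{\pi_k}{m_k}=1$ gives $c\geq k$.

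The main care point is the step $r_i\ge r_{i+1}$. It needs $c-i>0$ for $i<k$, which holds since $c\geq k>i$. This is precisely Lemma \ref{lemlh} read in its equivalent form, so the whole proof is a telescoping of that lemma.
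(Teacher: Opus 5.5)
Your proposal is correct and is essentially the paper's argument: apply Lemma~\ref{lemlh} with $t=\pi_j/m_j+j-1$, chain the consecutive inequalities into monotonicity of $\pi_i/(t-i+1)$, and compare with its value $m_j$ at $i=j$. Your additional checks are all correct and fill in what the paper leaves implicit: you verify $t\ge k$ so every denominator is positive, you handle the $i>j$ case the paper omits, and you note that the claim must be read for $i\le k$, since it fails for large $i$ under the convention $\pi_i=0$.
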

\begin{proof}
For $i\leq j$, it follows from Lemma \ref{lemlh} (setting $t=\pi_j/m_j+j-1$) that
\begin{align*}
\frac{\pi_i}{(\pi_j/m_j+j-1)-i+1}\geq\frac{\pi_j}{(\pi_j/m_j+j-1)-j+1}.
\end{align*}
After simplification, we obtain
\begin{align*}
\frac{\pi_i}{\pi_j/m_j+j-i}\geq m_j,
\end{align*}
which is equivalent to $\pi_i\geq\pi_j+(j-i)m_j$.

The proof for the case $i>j$ is similar and is omitted here.
\end{proof}

\begin{remark}
If we change the condition in Theorem \ref{thms} to
\begin{align*}
\left\lceil\frac{\pi_j}{m_j}\right\rceil+j-1=\max\limits_{1\leq i\leq k}\left\{\left\lceil\frac{\pi_i}{m_i}\right\rceil+i-1\right\},
\end{align*}
then the result fails to hold. For example, if $\pi=(40,31,21,12,3)$, we have
\begin{align*}
\left\lceil\frac{40}{9}\right\rceil=\left\lceil\frac{31}{10}\right\rceil+1
=\left\lceil\frac{21}{9}\right\rceil+2=\left\lceil\frac{12}{9}\right\rceil+3
=\left\lceil\frac{3}{3}\right\rceil+4=5.
\end{align*}
Taking $\pi_j=31$, we find that $40\not\geq31+10$, $31-20\not\geq12$ and $31-30\not\geq3$.
\end{remark}

Suppose that $\pi=(\pi_1,\pi_2,\ldots,\pi_k)\in\mathcal{D}$ and $\pi'=(1^{m_1},2^{m_2},\ldots,k^{m_k})$. We say that $j$ is the lecture hall position if it satisfies the following three conditions:
\begin{itemize}
\item[(a)] $\left\lceil\pi_j/m_j\right\rceil+j-1=\ell_h(\pi)$;
\item[(b)] Assume that $\left\lceil\pi_i/m_i\right\rceil+i-1=\left\lceil\pi_j/m_j\right\rceil+j-1=\ell_h(\pi)$.
           Letting $r_i=\pi_i-(\ell_h(\pi)-i)m_i$ and $r_j=\pi_j-(\ell_h(\pi)-j)m_j$, then $r_j\geq r_i$;
\item[(c)] $j$ is the smallest integer satisfying both conditions (a) and (b).
\end{itemize}
\begin{remark}
Note that if $\left\lceil\pi_j/m_j\right\rceil+j-1=\ell_h(\pi)$, then there is a unique way to write \[\pi_j=(\ell_h(\pi)-j)m_j+r_j\]
where $1\leq r_j\leq m_j$ since $\ell_h(\pi)-j<\pi_j/m_j\leq \ell_h(\pi)-j+1$.
\end{remark}

Sometimes we will say that the part $\pi_j$  or the weight $j$ in $\mathbf{s}$ ($=\pi'$) determines the lecture hall position. We will use the above statements interchangeably.

\begin{example}
Letting $\pi=(74,65,40,34,24,19)$, then $\mathbf{s}=\pi'=(1^{9},2^{25},3^6,4^{10},5^5,6^{19})$. We first compute the lecture hall length as follows
\begin{align*}
\left\lceil{\pi_1/m_1}\right\rceil+0&=\left\lceil{74/9}\right\rceil+0=9,\\
\left\lceil{\pi_2/m_2}\right\rceil+1&=\left\lceil{65/25}\right\rceil+1=4,\\
\left\lceil{\pi_3/m_3}\right\rceil+2&=\left\lceil{40/6}\right\rceil+2=9,\\
\left\lceil{\pi_4/m_4}\right\rceil+3&=\left\lceil{34/10}\right\rceil+3=7,\\
\left\lceil{\pi_5/m_5}\right\rceil+4&=\left\lceil{24/5}\right\rceil+4=9,\\
\left\lceil{\pi_6/m_6}\right\rceil+5&=\left\lceil{19/19}\right\rceil+5=6.
\end{align*}
We now see that $\ell_h(\pi)=9$, so the lecture hall position is in the set $\{1,3,5\}$. Because
\begin{align*}
\pi_1&=74=8\cdot9+2,\\
\pi_3&=40=6\cdot6+4,\\
\pi_5&=24=4\cdot5+4,
\end{align*}
we conclude that the part $\pi_3$ determines the lecture hall position.
\end{example}

\begin{theorem}
Assume that $\pi=(\pi_1,\pi_2,\ldots,\pi_k)\in\mathcal{D}$ and $\pi'=(1^{m_1},2^{m_2},\ldots,k^{m_k})$. Then the lecture hall position cannot be $k$.
\end{theorem}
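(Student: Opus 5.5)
The idea is to compare the index $k$ with the index $k-1$. Whenever $k$ satisfies conditions (a) and (b), the index $k-1$ will turn out to satisfy them as well, so condition (c) rules out $k$. This needs $k\geq 2$. For $k=1$, i.e. $\pi=(\pi_1)$, the only candidate index is $1=k$, so the statement should be read for $k\geq2$; I would state this explicitly.

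\emph{Step 1: $k$ can satisfy (a) only if $\ell_h(\pi)=k$.} Since $\pi_{k+1}=0$, we have $m_k=\pi_k$. Hence
\[
\left\lceil \pi_k/m_k\right\rceil+k-1=k,
\]
and also $r_k=\pi_k-(\ell_h(\pi)-k)m_k$. Theorem \ref{thmellh} shows that $\ell_h(\pi)$ is a maximum over terms including this one, so $\ell_h(\pi)\geq k$. Therefore, if $k$ satisfies condition (a), then $\ell_h(\pi)=k$, and in that case $r_k=\pi_k=m_k$.

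\emph{Step 2: the index $k-1$ also attains the maximum.} Assume $\ell_h(\pi)=k$. We have $\pi_{k-1}=m_{k-1}+m_k$, so the $(k-1)$-st term in Theorem \ref{thmellh} is
\[
\left\lceil 1+\frac{m_k}{m_{k-1}}\right\rceil+k-2 .
\]
This term is at most $\ell_h(\pi)=k$, which forces $\lceil m_k/m_{k-1}\rceil\leq 1$. Since $m_k\geq1$, the ceiling is exactly $1$, so $m_k\leq m_{k-1}$ and the $(k-1)$-st term equals $k$. Thus $k-1$ satisfies condition (a).

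\emph{Step 3: the two indices tie in condition (b).} We compute
\[
r_{k-1}=\pi_{k-1}-(k-(k-1))m_{k-1}=m_k=r_k .
\]
Condition (b) asks that $r_j$ be maximal among the $r_i$ of all indices attaining $\ell_h(\pi)$. So if $k$ satisfies (b), then $k-1$ does too, because it has the same $r$-value. Since $k-1<k$, condition (c) excludes $k$, and the lecture hall position cannot be $k$.

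No step is a real obstacle; the argument is short. The only delicate point is the boundary behaviour: $\pi_{k+1}=0$ is what gives $m_k=\pi_k$, and that identity drives both Step 1 and the equality $r_{k-1}=r_k$ in Step 3. The other thing to handle explicitly is the degenerate case $k=1$ noted above.
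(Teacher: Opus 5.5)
Your proof is correct and follows the paper's argument. Both show that $\ell_h(\pi)=k$ forces $m_{k-1}\geq m_k$, so the index $k-1$ also attains $\ell_h(\pi)$ with $r_{k-1}=m_k=r_k$, and condition (c) then rules out $k$. The only difference is that you get $m_{k-1}\geq m_k$ from the maximum formula of Theorem \ref{thmellh}, whereas the paper uses the lecture hall inequality $\pi_{k-1}/2\geq\pi_k$; the two are equivalent. You also justify $\ell_h(\pi)=k$ explicitly, which the paper takes for granted.

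Your remark that the statement needs $k\geq 2$ is right. For $\pi=(\pi_1)$ the lecture hall position is $1=k$. The paper's proof silently assumes $k\geq 2$ by using $\pi_{k-1}$.
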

\begin{proof}
If $k$ is the lecture hall position, then $\ell_h(\pi)=k$. Thus,
\[\frac{\pi_1}{k}\geq\frac{\pi_{2}}{k-1}\geq\cdots\geq\frac{\pi_{k-1}}{2}\geq\frac{\pi_k}{1},\]
which implies that $\pi_{k-1}\geq2\pi_k$. Moreover, we have $m_{k-1}\geq m_k$ since $\pi_{k-1}=m_{k-1}+m_k$ and $\pi_k=m_k$.

On the other hand,
\begin{align*}
\left\lceil\frac{\pi_{k-1}}{m_{k-1}}\right\rceil+k-2=\left\lceil\frac{m_{k-1}+m_k}{m_{k-1}}\right\rceil+k-2=k.
\end{align*}
We now have
\begin{align*}
\left\lceil\pi_{k-1}/m_{k-1}\right\rceil+k-2&=\ell_h(\pi),\\
\left\lceil\pi_k/m_k\right\rceil+k-1&=\ell_h(\pi),\\
\pi_{k-1}&=1\cdot m_{k-1}+m_k,\\
\pi_k&=0\cdot m_k+m_k.
\end{align*}
This means that the lecture hall position should be $k-1$ rather than $k$, a contradiction.
\end{proof}

\begin{theorem}\label{thmpm}
Given $\pi=(\pi_1,\pi_2,\ldots,\pi_k)\in\mathcal{D}$ and $\pi'=(1^{m_1},2^{m_2},\ldots,k^{m_k})$, if the weight $j$ determines the lecture hall position, then $m_j<\min\{m_1,m_2,\ldots,m_{j-1}\}$. Moreover, we have $m_j\leq m_{j+1}$ provided that $\ell_h(\pi)\neq k$.
\end{theorem}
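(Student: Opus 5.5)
Throughout, write $L=\ell_h(\pi)$ and, for the lecture hall position $j$, $\pi_j=(L-j)m_j+r_j$ with $1\le r_j\le m_j$, as in the Remark preceding the definition. The plan is to use only two facts. First, by Theorem \ref{thmellh}, every index $i$ satisfies $\lceil\pi_i/m_i\rceil+i-1\le L$, which is equivalent to $\pi_i\le (L-i+1)m_i$. Second, since $\pi'=(1^{m_1},\ldots,k^{m_k})$, we have $\pi_i=\pi_j+\sum_{t=i}^{j-1}m_t$ for $i<j$ and $\pi_{j+1}=\pi_j-m_j$. Each inequality will be proved by contradiction. Assuming the inequality fails for some index, I will show that this index either violates $\pi_i\le(L-i+1)m_i$, or attains $L$ in a way that conflicts with conditions (b) and (c) of the definition of the lecture hall position.

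\emph{First inequality.} Suppose some $i<j$ has $m_i\le m_j$, and take the largest such $i$. Then $m_t\ge m_j+1$ for $i<t<j$, so
\[
\pi_i\ge (L-j)m_j+r_j+m_i+(j-i-1)(m_j+1).
\]
Writing $(L-i+1)m_i=(L-j)m_i+(j-i-1)m_i+2m_i$ gives
\[
\pi_i-(L-i+1)m_i\ge (L-j)(m_j-m_i)+(j-i-1)(m_j+1-m_i)+r_j-m_i .
\]
I will split into the cases $m_i=m_j$ and $m_i<m_j$.

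In the case $m_i=m_j$, the bound reads $(j-i-1)+r_j-m_j$. The target is to show that $\pi_i\ge(L-i)m_i+r_j$. This forces $\lceil \pi_i/m_i\rceil+i-1=L$, so $i$ attains $L$, and it gives $r_i=\pi_i-(L-i)m_i\ge r_j$. Since $i<j$, this contradicts (b) together with the minimality in (c). Strict excess would instead contradict Theorem \ref{thmellh} outright.

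In the case $m_i<m_j$, I will use $L-j\ge 1$ and $m_j-m_i\ge1$ to absorb the negative term $r_j-m_i$. The fact $L-j\ge1$ holds because $j\ne k$ by the preceding theorem, and because $j<k$ forces $\pi_j>m_j$, hence $\lceil\pi_j/m_j\rceil\ge 2$. If the crude bound is not enough, the fallback is to use the sharper form of Theorem \ref{thms}, namely $\pi_i\ge\pi_j+(j-i)m_j$, in the range where the real maximum and the ceiling maximum coincide.

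\emph{Second inequality.} Assume $L\ne k$ and suppose $m_{j+1}<m_j$. Note that $j+1\le k$ by the preceding theorem. Then
\[
\pi_{j+1}=\pi_j-m_j=(L-j-1)m_j+r_j \quad\text{and}\quad \pi_{j+1}\le (L-j)m_{j+1}\le (L-j)(m_j-1).
\]
Comparing these gives $r_j\le m_j-(L-j)$. I will then show that this pins $j+1$ to attain $L$ with $r_{j+1}\ge r_j$; here the hypothesis $L\ne k$ supplies the slack needed when $j+1=k$. Alternatively, I will show that the inequality already contradicts $\lceil \pi_j/m_j\rceil+j-1=L$. Either outcome is a contradiction.

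I expect the main obstacle to be the bookkeeping with the remainders $r_i$ and $r_j$ in the tie cases. The ceiling maximum does not control the sums as cleanly as the real maximum does, as the Remark after Theorem \ref{thms} shows. So the careful step is the borderline case where the displayed difference is exactly zero, and there the tie-breaking rules (b) and (c) must be invoked precisely.
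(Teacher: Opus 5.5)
Your strategy matches the paper's. In each part you exhibit a second index attaining $\ell_h(\pi)$, with remainder at least $r_j$ if the index is smaller than $j$, or strictly larger if it is larger, contradicting (b) or (c). However, one step in each part does not go through as written.

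\textbf{First part.} The subcase $m_i<m_j$ is left open, and the plan you sketch for it fails.
\begin{itemize}
\item Absorbing $r_j-m_i$ into $(L-j)(m_j-m_i)$ to force $\pi_i>(L-i+1)m_i$ cannot work in general. For $i=j-1$, $L=j+1$, $m_j=m_i+1$, $r_j=1$, your lower bound equals $2-m_i$.
\item The fallback through Theorem \ref{thms} needs $j$ to attain the real maximum of $\pi_i/m_i+i-1$, and the lecture hall position need not do this. For $\pi=(40,31,21,12,3)$ from the Remark after Theorem \ref{thms}, the lecture hall position is $1$, but the real maximum is attained only at $5$.
\end{itemize}
The case split is unnecessary. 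Your own displayed inequality is equivalent to
\[
\pi_i-(L-i)m_i\;\ge\;(L-j)(m_j-m_i)+(j-i-1)(m_j+1-m_i)+r_j\;\ge\; r_j ,
\]
which holds for every $m_i\le m_j$. So $i$ attains $L$ with $r_i\ge r_j$ and $i<j$, contradicting (b) or (c), exactly as in your case $m_i=m_j$. The same elementary bound also works if $i$ is chosen as the index of $\min\{m_1,\ldots,m_{j-1}\}$, using $\pi_i\ge\pi_j+(j-i)m_i$. It therefore makes the paper's appeal to Theorem \ref{thms} at this step unnecessary, along with that theorem's real-maximum hypothesis.

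\textbf{Second part.} The conclusion you aim for, $r_{j+1}\ge r_j$, is not a contradiction. A tie between $j$ and $j+1$ is resolved by (c) in favour of $j$. You need a strict inequality, and you never derive it. It comes from the identity
\[
\pi_{j+1}-(L-j-1)m_{j+1}=r_j+(L-j-1)(m_j-m_{j+1}).
\]
The left side is at least $r_j\ge 1$, so $j+1$ attains $L$, and the identity gives $r_{j+1}=r_j+(L-j-1)(m_j-m_{j+1})$. Since $\lceil\pi_k/m_k\rceil+k-1=k$, we always have $L\ge k\ge j+1$. The hypothesis $L\ne k$ then yields $L-j-1\ge 1$, hence $r_{j+1}>r_j$, contradicting (b). This is the paper's argument.

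The bound $r_j\le m_j-(L-j)$ that you derive plays no role. Your alternative route also fails: that bound is compatible with $1\le r_j\le m_j$ whenever $L-j<m_j$, so it does not contradict $\lceil\pi_j/m_j\rceil+j-1=L$.
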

\begin{proof}
First, we assume that $m_i=\min\{m_1,m_2,\ldots,m_{j-1}\}$ and
\begin{align*}
\pi_i&=(\ell_h(\pi)-i)m_i+r_i,\\
\pi_j&=(\ell_h(\pi)-j)m_j+r_j,\\
\pi_{j+1}&=(\ell_h(\pi)-j-1)m_{j+1}+r_{j+1}.
\end{align*}

If $m_j\geq m_i$, then it follows from Theorem \ref{thms} that
\begin{align*}
\pi_i&\geq\pi_j+(j-i)m_j\\
&=(\ell_h(\pi)-j)m_j+r_j+(j-i)m_j\\
&=(\ell_h(\pi)-i)m_j+r_j\\
&\geq(\ell_h(\pi)-i)m_i+r_j,
\end{align*}
which implies that $r_i\geq r_j$ and
\begin{align*}
{\pi_i}/{m_i}+i-1\geq(\ell_h(\pi)-i)+r_j/m_i+i-1=\ell_h(\pi)-1+r_j/m_i.
\end{align*}
So $\left\lceil{\pi_i}/{m_i}\right\rceil+i-1\geq\ell_h({\pi})$, hence $\left\lceil{\pi_i}/{m_i}\right\rceil+i-1=\ell_h({\pi})$. We now have
\[\left\lceil{\pi_i}/{m_i}\right\rceil+i-1=\left\lceil{\pi_j}/{m_j}\right\rceil+j-1=\ell_h({\pi})\]
and $r_i\geq r_j$, which implies that $\pi_j$ cannot be the lecture hall position, a contradiction.

When $\ell_h(\pi)\neq k$, we have $\ell_{h}(\pi)>k\geq j+1$, implying $\ell_h(\pi)-j-1>0$. If $m_j>m_{j+1}$, then we obtain
\begin{align*}
\pi_{j+1}&=\pi_j-m_{j}\\
&=(\ell_h(\pi)-j)m_j+r_j-m_j\\
&=(\ell_h(\pi)-j-1)m_j+r_j\\
&>(\ell_h(\pi)-j-1)m_{j+1}+r_j,
\end{align*}
which implies that $r_{j+1}>r_j$ and
\begin{align*}
{\pi_{j+1}}/{m_{j+1}}+j>(\ell_h(\pi)-j-1)+r_j/m_{j+1}+j=\ell_h(\pi)-1+r_j/m_{j+1}.
\end{align*}
Thus, $\left\lceil{\pi_{j+1}}/{m_{j+1}}\right\rceil+j\geq\ell_h(\pi)$, so $\left\lceil{\pi_{j+1}}/{m_{j+1}}\right\rceil+j=\ell_h(\pi)$.
Therefore, $j$ cannot be the lecture hall position since $r_{j+1}>r_j$, a contradiction.
\end{proof}
\begin{remark}\label{remmj}
When $\ell_h(\pi)=k$, there are two cases to consider. If $j<k-1$, we still have $\ell_h(\pi)-j-1>0$. Using the same arguments, we can derive that $m_j\leq m_{j+1}$. If $j=k-1$, then we must have $m_j\geq m_{j+1}$ otherwise $\ell_h(\pi)>k$. Combining with Theorem \ref{thmpm}, we can say that $m_j\leq m_{j+1}$ always holds whenever $j<k-1$.
\end{remark}

\section{Chains of odd partitions}\label{secchain}

Recall that a chain of an odd partition is a maximal sequence of parts consisting of consecutive odd integers, and the length of a chain is the number of distinct integers appearing in it. For a chain $\Gamma$, we still use $\ell(\Gamma)$ to denote its length. A chain can be clearly viewed as an odd partition. Assume that $\Gamma_1,\Gamma_2,\ldots,\Gamma_k$ (arranged in the decreasing order of their largest parts) are all chains of $\lambda$. Then the chain decomposition of $\lambda$ is
\[\lambda=\Gamma_1\cup\Gamma_2\cup\cdots\cup\Gamma_k,\]
where the union $\alpha\cup\beta$ denotes the partition consisting of all parts in $\alpha$ and $\beta$. For $i\geq1$, the smallest part of $\Gamma_i$ is at least $4$ plus the largest part of $\Gamma_{i+1}$. Therefore, we see that
\begin{align*}
b_o(\lambda)&=b_o(\Gamma_1)+b_o(\Gamma_2)+\cdots+b_o(\Gamma_k),\\
\tau(\lambda)&=\tau(\Gamma_1)\cup\tau(\Gamma_2)\cup\cdots\cup\tau(\Gamma_k).
\end{align*}
Note that $\tau(\Gamma_i)$ may not be a chain of $\tau(\lambda)$ any more since $\tau(\Gamma_i)$ may become the union of several shorter chains of $\tau(\lambda)$, or $\tau(\Gamma_i)$ form a chain of $\tau(\lambda)$ with other $\tau(\Gamma_{j})$ together.
\begin{example}
Letting $\lambda=(27^2,25^3,21^4,19^4,17,15^8,9,7^5,5,3^6,1^8)$, then it has three chains:
$\Gamma_1=(27^2,25^3)$, $\Gamma_2=(21^4,19^4,17,15^8)$ and $\Gamma_3=(9,7^5,5,3^6,1^8)$. And we have
$b_o(\lambda)=11$, $b_o(\Gamma_1)=2$, $b_o(\Gamma_2)=4$ and $b_o(\Gamma_3)=5$ satisfying $b_o(\lambda)=b_o(\Gamma_1)+b_o(\Gamma_2)+b_o(\Gamma_3)$. In addition, $\tau(\lambda)=(27,25^4,21^3,19^5,15^9,7^6,3^7,1^7)$ and
\begin{align*}
\tau(\Gamma_1)&=(27,25^4),\\
\tau(\Gamma_2)&=(21^3,19^5,15^9)=(21^3,19^5)\cup(15^9),\\
\tau(\Gamma_3)&=(7^6,3^7,1^7)=(7^6)\cup(3^7,1^7).
\end{align*}
We now see that $\tau(\lambda)$ has five chains.
\end{example}

Given a chain $\Gamma$, we group its parts into blocks, and define the multiplicity of $\Gamma$ to be the minimum value of the multiplicity of the maximal part of every block in $\Gamma$, denoted $\varepsilon(\Gamma)$ or just $\varepsilon$ when there is no possibility of confusion. For $1\leq i<\varepsilon$, $\tau^{i}(\Gamma)$ is still a chain. However, $\tau^{\varepsilon}(\Gamma)$ will be disconnected into several shorter chains. For instance, if \[\Gamma=(19^8,17^5,15^6,13^2,11^4,9,7^5,5^3,3^7)=(19^8,17^5)(15^6,13^2)(11^4,9)(7^5,5^3)(3^7),\]
then $\varepsilon=4=\min\{8,6,4,5,7\}$, and
\begin{align*}
\tau(\Gamma)&=(19^7,17^6)(15^5,13^3)(11^3,9^2)(7^4,5^4)(3^6,1),\\
\tau^2(\Gamma)&=(19^6,17^7)(15^4,13^4)(11^2,9^3)(7^3,5^5)(3^5,1^2),\\
\tau^3(\Gamma)&=(19^5,17^8)(15^3,13^5)(11,9^4)(7^2,5^6)(3^4,1^3),\\
\tau^4(\Gamma)&=(19^4,17^9)(15^2,13^6)\cup(9^5,7)(5^7,3^3)(1^4)\\
&=(19^4,17^9,15^2,13^6)\cup(9^5,7,5^7,3^3,1^4).
\end{align*}
For convenience, we often from now on represent a chain as the block form. Let $t$ be the first part of $\Gamma$ (reading from left to right) which is the maximal part of a block and occurs exactly $\varepsilon$ times in $\Gamma$, i.e.,
\[\Gamma=\cdots(t+4^{>\varepsilon},t+2^{>0})(t^{=\varepsilon},t-2^{>0})(t-4^{\geq\varepsilon},t-6^{>0})\cdots,\]
where the notation $t^{>r}$, $t^{\geq r}$ and $t^{=r}$ means that $t$ as a part of $\Gamma$ appears more than $r$ times, at least $r$ times and exactly $r$ times, respectively. Denote by $\Gamma_\varepsilon$ the subchain of $\Gamma$ consisting of the parts greater than $t$. Note that $\Gamma_\varepsilon$ may be empty. If $\Gamma_\varepsilon$ is nonempty, $\Gamma_\varepsilon$ will be separated from $\Gamma$ after performing the operation $\tau$ exactly $\varepsilon$ times. Furthermore, $\tau^{\varepsilon}(\Gamma_\varepsilon)$ is the first chain of $\tau^{\varepsilon}(\Gamma)$, and has even length. Continuing the previous example, we have $\Gamma_\varepsilon=(19^8,17^5)(15^6,13^2)$, and $\tau^4(\Gamma_\varepsilon)=(19^4,17^9)(15^2,13^6)$, which is the first chain of $\tau^4(\Gamma)$ and has even length. In fact, if $\Gamma$ is a chain without $1$'s, then each chain of $\tau^{\varepsilon}(\Gamma)$ except $\tau^{\varepsilon}(\Gamma_\varepsilon)$ has odd length.

A chain $\Gamma$ without $1$'s is called $\delta$-fixed if the multiplicity of each chain of odd length in $\tau^{\varepsilon}(\Gamma)$ is at least $\delta$, and $\Gamma_{\varepsilon}$ is also a $\delta$-fixed chain when $\Gamma_{\varepsilon}$ is nonempty.
\begin{example}\label{exafix}
We show that $\Gamma$ is a $6$-fixed chain, where
\begin{align*}
\Gamma=(37^7,35^2)(33^5,31)(29^8,27^5)(25^{11},23^4)(21^3,19^{12})(17^4,15^6)(13^4,11^5)(9^3,7^{10})(5^6,3^7).
\end{align*}
Firstly, we have $\varepsilon=3$ and $\Gamma_\varepsilon=(37^7,35^2)(33^5,31)(29^8,27^5)(25^{11},23^4)$, and
\begin{align*}
\tau^3(\Gamma)&=(37^4,35^5)(33^2,31^4)(29^5,27^8)(25^{8},23^7)(21^0,19^{15})(17,15^9)(13,11^8)(9^0,7^{13})(5^3,3^{10})\\
&=(37^4,35^5)(33^2,31^4)(29^5,27^8)(25^{8},23^7)\cup(19^{15},17)(15^9,13)(11^8)\cup(7^{13},5^3)(3^{10}),
\end{align*}
where the multiplicity of each chain of odd length is at least $8$. Secondly, replacing $\Gamma$ by $\Gamma_\varepsilon$, we find that $\varepsilon=5$ and $\Gamma_\varepsilon=(37^7,35^2)$, and
\begin{align*}
\tau^5(\Gamma)&=(37^2,35^7)(33^0,31^6)(29^3,27^{10})(25^{6},23^9)\\
&=(37^2,35^7)\cup(31^6,29^3)(27^{10},25^{6})(23^9),
\end{align*}
where there is only one chain of odd length, whose multiplicity is $6$. Thirdly, replacing $\Gamma$ by the new $\Gamma_\varepsilon$ again, we see that $\varepsilon=7$ and $\Gamma_\varepsilon$ is empty, and
\[\tau^7(\Gamma)=(35^9),\]
which is a chain of odd length with multiplicity $9$. In the above process, the multiplicity of each chain of odd length is always at least $6$. So the original chain $\Gamma$ is a $6$-fixed chain.
\end{example}

\begin{remark}\label{remkfix}
Obviously, a $\delta$-fixed chain is also a $(\delta-1)$-fixed chain.
\end{remark}

We next investigate the properties of $\delta$-fixed chains.

\begin{theorem}\label{lemmulfix}
Assume that $\Gamma$ is a chain with no $1$'s. If $\varepsilon\geq \delta$, then $\Gamma$ is $\delta$-fixed.
\end{theorem}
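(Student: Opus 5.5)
Because the definition of $\delta$-fixed is recursive through $\Gamma_\varepsilon$, I will argue by strong induction on the length $\ell(\Gamma)$ of the chain. At each stage two things must be checked. First, every chain of odd length in $\tau^{\varepsilon}(\Gamma)$ has multiplicity at least $\delta$. Second, when $\Gamma_\varepsilon$ is nonempty, it is itself $\delta$-fixed. The second point will follow from the induction hypothesis once I show that $\varepsilon(\Gamma_\varepsilon)\geq\delta$. The chain $\Gamma_\varepsilon$ is a proper subchain, it contains no $1$'s, and its blocks are exactly the blocks of $\Gamma$ lying above $t$. So the multiplicity of the maximal part of each of its blocks is $>\varepsilon$, which gives $\varepsilon(\Gamma_\varepsilon)>\varepsilon\geq\delta$.

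The main work is the first point. I will write $\Gamma$ in block form as $(a_1^{p_1},(a_1-2)^{q_1})(a_2^{p_2},(a_2-2)^{q_2})\cdots$. Here each block has weight $2$, since there are no $1$'s. We have $p_i\geq\varepsilon$, and $q_i\geq 1$ for every block except possibly the last. Applying $\tau$ a total of $\varepsilon$ times, without disconnection in between (the paper notes $\tau^i(\Gamma)$ is a chain for $i<\varepsilon$), turns each block into $(a_i^{p_i-\varepsilon},(a_i-2)^{q_i+\varepsilon})$. The parts $a_i$ with $p_i=\varepsilon$ vanish, and this is exactly where the chain splits. The pieces of $\tau^{\varepsilon}(\Gamma)$ below $t$ therefore start at a part $a_i-2$ with multiplicity $q_i+\varepsilon$, continue through later blocks, and end either at the next vanished top part or at the end of $\Gamma$.

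I then regroup such a piece into blocks from its largest part. The maximal parts of the new blocks are the parts $a_m-2$, with multiplicities $q_m+\varepsilon>\varepsilon$. When the grouping shifts, the maximal parts are the parts $a_m$ with $p_m>\varepsilon$, so their multiplicities $p_m-\varepsilon$ are positive. I expect the main obstacle to be pinning down which case occurs for odd-length pieces. Since the piece begins at $a_i-2$, pairing from the top pairs $(a_i-2,a_{i+1})$, $(a_{i+1}-2,a_{i+2})$, and so on. The maximal part of each such pair is some $a_m-2$, with multiplicity $q_m+\varepsilon\geq\varepsilon+1>\delta$. The final singleton block is also a part $a_m-2$ of the same kind; the exception is when the last original block had $q=0$, but then $(a_m-2)^{\varepsilon}$ appears with multiplicity exactly $\varepsilon\geq\delta$. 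I will also double-check that odd length indeed corresponds to this offset, consistent with the remark that every piece except $\tau^{\varepsilon}(\Gamma_\varepsilon)$ has odd length. It follows that every odd-length chain of $\tau^{\varepsilon}(\Gamma)$ has multiplicity at least $\varepsilon\geq\delta$, which completes the induction.
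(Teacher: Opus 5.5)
Your proposal is correct and follows the paper's proof: induction on $\ell(\Gamma)$, with $\Gamma_\varepsilon$ handled by noting $\varepsilon(\Gamma_\varepsilon)>\varepsilon\geq\delta$ and invoking the induction hypothesis. The only difference is that you verify the first condition, whereas the paper simply asserts that every odd-length chain of $\tau^{\varepsilon}(\Gamma)$ has multiplicity at least $\varepsilon$; you do this by tracking the blocks $(a_i^{p_i-\varepsilon},(a_i-2)^{q_i+\varepsilon})$, regrouping each piece from its top $a_i-2$, and noting that $\tau^{\varepsilon}(\Gamma_\varepsilon)$ has even length by a direct count.
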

\begin{proof}
The proof is by induction on the length of $\Gamma$. If $\ell(\Gamma)=1$, then $\Gamma$ must be $(t^{\varepsilon})$ where $t\geq3$. Since $\Gamma_\varepsilon$ is empty and $\tau^{\varepsilon}(\Gamma)=(t-2^{\varepsilon})$, we can clearly see that $\Gamma$ is $\delta$-fixed. Assume that the assertion is true for $\ell(\Gamma)< l$. We now suppose that $\ell(\Gamma)=l$. First, the multiplicity of each chain of odd length in $\tau^{\varepsilon}(\Gamma)$ is at least $\varepsilon$, which is at least $\delta$. Second, if $\Gamma_\varepsilon$ is nonempty, it is a shorter chain with no $1$'s, whose multiplicity is greater than $\varepsilon$. By the induction hypothesis, we know that $\Gamma_\varepsilon$ is $\delta$-fixed. So the result is true.
\end{proof}

\begin{remark}\label{remark}
Even though $\Gamma$ is a $\delta$-fixed chain, $\varepsilon$ may be less than $\delta$. See Example \ref{exafix} for an instance. But if a $\delta$-fixed chain $\Gamma$ has odd length, then $\varepsilon\geq\delta$ since the smallest part of $\tau^\varepsilon(\Gamma)$ occurs exactly $\varepsilon$ times, and is the maximal element of the last block of $\tau^\varepsilon(\Gamma)$.
\end{remark}

\begin{corollary}
If $\Gamma$ is a $\delta$-fixed chain, then each chain of $\tau^\varepsilon(\Gamma)$ is $\delta$-fixed provided that it has no $1$'s.
\end{corollary}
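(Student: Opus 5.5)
The plan is to read the corollary directly off the recursive definition of $\delta$-fixed, together with Theorem \ref{lemmulfix} and Remark \ref{remark}. Let $\Gamma$ be $\delta$-fixed with multiplicity $\varepsilon$. The chains of $\tau^\varepsilon(\Gamma)$ fall into two kinds, since $\Gamma$ has no $1$'s. One is possibly the chain $\tau^\varepsilon(\Gamma_\varepsilon)$, which has even length. The others all have odd length, by the observation made right after the definition of $\Gamma_\varepsilon$. I would treat the two kinds separately.

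\emph{Odd-length chains.} Let $\Delta$ be a chain of $\tau^\varepsilon(\Gamma)$ of odd length containing no $1$'s. By the definition of $\delta$-fixed, the multiplicity of $\Delta$ is at least $\delta$. Theorem \ref{lemmulfix} then shows directly that $\Delta$ is $\delta$-fixed.

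\emph{The even-length chain.} Suppose $\Gamma_\varepsilon$ is nonempty, and set $\Delta=\tau^\varepsilon(\Gamma_\varepsilon)$. By definition, $\Gamma_\varepsilon$ is itself $\delta$-fixed. Every maximal block part of $\Gamma_\varepsilon$ occurs more than $\varepsilon$ times, so $\varepsilon(\Gamma_\varepsilon)=\varepsilon'>\varepsilon$. I would argue that, after $\varepsilon$ applications of $\tau$, the blocks of $\Delta$ are the same pairs of integers as before, and each maximal block multiplicity has dropped by exactly $\varepsilon$. No block empties, because $\varepsilon'>\varepsilon$. Also no $1$'s or new blocks appear, since $\Gamma_\varepsilon$ has even length and lies above $t$. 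Hence $\varepsilon(\Delta)=\varepsilon'-\varepsilon$, and $\tau^{\varepsilon(\Delta)}(\Delta)=\tau^{\varepsilon'}(\Gamma_\varepsilon)$. The same holds for the distinguished subchain: $\Delta_{\varepsilon(\Delta)}=\tau^{\varepsilon}\big((\Gamma_\varepsilon)_{\varepsilon'}\big)$. The first block attaining the minimum multiplicity is unchanged, because all multiplicities shift by the same amount.

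The defining conditions for $\Delta$ can then be checked by induction on the length of the chain. The odd-length chains of $\tau^{\varepsilon(\Delta)}(\Delta)$ coincide with those of $\tau^{\varepsilon'}(\Gamma_\varepsilon)$, and these have multiplicity at least $\delta$ because $\Gamma_\varepsilon$ is $\delta$-fixed. The subchain $\Delta_{\varepsilon(\Delta)}$ is a $\tau^{\varepsilon}$-shift of the shorter $\delta$-fixed chain $(\Gamma_\varepsilon)_{\varepsilon'}$. So I would prove the auxiliary claim below by induction on length, which gives the result.

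\emph{Auxiliary claim.} If $\Theta$ is a $\delta$-fixed chain of even length and $0\le s<\varepsilon(\Theta)$, then $\tau^{s}(\Theta)$ is a $\delta$-fixed chain.

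The main obstacle is precisely this bookkeeping: showing that applying $\tau$ fewer than $\varepsilon$ times commutes with the recursive structure ($\Gamma_\varepsilon$, $\tau^{\varepsilon}$). In particular, one must check that even length guarantees the bottom block never spawns a part $1$ or a new block. The odd-length case, by contrast, is immediate from Theorem \ref{lemmulfix}.
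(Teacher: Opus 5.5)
Your proposal is correct and follows essentially the same route as the paper: the odd-length chains of $\tau^\varepsilon(\Gamma)$ are handled by Theorem~\ref{lemmulfix}, and the unique even-length chain $\tau^\varepsilon(\Gamma_\varepsilon)$ by the shift identities $\varepsilon(\tau^\varepsilon(\Gamma_\varepsilon))=\varepsilon(\Gamma_\varepsilon)-\varepsilon$ and $\tau^{\varepsilon(\Gamma_\varepsilon)-\varepsilon}(\tau^\varepsilon(\Gamma_\varepsilon))=\tau^{\varepsilon(\Gamma_\varepsilon)}(\Gamma_\varepsilon)$. Your auxiliary claim, proved by induction on length, makes rigorous the recursive condition on the distinguished subchain of $\tau^\varepsilon(\Gamma_\varepsilon)$, which the paper's one-line argument leaves implicit; it is the even-length case of the later Theorem~\ref{lemfixtau}.
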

\begin{proof}
First, the multiplicity of each chain of odd length in $\tau^\varepsilon(\Gamma)$ is at least $\delta$. According to Theorem \ref{lemmulfix}, they are $\delta$-fixed. Second, if there exists a chain of even length, then it must be unique, which is in fact $\tau^\varepsilon(\Gamma_\varepsilon)$. Obviously, $\varepsilon(\tau^\varepsilon(\Gamma_\varepsilon))=\varepsilon(\Gamma_\varepsilon)-\varepsilon$. Applying $\varepsilon(\Gamma_\varepsilon)-\varepsilon$ times of $\tau$ to $\tau^\varepsilon(\Gamma_\varepsilon)$ is equivalent to applying $\varepsilon(\Gamma_\varepsilon)$ times of $\tau$ to $\Gamma_\varepsilon$. It follows from the definition of $\delta$-chains that $\Gamma_\varepsilon$ is $\delta$-fixed. So $\tau^\varepsilon(\Gamma_\varepsilon)$ is $\delta$-fixed.
\end{proof}

\begin{corollary}\label{corch}
If $\Gamma$ is a $\delta$-fixed chain, then each chain without $1$'s produced when incessantly performing $\tau$ is $\delta$-fixed.
\end{corollary}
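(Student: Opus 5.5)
Corollary \ref{corch} follows by iterating the preceding corollary; the only real work is organizing the iteration. I will argue by strong induction on the size $|\Gamma|$ of the $\delta$-fixed chain, or equivalently on the number of applications of $\tau$. Note first what happens to a chain without $1$'s under repeated $\tau$. For $1\le i<\varepsilon$, the partition $\tau^i(\Gamma)$ is still a single chain. At step $\varepsilon$ it breaks into the chains of $\tau^\varepsilon(\Gamma)$.

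For $i<\varepsilon$, I must check that $\tau^i(\Gamma)$ is $\delta$-fixed whenever it contains no $1$'s. The blocks of $\tau^i(\Gamma)$ have the same integers as the blocks of $\Gamma$, and each maximal part has lost exactly $i$ copies. Hence $\varepsilon(\tau^i(\Gamma))=\varepsilon-i$, and $\tau^{\varepsilon-i}(\tau^i(\Gamma))=\tau^\varepsilon(\Gamma)$. The distinguished part $t$ is also the same, so $(\tau^i(\Gamma))_{\varepsilon-i}=\tau^i(\Gamma_\varepsilon)$. This is the same argument the paper used for $\tau^\varepsilon(\Gamma_\varepsilon)$ in the previous corollary. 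The odd-length chains of $\tau^{\varepsilon-i}(\tau^i(\Gamma))$ are exactly those of $\tau^\varepsilon(\Gamma)$, so their multiplicities are at least $\delta$. The subchain condition reduces to showing that $\tau^i(\Gamma_\varepsilon)$ is $\delta$-fixed. Since $|\Gamma_\varepsilon|<|\Gamma|$ and $\Gamma_\varepsilon$ is $\delta$-fixed, this follows from the induction hypothesis applied to $\Gamma_\varepsilon$.

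At step $\varepsilon$, the preceding corollary says that every chain of $\tau^\varepsilon(\Gamma)$ without $1$'s is $\delta$-fixed. From then on, $\tau$ acts independently on the chains of the current partition, because $\tau(\lambda)=\bigcup\tau(\Gamma_i)$. The complication is that images of different chains may later merge into a single chain, and the resulting chain is not literally a descendant of one $\delta$-fixed chain.

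I expect this merging to be the main obstacle. Two chains separated by a gap of exactly $4$ (e.g. smallest part $s$ and next largest part $s-4$) merge only if $s-2$ gets created. That happens when $s$ heads a block and $\tau$ moves a copy of $s$ down to $s-2$; a chain never moves any of its parts above its own maximum. My first attempt is to show that after the first split no merging occurs. The chains in $\tau^\varepsilon(\Gamma)$ are separated exactly where a block's lower value was exhausted. The chain below such a separation starts at a value whose previous block-mate has vanished, so its top value is the lower member of its block and cannot grow upward. If merging does occur, the fallback is to check directly that the merged chain is $\delta$-fixed. Its multiplicity is the minimum over its constituents' top-of-block multiplicities, which stay at least $\delta$ by the invariants above. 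Then Theorem \ref{lemmulfix} finishes for odd-length pieces, and the $\Gamma_\varepsilon$ recursion handles the rest.

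With either route, the induction applied to each chain present at step $\varepsilon$ (all strictly smaller than $\Gamma$) yields the corollary.
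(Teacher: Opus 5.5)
\textbf{The merging step is a genuine gap.} Your handling of $\tau^i(\Gamma)$ for $i<\varepsilon$ (essentially Theorem \ref{lemfixtau}) and of $\tau^\varepsilon(\Gamma)$ is fine. But you chose to include merged chains in the statement, and that is where the argument fails.

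Your first route, that no merging occurs after the first split, is false. Separations in $\tau^\varepsilon(\Gamma)$ occur where a block's \emph{top} value of multiplicity exactly $\varepsilon$ is used up, not its lower value. So consecutive chains of $\tau^\varepsilon(\Gamma)$ are separated by a single missing value. Every $1$-free chain other than $\tau^\varepsilon(\Gamma_\varepsilon)$ has odd length, so its smallest value $b$ forms a singleton block. The very next $\tau$ creates $b-2$, which is exactly that missing value. The relevant growth is downward from the chain above, not upward from the chain below.

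Concretely, $\Gamma=(29^3,27,25^2,23,21^3,19,17^2,15,13^3)$ has $\varepsilon=2$, so it is $2$-fixed by Theorem \ref{lemmulfix}. Then $\tau^2(\Gamma)=(29,27^3)\cup(23^3,21,19^3)\cup(15^3,13,11^2)$, while $\tau^3(\Gamma)=(27^4)\cup(23^2,21^2,19^2,17,15^2,13^2,11,9)$. The two odd chains have fused into a single $1$-free chain $M$.

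Your fallback does not repair this. $M$ has blocks $(23^2,21^2)(19^2,17)(15^2,13^2)(11,9)$, so it has even length and $\varepsilon(M)=1=\delta-1$. Multiplicities therefore do not stay $\geq\delta$: a merge happens one step after the upper piece has lost a copy at every block top. Theorem \ref{lemmulfix} is also unavailable. $M$ is in fact $2$-fixed, but this needs a real argument.

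\textbf{The missing ingredient is a merge lemma.} Every merge has the form $A\cup B$ with $A=\tau(\Gamma_1)$, $B=\tau(\Gamma_2)$ and $\Gamma_1$ of odd length. By Remark \ref{remark}, $\varepsilon(A)\geq\delta-1$, and both pieces are even-length and $\delta$-fixed. One then shows $U=A\cup B$ is $\delta$-fixed by induction on its length, with two cases:
\begin{itemize}
\item If $\varepsilon(A)\leq\varepsilon(B)$, then $U_\varepsilon=A_\varepsilon$, and the rest of $U$ breaks under $\tau^\varepsilon$ into odd chains of multiplicity $\geq\varepsilon+1\geq\delta$.
\item If $\varepsilon(A)>\varepsilon(B)$, then $U_\varepsilon=A\cup B_\varepsilon$, which is covered by induction.
\end{itemize}
This is Lemma \ref{lemtauchai} together with Theorem \ref{Corchmer}.

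Your closing step also has the wrong shape. Applying the induction hypothesis to each chain of $\tau^\varepsilon(\Gamma)$ separately cannot reach merged chains, since they descend from no single piece. The induction must run over partitions all of whose $1$-free chains are $\delta$-fixed, one application of $\tau$ at a time, as in Theorem \ref{thmdelta}.

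For comparison, the paper gives no separate proof of this corollary. It treats it as a bare iteration of the preceding one and defers merging to Theorem \ref{thmdelta}, where the corollary is only invoked for the chains of $\tau(\Gamma)$. Under that narrower reading your first two paragraphs already suffice. As you have framed the claim, however, the merging case remains unproved.
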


\begin{theorem}\label{lemfixtau}
If $\Gamma$ is a $\delta$-fixed chain, so is for each of $\tau(\Gamma),\tau^2(\Gamma),\ldots,\tau^{\varepsilon-1}(\Gamma)$.
\end{theorem}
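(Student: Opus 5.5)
Fix $1\leq i\leq\varepsilon-1$ and put $\Gamma^{(i)}=\tau^i(\Gamma)$. The plan is to show that $\Gamma^{(i)}$ is a chain without $1$'s that has the same evolution as $\Gamma$ from step $\varepsilon$ onward, so its $\delta$-fixedness follows directly from that of $\Gamma$.

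First we establish the basic facts about $\Gamma^{(i)}$. Since $i<\varepsilon$, the remark before the statement says $\Gamma^{(i)}$ is still a chain. Each application of $\tau$ moves one copy of the top element of every block down to the bottom element, so the block structure is preserved. Concretely, $\Gamma$ is grouped as $(t_1^{a_1},(t_1-2)^{b_1})\cdots$, and then
\[
\Gamma^{(i)}=(t_1^{a_1-i},(t_1-2)^{b_1+i})\cdots .
\]
This regrouping from the top is identical, because every maximal part still occurs $a_r-i\geq \varepsilon-i>0$ times. Since $\Gamma$ has no $1$'s, the only part that can drop to $1$ is a block $(3^{a},1^0)$, and that would make $1$ appear. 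This is harmless for chains without $1$'s in the following sense: since $\Gamma$ has no $1$'s, its last block has top part at least $3$. If the top part equals $3$, then $1$'s appear in $\Gamma^{(i)}$. So I need to check the intended convention in the definition. Since $\delta$-fixedness is defined only for chains without $1$'s, I expect that the statement implicitly assumes the last part of $\Gamma$ exceeds $3$, or else that $3^{a}$ with $1$'s is treated as the block $(3,1)$. I would handle the case where $\Gamma^{(i)}$ has no $1$'s, and otherwise note that the statement is vacuous or follows by the same block bookkeeping.

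Next we compare the multiplicity and the $\varepsilon$-decomposition. The block maxima of $\Gamma^{(i)}$ are the same integers as those of $\Gamma$, with multiplicities reduced by $i$. Hence $\varepsilon(\Gamma^{(i)})=\varepsilon-i$, and the first block maximum attaining the minimum is the same $t$. Therefore $(\Gamma^{(i)})_{\varepsilon-i}=\tau^i(\Gamma_\varepsilon)$. Moreover,
\[
\tau^{\varepsilon-i}(\Gamma^{(i)})=\tau^{\varepsilon}(\Gamma),
\]
so every chain of odd length in $\tau^{\varepsilon-i}(\Gamma^{(i)})$ has multiplicity at least $\delta$, because $\Gamma$ is $\delta$-fixed. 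This settles the first clause of the definition.

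For the second clause we need $\tau^i(\Gamma_\varepsilon)$ to be $\delta$-fixed when it is nonempty. The multiplicity of $\Gamma_\varepsilon$ exceeds $\varepsilon>i$, so $i<\varepsilon(\Gamma_\varepsilon)$. The plan is to induct on the length of the chain, proving the theorem for all $1\leq i<\varepsilon$ simultaneously. Since $\Gamma_\varepsilon$ is a shorter $\delta$-fixed chain, the induction hypothesis gives that $\tau^i(\Gamma_\varepsilon)$ is $\delta$-fixed. The base case, a chain of length $1$, follows from Theorem~\ref{lemmulfix}, since for $(t^\varepsilon)$ the $\varepsilon-i$ remaining steps yield $((t-2)^{\varepsilon})$.

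I expect the main obstacle to be the careful verification that the decomposition ($\varepsilon$, $t$, $\Gamma_\varepsilon$) of $\Gamma^{(i)}$ is exactly the shifted one. This includes the edge cases where a block of weight $2$ has bottom multiplicity $0$, and the boundary involving $1$'s discussed above.
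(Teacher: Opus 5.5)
Your proof is correct and follows the paper's route. Like the paper, you use $\varepsilon(\tau^i(\Gamma))=\varepsilon-i$, the identity $\tau^{\varepsilon-i}(\tau^i(\Gamma))=\tau^{\varepsilon}(\Gamma)$ for the odd-length chains, and $(\tau^i(\Gamma))_{\varepsilon-i}=\tau^i(\Gamma_\varepsilon)$ with recursion on the shorter chain $\Gamma_\varepsilon$; the paper does one application of $\tau$ at a time and leaves the induction on length implicit ("continuing in this manner"), whereas you handle all $i<\varepsilon$ at once and make the induction explicit. Your remark that $\tau^i(\Gamma)$ acquires $1$'s when $\Gamma$ ends in a lone block $(3^a)$ is a real imprecision in the statement, not addressed by the paper. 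One small correction: the length-one base case does not follow from Theorem~\ref{lemmulfix}, since $\varepsilon(\tau^i(\Gamma))=\varepsilon-i$ may be below $\delta$. It follows instead from your direct check that $\tau^{\varepsilon-i}(\tau^i(\Gamma))=((t-2)^{\varepsilon})$ with $\varepsilon\geq\delta$, which is just your general step with $\Gamma_\varepsilon$ empty.
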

\begin{proof}
It is clear that $\varepsilon(\tau(\Gamma))=\varepsilon(\Gamma)-1$, which implies that all chains of odd length produced after applying $\tau$ to $\tau(\Gamma)$ exactly $\varepsilon(\Gamma)-1$ times are those chains of odd length in $\tau^{\varepsilon}(\Gamma)$. Thus, they all have multiplicity of at least $\delta$. In addition, since $(\tau(\Gamma))_\varepsilon=\tau(\Gamma_\varepsilon)$ and $\Gamma_\varepsilon$ is $\delta$-fixed, the above argument still holds. Continuing in this manner shows that $(\tau(\Gamma))_\varepsilon$ is $\delta$-fixed. So $\tau(\Gamma)$ is a $\delta$-fixed chain.

What we have shown is that if $\Gamma$ is $\delta$-fixed, then $\tau(\Gamma)$ is $\delta$-fixed. Continuing this line of reasoning yields the desired result.
\end{proof}

\begin{lemma}\label{lemtauchai}
Suppose that $\Gamma_1$ and $\Gamma_2$ are two $\delta$-fixed chains of even length. If $\varepsilon(\Gamma_1)\geq \delta-1$ and $\Gamma_1\cup\Gamma_2$ is a chain, then $\Gamma_1\cup\Gamma_2$ is a $\delta$-fixed chain.
\end{lemma}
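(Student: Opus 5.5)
We argue by induction on the length of $\Gamma_1$ (equivalently, on the number of blocks of $\Gamma_1$). The structural point is that $\Gamma_1$ and $\Gamma_2$ both have even length, so grouping $\Gamma_1\cup\Gamma_2$ into blocks from the largest part gives exactly the blocks of $\Gamma_1$ followed by the blocks of $\Gamma_2$. Hence
\[\varepsilon(\Gamma_1\cup\Gamma_2)=\min\{\varepsilon(\Gamma_1),\varepsilon(\Gamma_2)\},\]
and $\tau$ acts blockwise, so $\tau^i(\Gamma_1\cup\Gamma_2)=\tau^i(\Gamma_1)\cup\tau^i(\Gamma_2)$ for all $i$ up to that minimum. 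Write $\varepsilon_1=\varepsilon(\Gamma_1)$, $\varepsilon_2=\varepsilon(\Gamma_2)$ and $\varepsilon=\min\{\varepsilon_1,\varepsilon_2\}$. We then check the two clauses of the definition of $\delta$-fixedness for $\Gamma=\Gamma_1\cup\Gamma_2$.

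\emph{Case $\varepsilon_2\le\varepsilon_1$.} The first block attaining multiplicity $\varepsilon$ may lie in $\Gamma_1$ (only if $\varepsilon_1=\varepsilon_2$) or in $\Gamma_2$. If it lies in $\Gamma_2$, then $\Gamma_\varepsilon=\Gamma_1\cup(\Gamma_2)_{\varepsilon}$. After $\varepsilon$ applications of $\tau$, the odd-length chains of $\tau^\varepsilon(\Gamma)$ are exactly those of $\tau^{\varepsilon}(\Gamma_2)$. The only possible exception is where $\Gamma_1$ meets $\Gamma_2$, but $\tau^\varepsilon(\Gamma_1)$ still has all its blocks nonempty at the top, so it stays attached to $\tau^\varepsilon((\Gamma_2)_\varepsilon)$ as an even chain. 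Since $\Gamma_2$ is $\delta$-fixed, these odd chains have multiplicity at least $\delta$. Next, $\Gamma_\varepsilon=\Gamma_1\cup(\Gamma_2)_\varepsilon$ is again a union of two even-length chains, both $\delta$-fixed ($(\Gamma_2)_\varepsilon$ by the definition applied to $\Gamma_2$), with $\varepsilon(\Gamma_1)\ge\delta-1$. We conclude by an inner induction on the total length, which is strictly smaller. If the minimal block lies in $\Gamma_1$, we argue symmetrically with $\Gamma_\varepsilon=(\Gamma_1)_{\varepsilon}$, but now the split at that block produces an odd chain containing the tail of $\Gamma_1$ together with $\tau^\varepsilon(\Gamma_2)$. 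This is the delicate case.

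\emph{Case $\varepsilon_1<\varepsilon_2$.} The split occurs inside $\Gamma_1$ and $\Gamma_\varepsilon=(\Gamma_1)_{\varepsilon_1}$, which is $\delta$-fixed by hypothesis. The odd chains of $\tau^{\varepsilon_1}(\Gamma)$ are those of $\tau^{\varepsilon_1}(\Gamma_1)$, except the last one, which merges with $\tau^{\varepsilon_1}(\Gamma_2)$. This merged chain has odd length. Its multiplicity is the minimum of the multiplicity of the last odd chain of $\tau^{\varepsilon_1}(\Gamma_1)$ and the top multiplicities $\varepsilon_2-\varepsilon_1$ shifted appropriately. The key computation is that in the merged chain, the block maxima coming from $\Gamma_2$ change from $t^{a}$ to $(t-2)$-parts of multiplicity $b+\varepsilon_1\ge\varepsilon_1+1$ after regrouping with the odd offset. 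This is where the hypothesis $\varepsilon_1\ge\delta-1$ enters: it yields multiplicity at least $\varepsilon_1+1\ge\delta$. With this in hand, the merged chain has multiplicity at least $\delta$, so by Theorem \ref{lemmulfix} it is $\delta$-fixed, and the first clause holds.

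The main obstacle is precisely this regrouping computation for the merged odd chain (and the analogous subcase with $\varepsilon_1=\varepsilon_2$). We must verify that shifting the block boundary by one integer turns every block maximum of $\tau^{\varepsilon_1}(\Gamma_2)$ into a former block minimum that has received $\varepsilon_1$ extra parts, so its multiplicity is at least $\varepsilon_1+1$. I would first try to make this precise on a single pair of blocks $(t^a,(t-2)^b)$ followed by $\Gamma_2$, and then propagate it through all blocks. Theorem \ref{lemfixtau} handles the fact that $\tau^{\varepsilon_1}(\Gamma_2)$ remains $\delta$-fixed when $\varepsilon_1<\varepsilon_2$.
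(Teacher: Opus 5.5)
Your proposal is correct and is essentially the paper's proof. It uses the same split according to whether the first block of multiplicity $\varepsilon$ lies in $\Gamma_1$ or in $\Gamma_2$, induction on the total length via $\Gamma_\varepsilon=\Gamma_1\cup(\Gamma_2)_\varepsilon$ when $\varepsilon_2<\varepsilon_1$, and otherwise the count that after $\tau^{\varepsilon}$ every block maximum of an odd chain below $\tau^\varepsilon(\Gamma_\varepsilon)$ is a former block minimum that has gained $\varepsilon$ copies, which gives multiplicity at least $\varepsilon_1+1\geq\delta$. The case $\varepsilon_1=\varepsilon_2$ that you flag as delicate needs nothing new: the paper treats all of $\varepsilon_1\leq\varepsilon_2$ at once by applying that count to the even-length remainder $\overline{\Gamma}_\varepsilon$ of $\Gamma$ below $\Gamma_\varepsilon$. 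One correction to your description of that case: $\tau^\varepsilon(\Gamma_2)$ may itself break into several odd chains, rather than merging as a single piece with the tail of $\Gamma_1$, and the same bound applies to each piece.
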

\begin{proof}
Let $\ell=\ell(\Gamma_1)+\ell(\Gamma_2)$, which is even and at least $4$, and let $\Gamma=\Gamma_1\cup\Gamma_2$. We use $\varepsilon_i$ to denote $\varepsilon(\Gamma_i)$ for $i=1,2$, and $\varepsilon$ to denote $\varepsilon(\Gamma)$.

It is easy to verify that the result is true when $\ell=4$. We shall proceed by induction on $\ell$.

If $\varepsilon_1\leq\varepsilon_2$, then $\varepsilon=\varepsilon_1$ and $\Gamma_\varepsilon=(\Gamma_1)_\varepsilon$, which is $\delta$-fixed since $\Gamma_1$ is $\delta$-fixed. Let $\overline{\Gamma}_\varepsilon$ denote the subchain of $\Gamma$ obtained by removing the subchain $\Gamma_\varepsilon$. Then $\overline{\Gamma}_\varepsilon$ is a chain of even length, where the multiplicity of the maximal element in each block is at least $\varepsilon_1$, and the maximal element of the first block has multiplicity of exactly $\varepsilon_1$. Thus, $\varepsilon(\overline{\Gamma}_\varepsilon)=\varepsilon_1=\varepsilon$. Furthermore, each chain of $\tau^{\varepsilon}(\overline{\Gamma}_\varepsilon)$ has odd length with multiplicity being at least $\varepsilon+1$, which is at least $\delta$. They are exactly the chains of odd length in $\tau^\varepsilon(\Gamma)$.

If $\varepsilon_1>\varepsilon_2$, then $\varepsilon=\varepsilon_2$, and $\Gamma_\varepsilon=\Gamma_1\cup(\Gamma_2)_\varepsilon$, which is $\delta$-fixed by the induction hypothesis.
The chains of odd length in $\tau^\varepsilon(\Gamma)$ are exactly those in $\tau^{\varepsilon}(\Gamma_2)$, each of which has multiplicity of at least $\delta$ since $\Gamma_2$ is $\delta$-fixed.

Therefore, $\Gamma$ is always a $\delta$-fixed chain.
\end{proof}

\begin{theorem}\label{Corchmer}
Suppose that $\Gamma_1$ and $\Gamma_2$ are two $\delta$-fixed chains where $\delta\geq2$ and $\Gamma_1$ has odd length. If $\tau(\Gamma_1)\cup\tau(\Gamma_2)$ is a chain without $1$'s, then $\tau(\Gamma_1)\cup\tau(\Gamma_2)$ is a $\delta$-fixed chain.
\end{theorem}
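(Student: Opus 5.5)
We aim to show that $\Gamma:=\tau(\Gamma_1)\cup\tau(\Gamma_2)$ is $\delta$-fixed by reducing to Lemma \ref{lemtauchai}. The first step is to understand the two pieces separately.

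\textbf{Shape of the pieces.} Since $\Gamma_1$ has odd length and no $1$'s, its last block is a singleton block $(t^a)$. This block is regarded as $(t^a,(t-2)^0)$, so $\tau(\Gamma_1)$ acquires the new smallest part $t-2$ with multiplicity $1$. By Remark \ref{remark}, $\varepsilon(\Gamma_1)\geq\delta\geq2$. Hence every block-maximal part of $\Gamma_1$ has multiplicity at least $2$, and $\tau(\Gamma_1)$ is still a single chain, now of even length. Its multiplicity is $\varepsilon(\Gamma_1)-1\geq\delta-1$. By Theorem \ref{lemfixtau} (using $\varepsilon(\Gamma_1)\geq 2$), $\tau(\Gamma_1)$ is $\delta$-fixed.

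For $\Gamma_2$ there are two cases. If $\varepsilon(\Gamma_2)\geq2$, then again $\tau(\Gamma_2)$ is a single $\delta$-fixed chain by Theorem \ref{lemfixtau}. If $\varepsilon(\Gamma_2)=1$, then $\tau(\Gamma_2)$ may split. By Corollary \ref{corch} (equivalently, the definition applied with $\varepsilon=1$), its pieces are $\tau((\Gamma_2)_\varepsilon)$, which is $\delta$-fixed, and odd-length chains of multiplicity at least $\delta$, which are $\delta$-fixed by Theorem \ref{lemmulfix}. Only the top piece of $\tau(\Gamma_2)$ can be adjacent to $\tau(\Gamma_1)$. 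The hypothesis that the union is one chain therefore forces $\tau(\Gamma_2)$ itself to be a single chain; otherwise the union would not be a chain.

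\textbf{Gluing.} Since the union is a chain, the part $t-2$ (or $t-4$) of $\tau(\Gamma_1)$ is followed consecutively by the largest part of $\tau(\Gamma_2)$. Because $\tau(\Gamma_1)$ has even length, the block structure of the union is just the concatenation of the two block structures. We split on the parity of $\ell(\tau(\Gamma_2))$.

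\emph{Even length.} Lemma \ref{lemtauchai} applies directly with first chain $\tau(\Gamma_1)$: it is $\delta$-fixed, has even length, and has multiplicity $\varepsilon(\Gamma_1)-1\geq\delta-1$.

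\emph{Odd length.} Lemma \ref{lemtauchai} is stated only for two even chains, so we redo its inductive argument with $\Gamma_2$ odd. Let $\varepsilon=\min\{\varepsilon(\tau\Gamma_1),\varepsilon(\tau\Gamma_2)\}$.
\begin{itemize}
\item If the minimum is attained in $\tau(\Gamma_1)$, then $\Gamma_\varepsilon=(\tau\Gamma_1)_\varepsilon$, which is $\delta$-fixed. The odd chains of $\tau^{\varepsilon}(\Gamma)$ come from the remaining part, whose block maxima all have multiplicity at least $\varepsilon$. We must check that these chains have multiplicity at least $\delta$. Since the remainder is odd, its last odd chain consists of the smallest part with multiplicity exactly the multiplicity of the last block of $\tau(\Gamma_2)$, which is at least $\delta$ by Remark \ref{remark} applied to $\tau(\Gamma_2)$. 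The other odd chains arise from block maxima of multiplicity exactly $\varepsilon$, and the resulting pieces have multiplicity at least $\varepsilon+1\geq\delta$.
\item If the minimum is attained in $\tau(\Gamma_2)$, then $\Gamma_\varepsilon=\tau(\Gamma_1)\cup(\tau\Gamma_2)_\varepsilon$. This is $\delta$-fixed by Lemma \ref{lemtauchai}, since $(\tau\Gamma_2)_\varepsilon$ is even and $\delta$-fixed. The odd chains of $\tau^{\varepsilon}(\Gamma)$ are those of $\tau^{\varepsilon}(\tau\Gamma_2)$, which are fine because $\tau(\Gamma_2)$ is $\delta$-fixed.
\end{itemize}

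The main obstacle is this odd case, specifically verifying the multiplicity bound $\geq\delta$ for the odd chains of $\tau^\varepsilon(\Gamma)$ when the minimum comes from $\tau(\Gamma_1)$, where $\varepsilon$ may be as small as $\delta-1$. We would handle it by tracking the multiplicities exactly as in the proof of Lemma \ref{lemtauchai}: whenever a block maximum of multiplicity $\varepsilon$ is exhausted, the resulting piece starts at the block minimum with multiplicity at least $\varepsilon+1$. We would also need to confirm the base cases with small total length by direct check.
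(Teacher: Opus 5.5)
Your treatment of $\tau(\Gamma_1)$ and your even-length case match the paper's proof: both pieces are $\delta$-fixed, $\varepsilon(\tau(\Gamma_1))\ge\delta-1$, and Lemma \ref{lemtauchai} applies. The gap is the odd-length branch. That branch is empty, you never show this, and the argument you sketch for it is wrong.

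\emph{Why the odd branch is empty.} You take $\Gamma_1$ above $\Gamma_2$, and consecutive chains differ by at least $4$, so $g(\Gamma_2)\le t-4$, where $t$ is the smallest part of $\Gamma_1$. The smallest part of $\tau(\Gamma_1)$ is $t-2$. Hence the union is a chain only if $\tau(\Gamma_2)$ is a single chain with largest part $t-4=g(\Gamma_2)$. So the top block maximum of $\Gamma_2$ survives $\tau$, and no other block maximum is exhausted (otherwise $\tau(\Gamma_2)$ splits); that is, $\varepsilon(\Gamma_2)\ge2$. In particular, your subcase $\varepsilon(\Gamma_2)=1$ with $\tau(\Gamma_2)$ unsplit is the only way $\tau(\Gamma_2)$ could be a single odd chain. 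There the largest part of $\tau(\Gamma_2)$ is $g(\Gamma_2)-2\le t-6$, so it cannot glue to $\tau(\Gamma_1)$. Once $\varepsilon(\Gamma_2)\ge2$, $\tau$ keeps the two-element blocks of $\Gamma_2$ and turns a final singleton block $(u^z)$, $z\ge2$, into $(u^{z-1},(u-2)^1)$; here $u\neq3$ because the union has no $1$'s. So $\tau(\Gamma_2)$ has even length. This is the paper's ``clearly have even length'', after which Lemma \ref{lemtauchai} finishes the proof.

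\emph{Why the odd-case sketch fails.} In that hypothetical case the conclusion would actually be false, so no argument could work. In your first bullet, the last odd chain of $\tau^{\varepsilon}(\Gamma)$ does not end at the old smallest part. The final singleton $(u^z)$ of $\tau(\Gamma_2)$ creates a new part $u-2$ of multiplicity exactly $\varepsilon$. After regrouping below the cut, $u-2$ is a singleton block maximum, so that chain has multiplicity $\varepsilon$. When the minimum sits in $\tau(\Gamma_1)$ and $\varepsilon(\Gamma_1)=\delta$, this gives $\varepsilon=\delta-1<\delta$. Equivalently, an odd-length chain $\Gamma$ with $\varepsilon(\Gamma)\le\varepsilon(\tau(\Gamma_1))=\delta-1$ cannot be $\delta$-fixed, by Remark \ref{remark}. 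So the odd branch must be ruled out as above, not proved.
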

\begin{proof}
It follows from Remark \ref{remark} that $\varepsilon(\Gamma_1)\geq \delta$. Therefore, $\varepsilon(\tau(\Gamma_1))\geq \delta-1$. Both $\tau(\Gamma_1)$ and $\tau(\Gamma_2)$ are $\delta$-fixed according to Theorem \ref{lemfixtau}, and clearly have even length. We now see that the desired result is a direct consequence of Lemma \ref{lemtauchai}.
\end{proof}

\begin{theorem}\label{thmdelta}
For an odd partition $\lambda$, if each chain without $1$'s in $\lambda$ is $\delta$-fixed, then each chain without $1$'s in $\tau^{r}(\lambda)$ is $\delta$-fixed for all $r\geq 1$.
\end{theorem}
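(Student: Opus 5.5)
It suffices to treat $r=1$; the general case then follows by induction on $r$, applying the $r=1$ statement to $\tau^{r-1}(\lambda)$. So assume every chain without $1$'s in $\lambda$ is $\delta$-fixed, and let $\lambda=\Gamma_1\cup\cdots\cup\Gamma_m$ be the chain decomposition. Since the blocks of $\lambda$ are exactly the blocks of the individual chains, we have $\tau(\lambda)=\tau(\Gamma_1)\cup\cdots\cup\tau(\Gamma_m)$. Hence every chain $\Delta$ of $\tau(\lambda)$ arises in one of two ways: it is a chain inside a single $\tau(\Gamma_i)$ (obtained by splitting), or it is a union of pieces coming from several consecutive $\tau(\Gamma_i)$ that become adjacent (obtained by merging).

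First we analyse a single chain. If $\Gamma$ is a $\delta$-fixed chain and $\varepsilon(\Gamma)\geq2$, then $\tau(\Gamma)$ is still one chain, and by Theorem \ref{lemfixtau} it is $\delta$-fixed. If $\varepsilon(\Gamma)=1$, then $\tau(\Gamma)=\tau^{\varepsilon}(\Gamma)$, and every chain without $1$'s in it is $\delta$-fixed by Corollary \ref{corch} (or the corollary preceding it).

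The chain containing $1$'s in $\lambda$ requires a separate remark. Any chain of $\tau(\lambda)$ that comes from it and contains no $1$'s consists of complete blocks of the form $(t,t-2)$ with $t\geq 5$, except possibly a block $(3,1)$. I would argue that such pieces behave like a subchain of a chain without $1$'s. However, the hypothesis of Theorem \ref{thmdelta} gives nothing about this chain, so I may instead restrict the claim to chains whose parts are all inherited from chains without $1$'s. Alternatively, I would note that merging can only happen downward: a $1$-chain turns its $3$'s into $1$'s, so its upper part can detach but never merge. I would check which of these is intended.

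Next, merging. Two consecutive chains $\Gamma_i,\Gamma_{i+1}$ of $\lambda$ are separated by a gap of at least $4$. After $\tau$ they can touch only if the smallest part of $\Gamma_i$ drops by $2$. This happens exactly when the last block of $\Gamma_i$ is a weight-2 block whose lower element has multiplicity $0$, that is, when $\Gamma_i$ has odd length. In that case $\tau(\Gamma_i)$ has even length, its new bottom part is $s-2$, and it can join the top of $\tau(\Gamma_{i+1})$, which never moves up. So a merged chain has the form $\tau(\Gamma_i)\cup\tau(\Gamma_{i+1})\cup\cdots$, where each of $\Gamma_i,\Gamma_{i+1},\dots$ except the last has odd length. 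When $\delta\geq2$, two such pieces are combined by Theorem \ref{Corchmer}. To iterate, combining $\tau(\Gamma_i)$ with an already-merged lower piece, I would use Lemma \ref{lemtauchai} directly. Its hypotheses hold: $\varepsilon(\tau(\Gamma_i))\geq\delta-1$ follows from Remark \ref{remark}, and both pieces are $\delta$-fixed of even length. The exception is when the lower piece has odd length, which happens when the final $\Gamma$ has even length. In that case I would absorb it by merging from the bottom up and applying Lemma \ref{lemtauchai} repeatedly.

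I expect the main obstacle to be this bookkeeping for merges of several chains and of split fragments. A fragment produced by splitting (when $\varepsilon=1$) may itself merge with a neighbour, so I must verify that the fragment satisfies the hypotheses of Lemma \ref{lemtauchai}. The intended argument is that fragments of odd length have multiplicity at least $\delta$ by the definition of $\delta$-fixed, so they play the role of $\Gamma_1$ in Theorem \ref{Corchmer}. The case $\delta=1$ is trivial, since every chain without $1$'s is $1$-fixed by Theorem \ref{lemmulfix}.
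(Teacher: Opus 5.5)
Your plan follows the paper's proof: reduce to $r=1$, dispose of $\delta=1$, handle chains split off from a $\delta$-fixed chain by Theorem~\ref{lemfixtau} and Corollary~\ref{corch}, and handle merged chains via Theorem~\ref{Corchmer}. The case you leave open is a genuine gap: chains of $\tau(\lambda)$ without $1$'s that come from the chain of $\lambda$ containing $1$'s. It cannot be closed, because the statement is false there. Your first option (such pieces behave like pieces of a $\delta$-fixed chain) and your third (the upper part of the $1$-chain can detach but never merge) both fail.

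\begin{itemize}
\item Take $\lambda=(9,7,5,3,1)$ and $\delta=3$. The hypothesis holds vacuously, yet $\tau(\lambda)=(7^2,3^2)$. The chain $(7^2)$ has odd length and $\varepsilon=2<3$, so by Remark~\ref{remark} it is not $3$-fixed.
\item Merging also occurs. For $\lambda=(9^5,5^2,3,1)$ and $\delta=5$, the chain $(9^5)$ is $5$-fixed by Theorem~\ref{lemmulfix}. But $\tau(\lambda)=(9^4,7,5,3^2)$ is a single chain without $1$'s with $\varepsilon=1$. Applying $\tau$ once more yields the odd-length chain $(3^3)$ of multiplicity $3<5$.
\end{itemize}

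So only your second option is viable: restrict the conclusion to chains of $\tau(\lambda)$ whose parts come from the $\delta$-fixed chains of $\lambda$. This is also the only case the paper's own proof treats. It starts from ``let $\Gamma$ be a $\delta$-fixed chain in $\lambda$'' and never examines the $1$-chain. Once you restrict, your reduction from general $r$ to $r=1$ is no longer automatic. The hypothesis you would need on $\tau^{r-1}(\lambda)$ then holds only for some of its chains, so that step needs a formulation that propagates.

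Within the restricted setting, your handling of merges is more careful than the paper's. The paper speaks only of two chains merging, but three or more can merge, e.g.\ $(13^2,9^2,5^2)\mapsto(13,11,9,7,5,3)$. Iterating Lemma~\ref{lemtauchai} from the bottom up is the right way to cover this.

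The exceptions you worry about never arise when $\delta\ge2$, because every piece entering a merge has even length. Such a piece is one of three things:
\begin{itemize}
\item $\tau(\Gamma)$ for an odd-length $\Gamma$, which does not split since $\varepsilon(\Gamma)\ge\delta\ge2$ by Remark~\ref{remark};
\item $\tau(\Gamma)$ for an unsplit even-length $\Gamma$;
\item the top piece $\tau(\Gamma_\varepsilon)$ of a split chain.
\end{itemize}
No other fragment of a split chain can merge. A split chain has $\varepsilon=1<\delta$, hence even length, so its smallest part does not move. Thus Lemma~\ref{lemtauchai} applies directly at each step. Your idea of letting odd-length fragments play the role of $\Gamma_1$ in Theorem~\ref{Corchmer} is unnecessary. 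It would not fit anyway, since $\Gamma_1$ there is a chain before $\tau$ is applied, not a chain of $\tau(\lambda)$.
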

\begin{proof}
It suffices to prove the result for $r=1$. The case $\delta=1$ is trivial. So we assume that $\delta\geq2$.

Let $\Gamma$ be a $\delta$-fixed chain in $\lambda$. Then each chain without $1$'s in $\tau(\Gamma)$ is $\delta$-fixed according to Corollary \ref{corch}. Such chains are separated from $\Gamma$ when performing $\tau$.

There exist another kind of chains in $\tau(\lambda)$, which are produced by merging two chains. The former chain must be $\tau(\Gamma_1)$, where $\Gamma_1$ is a chain of odd length in $\lambda$; the latter one must either be $\tau(\Gamma)$ or $\tau(\Gamma_\varepsilon)$ where $\Gamma$ is a chain of $\lambda$. We can always denote the latter chain by $\tau(\Gamma_2)$ where $\Gamma_2$ is $\delta$-fixed. It follows from Theorem \ref{Corchmer} that $\tau(\Gamma_1)\cup\tau(\Gamma_2)$ is $\delta$-fixed.

Each chain of $\tau(\lambda)$ is either separated from a chain of $\lambda$ or merged by two chains under the operation $\tau$. The above arguments show that it is always $\delta$-fixed provided it contains no $1$'s.
\end{proof}

For a chain $\Gamma$, the block pattern of $\Gamma$ refers to the way we group the distinct parts of $\Gamma$ into blocks. If the two distinct elements in every block of $\Gamma$ are fixed when performing $\tau$, i.e., the companion of each element is changeless, we say that the block pattern of $\Gamma$ remains the same. Let $t$ be the largest element of $\Gamma$, which lies in a block of $\Gamma$ with $t-2$. If $t+2$ and $t$ occupy a block together after performing $\tau$ several times, we say that the block pattern of $\Gamma$ will be affected by the chain before it. Moreover, the chain affecting $\Gamma$ must have odd length. If the way we group the elements of $\Gamma$ into blocks is irrelevant to the elements before $\Gamma$, we say that the block pattern of $\Gamma$ will not be affected by the chains before it.

\begin{lemma}\label{lemsp}
Assume that $\Gamma$ is a $\delta$-fixed chain with $t$ as its smallest element. For $0<r<\delta$, the smallest element of $\tau^{r}(\Gamma)$ is either $t$ or $t-2$. Moreover, if the smallest element of $\tau^r(\Gamma)$ is $t-2$, then $t$ and $t-2$ occupy a block together.
\end{lemma}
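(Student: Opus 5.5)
The plan is to follow the last block of $\Gamma$, that is, the block containing the smallest element $t$. By the definition of $\tau$, parts only move down by $2$ inside their own block. So for the smallest element to fall below $t$, the value $t$ must be the \emph{maximal} element of its block, paired with $t-2$. The smallest element can then never go below $t-2$ within fewer than $\delta$ steps, provided two things hold: $t$ is still present at that time, and the block pattern around $t$ does not change unexpectedly. I would argue by induction on $\ell(\Gamma)$, and within that by splitting the range $0<r<\delta$ at $r=\varepsilon$, where $\varepsilon=\varepsilon(\Gamma)$.

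\emph{Case 1: $\ell(\Gamma)$ odd.} The last block is $(t^{b},(t-2)^0)$, so $t$ is a block maximum. By Remark \ref{remark} we have $\varepsilon\geq\delta>r$. For $r<\varepsilon$ the iterate $\tau^r(\Gamma)$ is still a chain with the same block pattern, since no block maximum has been exhausted. After $r$ steps the last block is $(t^{b-r},(t-2)^r)$ with $b-r\geq\varepsilon-r>0$. Hence the smallest element is $t-2$, and it shares a block with $t$, which gives both assertions.

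\emph{Case 2: $\ell(\Gamma)$ even.} The last block is $((t+2)^a,t^b)$ with $b\geq1$. As long as the block pattern is unchanged, which holds for $r<\varepsilon$, the parts equal to $t$ only gain multiplicity. So the smallest element stays $t$. If $\varepsilon\geq\delta$ we are done. Otherwise consider $r=\varepsilon<\delta$. Then $\tau^\varepsilon(\Gamma)$ splits into $\tau^\varepsilon(\Gamma_\varepsilon)$, which lies entirely above $t$, together with chains of odd length. The chain containing $t$ is one of the odd-length pieces, and by $\delta$-fixedness its multiplicity is at least $\delta$. If $t$ is now the maximal element of a singleton block $(t^{c},(t-2)^0)$, then $c\geq\delta$, as the smallest part of an odd chain, exactly as in Remark \ref{remark}. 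For the further $r-\varepsilon<\delta$ steps we apply Case 1 to this odd chain. This is legitimate because it is $\delta$-fixed by Corollary \ref{corch}. Case 1 shows that $t$ survives and that $t-2$ appears only inside a block together with $t$. If instead $t$ remains the lower element of its block, we iterate the argument on that chain, which is shorter or has a later splitting time. The chains split off above never contain $t$, so they are irrelevant to the smallest element.

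The main obstacle is Case 2: making precise that after a split at time $\varepsilon$ the regrouping of the odd piece containing $t$ leaves $t$ with multiplicity at least $\delta$ minus the elapsed time. More accurately, one must show that $t$ keeps positive multiplicity for all $r<\delta$, and also handle possible repeated splits before time $\delta$. I would try first to phrase the induction directly with the hypothesis ``for every $\delta$-fixed chain and every $0<r<\delta$'', and to invoke Theorem \ref{lemfixtau} and Corollary \ref{corch} so that every intermediate piece containing $t$ is again $\delta$-fixed. The multiplicity bound then always comes from the definition of $\delta$-fixed, applied at the current splitting time, rather than from tracking the counts explicitly.
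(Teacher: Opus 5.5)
Your Case 1, and Case 2 up to time $\varepsilon$, are correct and agree with the paper. The gap is the passage from $\tau^\varepsilon(\Gamma)$ to $\tau^r(\Gamma)=\tau^{r-\varepsilon}(\tau^\varepsilon(\Gamma))$. There you apply Case 1 to the odd piece $C$ containing $t$ as if it stood alone. But Case 1 only describes $\tau^{r-\varepsilon}(C)$ with $C$ regarded as a partition by itself, whereas $\tau$ groups the parts of all of $\tau^{\varepsilon}(\Gamma)$ from the top down. The pieces above $C$ (namely $\tau^\varepsilon(\Gamma_\varepsilon)$ and the other odd pieces) keep evolving, and they can descend into the one-value gaps and merge with $C$. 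If some piece hands an unpaired bottom element to the next one, the block pattern of $C$ shifts. Then $t$ could end up as the lower element of $(t+2,t)$, with $t-2$ in a block without $t$. So the sentence ``the chains split off above never contain $t$, so they are irrelevant to the smallest element'' is exactly the unproved step, and it is false in general. It is the phenomenon the paper calls the block pattern of a chain being affected by the chains before it. You cannot cite Theorem~\ref{thmnaff} to close it either, since that theorem is proved from this lemma.

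The missing idea is a top-down non-interference argument, and this is where the induction on $\ell(\Gamma)$ is actually needed.
\begin{itemize}
\item Write $\tau^\varepsilon(\Gamma)=\Gamma_1\cup\cdots\cup\Gamma_k$, where $\Gamma_1=\tau^\varepsilon(\Gamma_\varepsilon)$ is the unique even piece (when $\Gamma_\varepsilon\neq\emptyset$) and $t\in\Gamma_k$. All pieces are $\delta$-fixed.
\item During the remaining $r-\varepsilon<\delta$ steps, each odd piece $\Gamma_i$ has $\varepsilon(\Gamma_i)\ge\delta$. So its bottom $b_i$ survives, and $b_i-2$ appears only in a block with $b_i$.
\item For the shorter even chain $\Gamma_1$, either $r-\varepsilon\le\varepsilon(\Gamma_1)$ and its bottom does not move, or the induction hypothesis (this lemma for shorter even chains) says its new bottom $b_1-2$ shares a block with $b_1$.
\item Hence no piece ever disturbs the grouping of the next one, and the $\tau^{r'}(\Gamma_i)$ are grouped independently. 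Near $t$, the partition $\tau^r(\Gamma)$ is just $\tau^{r-\varepsilon}(\Gamma_k)$, to which your Case 1 applies.
\end{itemize}
Note that the induction hypothesis is needed for $\Gamma_1$, not for the piece containing $t$. Your branch ``if instead $t$ remains the lower element of its block'' never occurs. Indeed, $t$ is the bottom of the piece running from $u-2$ down to $t$, where $u\equiv t+2\pmod 4$ is the last exhausted block maximum. That piece has odd length $(u-t)/2$, so $t$ is a singleton-block maximum with multiplicity at least $\varepsilon(\Gamma_k)\ge\delta$. So the obstacle you flag, the multiplicity of $t$, is immediate; the real work is the non-interference.
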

\begin{proof}
We prove the result according to the parity of the length of $\Gamma$.

Case 1. The length of $\Gamma$ is odd. It follows from Remark \ref{remark} that $\varepsilon\geq\delta$, so $\varepsilon>r$. Thus, $\tau^r(\Gamma)$ is still a full chain, i.e., the chain $\Gamma$ was not disconnected under the operation $\tau$. Since $\ell(\Gamma)$ is odd, $t$ inhabits a block by itself in $\Gamma$, implying $t-2$ is the smallest element of $\tau^r(\Gamma)$ and lies in a block with $t$ together.

Case 2. The length of $\Gamma$ is even. In this case, $t+2$ and $t$ share a block in $\Gamma$. Clearly, $\Gamma$ will be disconnected for the first time after performing exactly $\varepsilon$ times of the operation $\tau$. The smallest element of $\tau^{\varepsilon}(\Gamma)$ is also $t$, occurring at least $\varepsilon+1$ times. If $r\leq\varepsilon$, then the result is obviously true. We next assume that $r>\varepsilon$. We claim that if $\ell(\Gamma)$ is even and $r>\varepsilon$, then $t-2$ is the smallest element of $\tau^r(\Gamma)$ and lies in a block with $t$ together. We prove by induction on $\ell(\Gamma)$. When $\ell(\Gamma)=2$, we assume that $\Gamma=(t+2^x,t^y)$ where $x\geq 1$ and $y\geq1$. Since $r>\varepsilon=x$, we see that $\tau^r(\Gamma)=(t^{2x+y-r},t-2^{r-x})$. Thus, the claim is true for $\ell(\Gamma)=2$. Let $\ell(\Gamma)\geq4$, and assume the chain decomposition of $\tau^\varepsilon(\Gamma)$ is
\[\tau^\varepsilon(\Gamma)=\Gamma_1\cup\Gamma_2\cup\cdots\cup\Gamma_k.\]
Then $\Gamma_1=\tau^{\varepsilon}(\Gamma_\varepsilon)$, which is the unique chain of even length in $\tau^\varepsilon(\Gamma)$, and the smallest element of $\Gamma_k$ is $t$. It follows from Theorem \ref{thmdelta} that each $\Gamma_i$ is $\delta$-fixed. For $1\leq i\leq k$, let $a_i$ and $b_i$ denote the largest and smallest element of $\Gamma_i$, respectively. Then $b_i=a_{i+1}+4$ for $1\leq i<k$. For $1\leq i\leq k$, let $\bar{a}_i$ and $\bar{b}_i$ denote the largest and smallest element of $\tau^{r-\varepsilon}(\Gamma_i)$, respectively. Note that $\Gamma_1$ is a shorter chain of even length. If $r-\varepsilon\leq\varepsilon(\Gamma_1)$, then $\bar{b}_1=b\geq\bar{a}_2+4$; otherwise by the induction hypothesis we know that $\bar{b}_1$ equals $b_1-2$ and lies in a block with $b_1$ together. Thus, we can always obtain that $\bar{b}_1$ will not stay in a block with $\bar{a}_2$ together. For $2\leq i\leq k$, since each $\Gamma_i$ is $\delta$-fixed and has odd length, we have $\varepsilon(\Gamma_i)\geq\delta$. Because $\delta>r>r-\varepsilon$, we get that $\bar{b}_i=b_i-2$ and stays in a block with $b_i$ in $\tau^{r-\varepsilon}(\Gamma_i)$ for $2\leq i\leq k$. Thus, $\bar{b}_i$ will not in a block with $\bar{a}_{i+1}$ for $2\leq i<k$. Therefore, the ways of grouping the elements of $\tau^{r-\varepsilon}(\Gamma_i)$ are independent. Because $t$ and $t-2$ build the last block of $\tau^{r-\varepsilon}(\Gamma_k)$ and $\tau^r(\Gamma)=\tau^{r-\varepsilon}(\tau^\varepsilon(\Gamma))$, the claim follows.
\end{proof}

We now come to an important property of $\delta$-fixed chains, which plays a very crucial role in our later proofs.

\begin{theorem}\label{thmnaff}
Assume that $\Gamma$ is a chain of $\lambda$ and all chains before $\Gamma$ are $\delta$-fixed. Then the block pattern of $\Gamma$ will not be affected by the elements before it provided that the operation $\tau$ is performed less than $\delta$ times.
\end{theorem}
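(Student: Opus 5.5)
Let $\Gamma'$ denote the chain of $\lambda$ immediately preceding $\Gamma$, and let $t$ be the largest element of $\Gamma$. The block pattern of $\Gamma$ can only be affected if, after some number $r<\delta$ of applications of $\tau$, the element $t+2$ appears and shares a block with $t$. Since $\tau$ only ever replaces a part $u$ by $u-2$ inside a block, or removes a $1$, a part $t+2$ can arise only from the element immediately above it, namely the smallest element of the preceding material. So the plan is to show that for every $r<\delta$ the smallest element of $\tau^r(\Gamma')$ is either its original smallest element $b$ or $b-2$, and that in the latter case $b-2$ is paired with $b$. Since $b\ge t+4$ by maximality of chains, this gives $b-2\ge t+2$. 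If $b-2=t+2$, the element $t+2$ is already the smaller member of the block $(b,b-2)$, so it cannot be grouped with $t$. Hence the grouping of $\Gamma$ in $\tau^r(\lambda)$ starts fresh at $t$, and the pattern is unchanged.

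The key tool is Lemma \ref{lemsp}, applied to $\Gamma'$, which is $\delta$-fixed by hypothesis. It says exactly that for $0<r<\delta$ the smallest element of $\tau^r(\Gamma')$ is $b$ or $b-2$, and that when it is $b-2$ it forms a block with $b$. Before invoking it, I will argue that the evolution of $\Gamma'$ itself is not disturbed by chains further above, so that the parts coming from $\Gamma'$ in $\tau^r(\lambda)$ really form $\tau^r(\Gamma')$. This calls for an induction over the chains from the top of $\lambda$ downward. For the first chain the statement is trivial, since nothing precedes it. Assuming that no chain before $\Gamma'$ affects $\Gamma'$ for fewer than $\delta$ steps, we get that the parts of $\tau^r(\lambda)$ coming from $\Gamma'$ are grouped exactly as in $\tau^r(\Gamma')$. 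Lemma \ref{lemsp} then controls the bottom of $\Gamma'$, which in turn shields $\Gamma$.

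I will carry out the argument as a simultaneous induction on $r$ from $1$ to $\delta-1$ and on the index of the chain. The bottom of $\Gamma'$ is closed off by a complete block $(b,b-2)$, or by $b$ sitting at least $4$ above $t$, so any block containing $t$ must take $t$ as its maximal element. Consequently the block operations performed by $\tau$ on the parts of $\Gamma$ coincide with those of $\tau$ applied to $\Gamma$ alone.

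The main obstacle is justifying that the parts coming from $\Gamma'$ evolve as $\tau^r(\Gamma')$ even though $\Gamma'$ may split into several chains, and those pieces may merge with pieces coming from earlier chains. The remedy is to apply the inductive hypothesis to the earlier chains at each step $r$: their bottoms are likewise confined to $\{b,b-2\}$ with $b-2$ paired, so no merging ever alters the grouping below them. Lemma \ref{lemsp} is precisely what guarantees this at every intermediate step $r<\delta$.
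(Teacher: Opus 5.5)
Your proposal is correct and follows essentially the same route as the paper. Both arguments apply Lemma~\ref{lemsp} to the chain immediately preceding $\Gamma$ for $0<r<\delta$, and split into two cases: either its smallest element is still $b\ge t+4$, so it cannot share a block with the top of $\Gamma$, or it is $b-2$, which is already paired with $b$. Your simultaneous induction on $r$ and on the chain index makes explicit what the paper compresses into the remark that, by Lemma~\ref{lemsp}, the groupings of the $\tau^r(\Gamma_i)$ are independent.
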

\begin{proof}
Assume that the chains before $\Gamma$ are $\Gamma_1,\Gamma_2,\ldots,\Gamma_k$, and $0<r<\delta$. Suppose that the smallest element of $\Gamma_k$ is $a$, and the largest element of $\Gamma$ is $b$. Clearly, $a\geq b+4$. It follows from Lemma \ref{lemsp} that the ways of grouping the elements of $\tau^r(\Gamma_i)$ are independent.

Case 1. If the smallest element of $\tau^r(\Gamma_k)$ is $a$, then $a$ cannot be in a block with the largest element of $\tau^r(\Gamma)$ in $\tau^r(\lambda)$ because $a\geq b+4$.

Case 2. If the smallest element of $\tau^r(\Gamma_k)$ is $a-2$, then $a$ and $a-2$ will stay in a block of $\tau^r(\Gamma_k)$ according to Lemma \ref{lemsp}. So do in $\tau^r(\lambda)$.

Thus the block pattern of $\Gamma$ will not be affected by the elements before it.
\end{proof}

\section{Upper bound}\label{secub}

\begin{lemma}\label{thmfirstchain}
Let $\lambda$ be an odd partition whose largest part occurs exactly $r$ times. If the first chain of $\tau^r(\lambda)$ does not contain $1$'s, then its length must be odd.
\end{lemma}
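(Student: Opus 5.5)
Let $\Gamma$ be the first chain of $\lambda$ and write it in block form. By hypothesis its largest element $t$ occurs exactly $r$ times, and $t$ is the maximal element of the first block. Hence $\varepsilon(\Gamma)\le r$.

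The plan is to follow the first block of $\lambda$ through the first $r$ applications of $\tau$. No chain precedes $\Gamma$, so the block pattern at the top of $\lambda$ is never influenced from above. After $i<r$ steps the first block is still $(t^{r-i},(t-2)^{\ast})$, since the largest part $t$ survives. After exactly $r$ steps every copy of $t$ has become $t-2$. So the largest part of $\tau^r(\lambda)$ is $t-2$, and the first chain $\Gamma^{\ast}$ of $\tau^r(\lambda)$ has $t-2$ as its top. (If the first block is $(1^r)$, then $\tau^r(\lambda)$ is empty and there is nothing to prove, so we may assume $t\ge3$.) The task is to show that the number of distinct parts of $\Gamma^{\ast}$ is odd.

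To locate where $\Gamma^{\ast}$ ends, I use the description from Section \ref{secchain}: the first disconnection of a chain occurs at the first block whose maximal part has multiplicity exactly $\varepsilon$. I would induct on $r$ (equivalently, apply $\tau$ one step at a time). In $\Gamma$, the maximal parts of the blocks are $t, t-4, t-8,\dots$. After $r$ steps each block $(u^a,(u-2)^b)$ that lies in a still-connected region has become $(u^{a-r},(u-2)^{b+r})$ if $a\ge r$. The top block becomes $((t-2)^{b+r})$ with $t$ gone. Within the surviving connected part, the elements present are then $t-2$ together with $u, u-2$ for every subsequent block. The chain $\Gamma^{\ast}$ stops at the first position where some element has vanished, meaning its multiplicity reached $0$.

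The key point is that elements of the form $u-2$ (the second element of a block) only gain multiplicity under $\tau$. So vanishing can only happen at a maximal element $u$ of a block, or because a chain break already present in $\lambda$ (a gap) is reached. In either case, the list of present consecutive elements runs $t-2,\ t-4,\ t-6,\dots$ and ends right at some element $u'-2$, where $u'$ is a block maximum. The elements are $t-2$ followed by complete pairs $(u,u-2)$. Their count is therefore $1+2m$, which is odd. The ending at a terminal $u'-2$ uses the hypothesis that the chain contains no $1$'s: if $1$'s were present, the last block $(1^a)$ has weight $1$, and a $1$ could be removed or created by blocks $(3,1^\ast)$, spoiling the parity.

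The main obstacle is that $\tau$ is defined using the block grouping of the current partition, which can shift partway through the $r$ steps. When a block maximum vanishes at an intermediate step $i<r$, the part of the chain below it is regrouped. It may then also merge with a later chain of $\lambda$, as in Theorem \ref{thmdelta}. I would handle this by noting that only the portion of the chain above the first vanishing point matters. That portion's grouping is fixed for all $r$ steps, because it is a prefix starting at the top, and grouping proceeds from the largest part. Changes below it cannot reach upward. The one remaining check is that $t-2$ is never separated from what lies beneath it; this holds because $(t-2)$'s multiplicity only grows. Parity is then preserved.
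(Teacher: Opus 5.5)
Your stable-prefix idea is the right one, but you run it only on the blocks of the first chain $\Gamma$ of $\lambda$, and you let a chain break already present in $\lambda$ end $\Gamma^{\ast}$. That step fails.

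If $\Gamma$ has odd length, its last block is a singleton $(e^{c})$, and the very first application of $\tau$ puts $e-2$ into the gap. If the next chain of $\lambda$ begins at $e-4$, the two chains fuse. This happens inside the region whose grouping never changes, not below a vanishing point as your last paragraph assumes. For example, $\lambda=(9,5^5,3)$ has $r=1$ and $\Gamma=(9)$, so your analysis gives $\Gamma^{\ast}=(7)$. In fact $\tau(\lambda)=(7,5^4,3^2)$ is a single chain of length $3$. Likewise $\tau(13,9^5,5^5,3)=(11,9^4,7,5^4,3^2)$, whose first chain is assembled from three chains of $\lambda$. Handling this fusion is precisely what the paper's proof does: it writes the first chain of $\tau^r(\lambda)$ as $\Gamma_1^r\cup\cdots\cup\Gamma_j^r$, with $\Gamma_1^r$ of odd length and every later piece of even length.

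The repair stays inside your method.
Apply it to the top-down block sequence $B_1=(t^r,(t-2)^{b_1}),B_2,B_3,\ldots$ of all of $\lambda$, where $B_i$ has maximum $u_i$ of multiplicity $a_i$. Let $p\ge 2$ be the first index with $a_p\le r$ (or one past the last block if there is none).
For $1\le s<r$ the blocks $B_1,\ldots,B_{p-1}$ keep their pattern in $\tau^s(\lambda)$. A singleton $(u_i^{a_i})$ simply becomes the pair $(u_i^{a_i-s},(u_i-2)^{s})$, and the next block still starts at $u_{i+1}$, whether $u_{i+1}=u_i-4$ or smaller.
Meanwhile $u_p$ disappears by step $a_p\le r$ and is never recreated, since $u_p+2$ is never a block maximum.
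Hence the first chain of $\tau^r(\lambda)$ is $t-2$ followed by the full pairs $(u_i,u_i-2)$ for $2\le i\le q$. Here $q\le p-1$ is maximal with $u_{i+1}=u_i-4$ for all $i<q$, so the length is $2q-1$. The hypothesis on $1$'s is needed only because a weight-$1$ block $(1^{a})$ in this prefix would contribute a single element instead of a pair.

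Note also that the cut-off must be this topmost maximum of multiplicity at most $r$, not the block of multiplicity $\varepsilon$ you quote from Section~\ref{secchain}. For $\lambda=(13^3,11,9^2,7,5,3)$ the first disconnection occurs at $5$, yet $9$ also vanishes before step $3$, and $\tau^3(\lambda)=(11^4,7^2,5,1^2)$ has first chain $(11^4)$. The same example shows that $t-2$ can be cut off from what lies beneath it, contrary to your final check, though that check is not needed.
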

\begin{proof}
Suppose that the chain decomposition of $\lambda$ is $\Gamma_1\cup\Gamma_2\cup\cdots\cup\Gamma_k$. For $1\leq i\leq k$, we use ${\Gamma}_i^r$ to denote the first chain of $\tau^r(\Gamma_i)$.

There are two cases to discuss.

If $k>1$, then $\ell(\Gamma^r_1)$ is odd. The first chain of $\tau^r(\lambda)$ must be $\Gamma^r_1\cup\Gamma^r_2\cup\cdots\cup\Gamma^r_j$ for some $j\geq1$. Each $\ell(\Gamma^r_i)$ ($2\leq i\leq j$) must be even otherwise the largest part of $\Gamma_i$ does not appear in $\Gamma^r_i$, contradicting to that $\Gamma^r_1\cup\Gamma^r_2\cup\cdots\cup\Gamma^r_i$ is a chain. Thus, the first chain of $\tau^r(\lambda)$ has odd length.

If $k=1$, then $\lambda$ has only one chain, i.e., $\lambda=\Gamma_1$. So the first chain of $\tau^r(\lambda)$ is $\Gamma^r_1$. If $\lambda$ has no part of size $1$, then $\ell(\Gamma^r_1)$ is odd. We next suppose that $\lambda$ has parts of size $1$. If $\Gamma_1$ will be destroyed during the process of performing the operation $\tau$, then $\Gamma^r_1$ has odd length; otherwise $\Gamma^r_1$ is a chain containing $1$'s. In this subcase, $\ell(\Gamma^r_1)$ may be even.

Thus, the length of the first chain of $\tau^r(\lambda)$ may be even only when it contains $1$'s.
\end{proof}

\begin{lemma}\label{thmkeep}
Assume that $\lambda$ is an odd partition with chain decomposition $\lambda=\Gamma_1\cup\Gamma_2\cup\cdots\cup\Gamma_k$ where $\ell(\Gamma_j)$ is odd, $\varepsilon(\Gamma_j)=\delta$ and each $\Gamma_i$ is $\delta$-fixed for $1\leq i<j $. Letting $s$ be the smallest element of $\Gamma_j$, then the chain containing $s-2$ in $\tau^\delta(\lambda)$ has multiplicity of exactly $\delta$, and has odd length if it does not contain $1$'s. Furthermore, the chains before it in $\tau^\delta(\lambda)$ are still $\delta$-fixed.
\end{lemma}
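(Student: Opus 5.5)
The plan is to track the chain $\Gamma_j$ through $\delta$ applications of $\tau$, and to use Theorem \ref{thmnaff} to make sure the chains in front of it do not interfere until the very last step. Since $\Gamma_1,\ldots,\Gamma_{j-1}$ are $\delta$-fixed, Theorem \ref{thmnaff} shows the block pattern of $\Gamma_j$ is not affected by the elements before it for the first $\delta-1$ applications of $\tau$. So for $r<\delta$ we may compute $\tau^r(\Gamma_j)$ in isolation.

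Since $\ell(\Gamma_j)$ is odd, its smallest element $s$ sits alone in the last block $(s,(s-2)^0)$. Also $\varepsilon(\Gamma_j)=\delta$, so $\tau^r(\Gamma_j)$ stays a single chain for $r<\delta$. In it, each block's maximal part loses $r$ copies, and $s-2$ gains $r$ copies.

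At step $\delta$, by the same argument the first block of $\Gamma_j$ whose maximal part has multiplicity exactly $\delta$ is emptied of its top. This cuts off $\tau^\delta((\Gamma_j)_\varepsilon)$, which has even length. The remainder is a chain ending in $(s,s-2)$-type blocks with $s-2$ occurring $\delta$ times. Each remaining block's maximal element has multiplicity $\ge\delta$, and the new last block $(s-2)^\delta$ (together with further blocks lying below) gives multiplicity exactly $\delta$ on the odd-length piece, exactly as in Remark \ref{remark}.

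Next I would examine what sits below $s-2$. The chain containing $s-2$ in $\tau^\delta(\lambda)$ may absorb the first chain of $\tau^\delta(\Gamma_{j+1}\cup\cdots)$ if the gap closes. Here Lemma \ref{thmfirstchain} (applied to $\Gamma_{j+1}\cup\cdots\cup\Gamma_k$ with the appropriate number of steps) will show that, unless $1$'s appear, the absorbed piece has even length. It therefore preserves the odd length of the chain, and it contributes maximal parts of multiplicity at least $\delta$ in its blocks, since its own block pattern is untouched. Hence the multiplicity stays exactly $\delta$ (attained at $s-2$), and the length stays odd when there are no $1$'s.

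For the chains before it, Theorem \ref{thmdelta}, applied to $\Gamma_1\cup\cdots\cup\Gamma_{j-1}$ viewed as a partition, gives that their images' chains are $\delta$-fixed. One must also check that the cut-off piece $\tau^\delta((\Gamma_j)_\varepsilon)$, possibly merged with the last of those chains, is $\delta$-fixed. Here I would use that $(\Gamma_j)_\varepsilon$ has multiplicity $>\delta$ and therefore is $\delta$-fixed by Theorem \ref{lemmulfix}, together with Theorem \ref{lemfixtau}, and Lemma \ref{lemtauchai} for any merger.

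The main obstacle is the interaction at step $\delta$ exactly. There, Theorem \ref{thmnaff} no longer applies, and the chain $\Gamma_{j-1}$ above may merge with $\tau^\delta(\Gamma_j)$, altering block patterns. I would handle it using Lemma \ref{lemsp} to locate the smallest element of $\tau^{\delta-1}(\Gamma_{j-1})$, then argue case by case via Theorem \ref{Corchmer}.
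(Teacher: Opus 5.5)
Your plan follows the paper's route: Theorem~\ref{thmnaff} for the first $\delta-1$ steps, an explicit description of the piece of $\tau^\delta(\Gamma_j)$ containing $s-2$, then what may be absorbed from below, and Theorem~\ref{thmdelta} for the earlier chains. However, two steps fail as written.

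\textbf{Where the chain is cut.} At the $\delta$-th step \emph{every} block maximum of $\Gamma_j$ with exactly $\delta$ copies vanishes, not only the first one. So what remains after cutting off $\tau^\delta((\Gamma_j)_\varepsilon)$ is in general not a chain. For $\Gamma_j=(13^\delta,11)(9^\delta,7)(5^{\delta+1})$ one has $(\Gamma_j)_\varepsilon=\emptyset$, yet $\tau^\delta(\Gamma_j)=(11^{\delta+1})\cup(7^{\delta+1},5,3^\delta)$. Before any absorption from below, the chain containing $s-2$ begins just below the \emph{last} such maximum $r$ (the paper's $\kappa(\Gamma_j)$). It is $(r-2^{>\delta},r-4^{\geq1})\cdots(s+2^{>\delta},s^{\geq1})(s-2^{=\delta})$. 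The odd-length pieces split off in between must be added to the ``chains before it''; they have multiplicity at least $\delta+1$, so Theorem~\ref{lemmulfix} covers them. Since $r$ disappears while $r-2$ survives, the chain containing $s-2$ is separated from everything above it. So the interaction with $\tau^\delta(\Gamma_{j-1})$ that you call the main obstacle does not affect the multiplicity or parity claims. It only bears on $\delta$-fixedness of earlier chains, which the paper takes from Theorem~\ref{thmdelta}.

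\textbf{The absorption step (the genuine gap).} Lemma~\ref{thmfirstchain} does not give what you want. It concerns $\tau^x$ with $x$ the multiplicity of the largest part, and it asserts \emph{odd} length. Here $s-4$ survives, so its multiplicity $x$ exceeds $\delta$, and you need the state after $\delta<x$ steps.

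The multiplicity bound also cannot come from the absorbed piece's ``own block pattern''. Once it sits after $(s-2^{=\delta})$ it is regrouped with a shift: $s-2$ pairs with $s-4$, so the new block maxima are $s-6,s-10,\ldots$, while its own maxima can be small. For $\delta=2$ and $\lambda=(17^2,15,13^3,9^3,7,5)$, the chain of $\tau^2(\lambda)$ containing $11$ is $(15^3,13)(11^2,9)(7^3)$, and $s-4=9$ has a single copy.

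The missing idea, which is the core of the paper's proof, is a direct count. During the first $\delta-1$ steps $\Gamma_j$ keeps $s-2$ as the smaller member of the block $(s,s-2)$. Hence each $s-4-4i$ lying in the absorbed piece never gains copies, because the element just above it is always the smaller member of a block. It was therefore present, and a block maximum, at every step. Consequently its partner $s-6-4i$ gained one copy per application of $\tau$, is present, and occurs at least $\delta$ times. This gives two things at once:
\begin{itemize}
\item the absorbed piece must end at such a partner, so it has even length unless $1$'s intervene;
\item in the shifted grouping every block maximum after $s-2$ occurs at least $\delta$ times.
\end{itemize}
Together these yield multiplicity exactly $\delta$ and, when no $1$'s occur, odd length.
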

\begin{proof}
Since $\varepsilon(\Gamma_j)=\delta$ and $\ell(\Gamma_j)$ is odd, we know that $\Gamma_j$ is $\delta$-fixed according to Theorem \ref{lemmulfix}. It follows from Theorem \ref{thmnaff} that the block pattern of $\Gamma_j$ will not be affected before we perform the $\delta$-th operation of $\tau$. Suppose that the chain $\Gamma_j$ is of the following form
\begin{align*}
\Gamma_j=\cdots(r+4^{\geq \delta},r+2^{\geq1})(r^{=\delta},r-2^{\geq1})(r-4^{>\delta},r-6^{\geq1})
\cdots(s+4^{>\delta},s+2^{\geq1})(s^{>\delta}),
\end{align*}
where $r$ is the last element of $\Gamma_j$ as the maximal element of a block and occurring exactly $\delta$ times. Clearly, $\Gamma_j$ will be disconnected in the location of $r$. From Theorem \ref{thmdelta}, we know that the elements before $r$ will form new $\delta$-fixed chains in $\tau^{\delta}(\lambda)$. The remaining segment of $\Gamma_j$ will become
\begin{align*}
\overline{\Gamma}_j&=(r^{=0},r-2^{>\delta})(r-4^{>0},r-6^{>\delta})
\cdots(s+4^{>0},s+2^{>\delta})(s^{>0},s-2^{=\delta}),\\
&=(r-2^{>\delta},r-4^{\geq1})(r-6^{>\delta},r-8^{\geq1})
\cdots(s+6^{>\delta},s+4^{\geq1})(s+2^{>\delta},s^{\geq1})(s-2^{=\delta}),
\end{align*}
in $\tau^{\delta}(\lambda)$.

If $s-4$ does not appear in $\tau^\delta(\lambda)$, then $\overline{\Gamma}_j$ is the chain containing $s-2$, whose length is odd and multiplicity is exactly $\delta$.

If $s-4\neq 1$ and appears in $\tau^\delta(\lambda)$, which implies that $s-4$ must be the largest element of $\Gamma_{j+1}$, then $s-6$ also appears in $\tau^\delta(\lambda)$ and appears at least $\delta$ times. Similarly, if $s-8\neq1$ and appears, then $s-10$ appears at least $\delta$ times. Let $\overline{\Gamma}_{j+1}$ be the subchain beginning with $s-4$ in $\tau^\delta(\lambda)$. Let $t$ be the last element of $\overline{\Gamma}_{j+1}$. Then the chain of $\tau^\delta(\lambda)$ containing $s-2$ is $\overline{\Gamma}_j\cup\overline{\Gamma}_{j+1}$. If $t\neq1$, then $\ell(\overline{\Gamma}_{j+1})$ is even. In this case,
$\overline{\Gamma}_j\cup\overline{\Gamma}_{j+1}$ can be written as
\begin{align*}
\cdots(s+2^{>\delta},s^{\geq1})(s-2^{=\delta},s-4^{\geq1})(s-6^{\geq \delta},s-8^{\geq1})\cdots(t+4^{\geq \delta},t+2^{\geq1})(t^{\geq \delta}),
\end{align*}
whose length is odd and multiplicity is $\delta$.
\end{proof}

For a chain $\Gamma$, we use $\kappa(\Gamma)$ to denote the last element of $\Gamma$ which is the maximal element of the block containing it and occurs exactly $\varepsilon(\Gamma)$ times in $\Gamma$.

\begin{theorem}\label{thmflhleq}
If the odd partition $\lambda$ with largest part $\lambda_1$ corresponds to the distinct partition $\pi$ with lecture hall length $\ell_h(\pi)$ under our bijection $\varphi$, then we have $(\lambda_1+1)/2\leq\ell_h(\pi)$.
\end{theorem}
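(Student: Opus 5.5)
The plan is to argue by induction on the largest part $\lambda_1=2N-1$, using the formula of Theorem \ref{thmellh}. Since
\[\ell_h(\pi)=\max_{1\leq i\leq k}\bigl\{\lceil \pi_i/m_i\rceil+i-1\bigr\},\]
it suffices to exhibit one index $j$ with $\pi_j>(N-j)\,m_j$. The base case $N=1$ is trivial: then $\lambda=(1^a)$, so $\pi=(a)$ and $\ell_h(\pi)=1$.

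For the inductive step, let $r$ be the multiplicity of the largest part $2N-1$ and put $\mu=\tau^r(\lambda)$. The largest part of $\mu$ is $2N-3$, since the first block $(2N-1^{r},2N-3^{\cdot})$ moves all its copies of $2N-1$ down. The key structural fact is that $\varphi(\tau(\lambda))$ is obtained from $\pi$ by deleting the first column of its Ferrers graph, which has length $s_1=b_o(\lambda)$. Hence $\sigma=\varphi(\mu)$ is $\pi$ with its first $r$ columns deleted. In the notation $\mathbf{s}=(1^{m_1},\ldots,k^{m_k})$, the entries $s_1\geq\cdots\geq s_r$ are removed. By induction, $\ell_h(\sigma)\geq N-1$. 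Let $j$ be the lecture hall position of $\sigma$, so that $\sigma_j=(\ell_h(\sigma)-j)m_j^{\sigma}+r_j$ with $1\leq r_j\leq m_j^{\sigma}$.

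I then want to show that this same $j$ certifies $\ell_h(\pi)\geq \ell_h(\sigma)+1\geq N$, or else that $\ell_h(\pi)\geq N$ holds directly (for instance when $\ell_h(\sigma)\geq N$ already). Passing from $\sigma$ to $\pi$ adds $r$ to $\pi_j$ as long as $j\leq s_r$. It also increases $m_j$ only by the number of the first $r$ weights equal to $j$. Thus the needed inequality $\pi_j>(\ell_h(\sigma)+1-j)m_j$ reduces to a comparison between $r$ and the multiplicity $m_j^{\sigma}$. Theorem \ref{thmpm}, which gives $m_j<\min\{m_1,\dots,m_{j-1}\}$ and $m_j\leq m_{j+1}$ (together with Remark \ref{remmj} for the case $\ell_h=k$), is the tool for controlling $m_j^{\sigma}$. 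Theorem \ref{thms} gives the inequalities $\pi_i\geq\pi_j+(j-i)m_j$ needed when the position shifts.

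The combinatorial input linking $r$ to $m_j^{\sigma}$ comes from the chain analysis. By Lemma \ref{thmfirstchain}, the first chain of $\tau^r(\lambda)$, if it avoids $1$'s, has odd length. I will take $\delta$ to be the least multiplicity $\varepsilon$ among the relevant chains of $\lambda$, locate the first odd chain $\Gamma_j$ with $\varepsilon(\Gamma_j)=\delta$, and invoke Lemma \ref{thmkeep}. That lemma says that after $\delta$ steps the chain through $s-2$ again has multiplicity exactly $\delta$, and that all earlier chains stay $\delta$-fixed. By Theorem \ref{thmnaff}, during fewer than $\delta$ applications of $\tau$ the block pattern below the $\delta$-fixed prefix is unaffected, so $b_o$ is constant on those stretches. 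This should show two things: the weight $b_o$ stays $\geq j$ throughout the first $r$ steps, and a run of equal weights of length at most $r$ (essentially $\delta\leq r$) produces $m_j$. Together these yield $r\geq m_j^{\pi}$ in the form needed.

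The main obstacle is exactly this matching. I must identify which weight level $j$ of $\mathbf{s}$ is the lecture hall position and prove that the multiplicity $m_j$ equals a chain multiplicity $\delta$ bounded by $r$. This requires tracking, through Theorem \ref{thmdelta} and Corollary \ref{corch}, that every chain produced along the way stays $\delta$-fixed, so that the constant-weight stretches of $\mathbf{s}$ have length governed by $\delta$. My first attempt would be to define $\delta$ via $\kappa(\Gamma)$ for the first chain and iterate Lemma \ref{thmkeep} level by level. That would give a direct count: at least $N-j$ full runs of length $m_j$ occur before the weight drops below $j$, which is precisely $\pi_j>(N-j)m_j$.
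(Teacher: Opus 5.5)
\textbf{There is a genuine gap in the inductive step.} Write $\sigma_j=(\ell_h(\sigma)-j)m_j^{\sigma}+r_j$ with $1\le r_j\le m_j^{\sigma}$, as you do. Take even the favourable case where all $r$ deleted weights exceed $j$, so that $m_j^{\pi}=m_j^{\sigma}$ and $\pi_j=\sigma_j+r$. Then the target $\pi_j>(\ell_h(\sigma)+1-j)m_j$ is equivalent to $r+r_j>m_j$. It is not a comparison of $r$ with $m_j$ alone. The residue $r_j$ is exactly what the hypothesis $\ell_h(\sigma)\ge N-1$ tells you nothing about.

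You try to bypass this by showing $r\ge m_j$ (``essentially $\delta\le r$''), but that is false. In the paper's own example $\lambda=(31^2,29^3,27^5,\dots)$ one has $r=2$. There $\sigma$ is $\pi$ minus two columns of height $14$, namely $\sigma=(156,145,98,88,79,68,60,50,36,30,25,13,7,2)$. Its lecture hall length $\ell_h(\sigma)=15$ is attained at $i=1$ and $i=10$, with residues $2$ and $5$. So its lecture hall position is $j=10$, with $m_{10}=\delta=5>r$. The step succeeds there only because $\sigma_{10}=30$ is an exact multiple of $m_{10}$, i.e.\ $r_{10}=m_{10}$. Controlling such residues in general is essentially the content of Conjecture~\ref{conj}, which the paper leaves open. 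So the induction as you set it up does not close.

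\textbf{What the paper does instead.} It avoids both the induction and any need to locate the lecture hall position, and exhibits one suitable index directly.
\begin{enumerate}
\item Set $\Phi_0=\tau^r(\lambda)$ and let $\delta=\varepsilon(\Upsilon_0)$ for its first chain $\Upsilon_0$.
\item Iterate $\Phi_{i+1}=\tau^{\delta}(\Phi_i)$, following the chain through $b_i-2$. Lemma~\ref{thmkeep} and Theorem~\ref{thmnaff} keep that chain's multiplicity exactly $\delta$ and keep the earlier chains $\delta$-fixed. Stop when the tracked chain $\Upsilon_p$ contains $1$'s.
\item Show that $b_o(\Phi_p)=j$ and $b_o(\Phi_{p+1})=j-1$. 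Hence $m_j=\delta$ and $\pi_j=r+(p+1)\delta$.
\item A block count, \eqref{eqb1} together with \eqref{eqb2}, gives $N=j+p+1$.
\item Therefore $\lceil\pi_j/m_j\rceil+j-1\ge p+2+j-1=N$.
\end{enumerate}

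Your closing sentence gestures at this route, but two points matter. First, the strict inequality comes from the initial $r\ge 1$ steps, which supply a positive remainder; it does not come from $r\ge\delta$. ``At least $N-j$ full runs'' alone only yields $\pi_j\ge(N-j)m_j$. Second, the real work is the identity $N=j+p+1$, which links the largest part of $\lambda$ to the level $j$ at which the weight drops. Your proposal does not address this.
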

\begin{proof}
Suppose that the largest part of $\lambda$ is $2t-1$ and occurs exactly $r$ times in $\lambda$.

Set $\Phi_0=\tau^r(\lambda)$ and let $\Upsilon_0$ denote the first chain of $\Phi_0$. Suppose that $\varepsilon(\Upsilon_0)=\delta$, and there are $a_0$ blocks before $\kappa(\Upsilon_0)$ in $\Upsilon_0$. Then
$\kappa(\Upsilon_0)=2t-4a_0-3$ since the first element of $\Phi_0$ is clearly $2t-3$. Assume that the last element of $\Upsilon_0$ is $b_0$. Note that if $b_0\neq\kappa(\Upsilon_0)$, then it occurs more than $\delta$ times in $\Phi_0$.

If $\Upsilon_0$ does not contain $1$'s, then its length is odd due to Lemma \ref{thmfirstchain}. Set $\Phi_1=\tau^\delta(\Phi_0)$. Now we see that $b_0-2$ occurs exactly $\delta$ times in $\Phi_1$, and becomes the maximal element of some block because $2t-4a_0-3$ disappears in $\Phi_1$. Let $\Upsilon_1$ denote the chain of $\Phi_1$ containing the element $b_0-2$, the first element of which must be $2t-4a_0-5$. Lemma \ref{thmkeep} tells us that $\varepsilon(\Upsilon_1)=\delta$ and the chains before $\Upsilon_1$ are $\delta$-fixed. Assume that there are $a_1$ blocks before $\kappa(\Upsilon_1)$ in $\Upsilon_1$. Then $\kappa(\Upsilon_1)=2t-4a_0-4a_1-5$. Assume that the last element of $\Upsilon_1$ is $b_1$. Note that $b_1$ (if $b_1\neq\kappa(\Upsilon_1)$) occurs more than $\delta$ times in $\Phi_1$.

If $\Upsilon_1$ does not contain $1$'s, then $\ell(\Upsilon_1)$ is odd according to Lemma \ref{thmkeep}. Set $\Phi_2=\tau^{\delta}(\Phi_1)$. Because the chains before $\Upsilon_1$ are $\delta$-fixed, Theorem \ref{thmnaff} tells us that the block pattern of $\Upsilon_1$ will not be affected during the process of producing $\Phi_2$. Thus, $b_1-2$ occurs exactly $\delta$ times in $\Phi_2$, and becomes the maximal element of some block because $2t-4a_0-4a_1-5$ disappears in $\Phi_2$. Let $\Upsilon_2$ denote the chain of $\Phi_2$ containing $b_1-2$, the first element of which must be $2t-4a_0-4a_1-7$. According to Lemma \ref{thmkeep}, we know that $\varepsilon(\Upsilon_2)=\delta$ and the chains before $\Upsilon_2$ are $\delta$-fixed. Assume that there are $a_2$ blocks before $\kappa(\Upsilon_2)$ in $\Upsilon_2$. Then we have $\kappa(\Upsilon_2)=2t-4a_0-4a_1-4a_2-7$. Assume that the last element of $\Upsilon_2$ is $b_2$. Note that $b_2$ (if $b_2\neq\kappa(\Upsilon_2)$) occurs more than $\delta$ times in $\Phi_2$.

If $\Upsilon_2$ does not contain $1$'s, then $\ell(\Upsilon_2)$ is odd according to Lemma \ref{thmkeep}. We continue the above process until eventually encountering the case that $\Upsilon_p$ is a chain containing $1$'s.

We produce two sequences, one of which is formed by partitions $(\Phi_0,\Phi_1,\ldots,\Phi_{p-1},\Phi_p)$ and the other is formed by chains $(\Upsilon_0,\Upsilon_1,\ldots,\Upsilon_{p-1},\Upsilon_p)$ satisfying  $\Phi_{i+1}=\tau^\delta(\Phi_i)$ and $\Upsilon_{i+1}$ is the chain of $\Phi_{i+1}$ containing the element $b_i-2$ where $b_i$ is the last element of $\Upsilon_{i}$.

We now come back to analyze $\Upsilon_{p-1}$. We know that $\Upsilon_{p-1}$ is the chain of $\Phi_{p-1}$ containing the element $b_{p-2}-2$, where $b_{p-2}$ is the last element of $\Upsilon_{p-2}$. The first element of $\Upsilon_{p-1}$ is
\[2t-4\sum\limits_{i=0}^{p-2}a_i-(2p+1).\]
Owing to Lemma \ref{thmkeep}, we know that $\varepsilon(\Upsilon_{p-1})=\delta$, the length of $\Upsilon_{p-1}$ is odd, and the chains before $\Upsilon_{p-1}$ are $\delta$-fixed. Since there are $a_{p-1}$ blocks before $\kappa(\Upsilon_{p-1})$ in $\Upsilon_{p-1}$, we obtain
\[\kappa(\Upsilon_{p-1})=2t-4\sum\limits_{i=0}^{p-1}a_i-(2p+1).\]
Note that $b_{p-1}$, the last element of $\Upsilon_{p-1}$, occurs more than $\delta$ times in $\Phi_{p-1}$ if $b_{p-1}\neq\kappa(\Upsilon_{p-1})$. So $b_{p-1}-2$ occurs exactly $\delta$ times in $\Phi_{p}$.

The chain $\Upsilon_p$ we previously encountered is indeed the chain of $\Phi_p$ containing $b_{p-1}-2$. We now assume that
\begin{align*}
\Upsilon_{p-1}=\cdots
(\kappa(\Upsilon_{p-1})^{=\delta},\kappa(\Upsilon_{p-1})-2^{\geq1})(\kappa(\Upsilon_{p-1})-4^{>\delta},\kappa(\Upsilon_{p-1})-6^{\geq1})\cdots
(b_{p-1}^{>\delta}).
\end{align*}
To ensure that $1$ appears in $\Upsilon_p$, the chain $\Upsilon_{p-1}$ must be one the following two cases.

Case 1. If $b_{p-1}=3$, then
\begin{align*}
\Upsilon_{p}=(\kappa(\Upsilon_{p-1})-2^{>\delta},\kappa(\Upsilon_{p-1})-4^{\geq1})\cdots(5^{>\delta},3^{\geq1})(1^\delta).
\end{align*}

Case 2. If $b_{p-1}>3$, the block pattern of the parts smaller than $b_{p-1}-2$ in $\Phi_{p-1}$ must be either
\begin{align*}
(b_{p-1}-4^{>\delta},b_{p-1}-6^{\geq 0})(b_{p-1}-8^{>\delta},b_{p-1}-10^{\geq 0})\cdots(3^{>\delta},1^{\geq0})
\end{align*}
or
\begin{align*}
(b_{p-1}-4^{>\delta},b_{p-1}-6^{\geq 0})(b_{p-1}-8^{>\delta},b_{p-1}-10^{\geq 0})\cdots(5^{>\delta},3^{\geq0})(1^{>\delta}).
\end{align*}
Correspondingly, we have
\begin{align*}
\Upsilon_p=(\kappa(\Upsilon_{p-1})-2^{>\delta},\kappa(\Upsilon_{p-1})-4^{\geq1})\cdots(b_{p-1}-2^{=\delta},b_{p-1}-4^{\geq 1})\cdots(5^{\geq\delta},3^{\geq 1})(1^{\geq\delta})
\end{align*}
or
\begin{align*}
\Upsilon_p=(\kappa(\Upsilon_{p-1})-2^{>\delta},\kappa(\Upsilon_{p-1})-4^{\geq1})\cdots(b_{p-1}-2^{=\delta},b_{p-1}-4^{\geq 1})\cdots(7^{\geq\delta},5^{\geq 1})(3^{\geq\delta},1^{\geq 1}).
\end{align*}

We now see that the first element of $\Upsilon_p$ is
\[\kappa(\Upsilon_{p-1})-2=2t-4\sum\limits_{i=0}^{p-1}a_i-(2p+3).\]
Assume that there are $a_p$ blocks before $b_{p-1}-2$ in $\Upsilon_p$. Then
\begin{align}\label{eqb1}
b_{p-1}-2=2t-4\sum\limits_{i=0}^{p}a_i-(2p+3).
\end{align}

Suppose that $b_o(\Phi_{p-1})=j+1$. It is easy to see that each chain of $\Phi_{p-1}$ is $\delta$-fixed, implying the block odd index of $\Phi_{p-1}$ remains the same if the operation $\tau$ was performed less than $\delta$ times. Namely,
\begin{align*}
b_o(\Phi_{p-1})=b_o(\tau(\Phi_{p-1}))=b_o(\tau^2(\Phi_{p-1}))=\cdots=b_o(\tau^{\delta-1}(\Phi_{p-1})).
\end{align*}
It is clear that $\Upsilon_{p-1}$ will be disconnected in the location of $\kappa(\Upsilon_{p-1})$ in $\tau^\delta(\Phi_{p-1})$, causing the block pattern for the parts less than it to be changed. The above two cases of $\Upsilon_{p-1}$ implies that the parity of $b_o(\Phi_p)$ and $b_o(\Phi_{p-1})$ is different. Because $b_o(\tau^{\delta-1}(\Phi_{p-1}))-1\leq b_o(\Phi_p)\leq b_o(\tau^{\delta-1}(\Phi_{p-1}))$, we conclude that $b_o(\Phi_p)=j$.

We now consider $\Phi_{p+1}=\tau^\delta(\Phi_p)$. Similarly, since each chain of $\Phi_p$ is $\delta$-fixed, we see that
\begin{align*}
b_o(\Phi_{p})=b_o(\tau(\Phi_{p}))=b_o(\tau^2(\Phi_{p}))=\cdots=b_o(\tau^{\delta-1}(\Phi_{p})).
\end{align*}
and $b_o(\tau^{\delta-1}(\Phi_{p}))-1\leq b_o(\Phi_{p+1})\leq b_o(\tau^{\delta-1}(\Phi_{p}))$. Because $\Upsilon_p$ will be disconnected in the location of $b_{p-1}-2$, affecting the block pattern behind it, we obtain $b_o(\Phi_{p+1})=b_o(\Phi_{p})-1=j-1$. We now conclude that as a weight, $j$ occurs exactly $\delta$ times in the gap-free sequence $\mathbf{s}$ associated with $\lambda$.

The block pattern of $\Upsilon_p$ is
\[\cdots(b_{p-1}+2^{>\delta},{b_{p-1}}^{\geq 1})(b_{p-1}-2^{=\delta},b_{p-1}-4^{\geq 1})(b_{p-1}-6^{\geq\delta},b_{p-1}-8^{\geq 1})\cdots.\]
Since the chains involved in the previous steps are always $\delta$-fixed, the sum of the numbers of their blocks will not be changed. Thus, there are $a_0+a_1+\cdots+a_p$ blocks before $b_{p-1}-2$ in $\Phi_p$. We deduce that the block odd index of $\Phi_p$ is
\begin{align}\label{eqb2}
j=2\sum\limits_{i=0}^pa_i+\frac{b_{p-1}-2+1}{2}=2\sum\limits_{i=0}^pa_i+\frac{b_{p-1}-1}{2}
\end{align}
Combining \eqref{eqb1} and \eqref{eqb2} together, we arrive at
\begin{align}\label{eqodd}
t=j+p+1.
\end{align}

To produce $\Phi_{p+1}$, we perform totally $(p+1)\delta+r$ times of the operation $\tau$. Put another way, $\tau$ has been performed $(p+1)\delta+r$ times in total before we get the weight $j-1$ in the gap-free sequence $\mathbf{s}$. Thus, $\pi_j=(p+1)\delta+r$. In addition, the multiplicity of $j$ in $\mathbf{s}$ is $\delta$, i.e., $\pi_j-\pi_{j+1}=\delta$. We can now derive the conclusion
\begin{align}\label{eqlh}
\ell_h(\pi)&\geq\left\lceil\frac{\pi_j}{\pi_j-\pi_{j+1}}\right\rceil+j-1\nonumber\\
&=\left\lceil\frac{(p+1)\delta+r}{\delta}\right\rceil+j-1\nonumber\\
&=p+j+1.
\end{align}

Combining \eqref{eqodd} and \eqref{eqlh}, we see that $t\leq\ell_h(\pi)$.

If $\Phi_0$ contains $1$'s in the initial step, since $\varepsilon(\Phi_0)=\delta$, we see that $b_o(\Phi_0)$ occurs exactly $\delta$ times in the gap-free sequence $\mathbf{s}$. Using similar arguments, we can obtain the desired result. We can also regard this situation as the special case for $p=0$.
\end{proof}

\begin{example}
Taking $\lambda=(31^2,29^3,27^5,25^6,21^4,19^4,17,15^8,11^4,9,7^5,5^8,3^6,1^8)$, then $t=16$.
Since the largest part of $\lambda$ occurs twice, we compute \[\tau^2(\lambda)=(29^5,27^3,25^8,21^2,19^6,15^8,13,11^2,9^3,7^3,5^{10},3^4,1^{10}),\] the first chain of which is $(29^5,27^3,25^8)=(29^5,27^3)(25^8)$. Thus, $\delta=5$. See Figure \ref{figcom} for the computation of $\Phi_i$.
\begin{figure}[ht]
\begin{center}
\begin{tikzpicture}
\draw (0,0)--(15,0); \draw (0,0.6)--(15,0.6); \draw (0,1.2)--(15,1.2); \draw (0,1.8)--(15,1.8);
\draw (0,2.4)--(15,2.4); \draw (0,3)--(15,3); \draw (0,3.6)--(15,3.6); \draw (0,4.2)--(15,4.2);

\node at (0.15,3.9) {$i$};\node at (0.15,3.3) {$0$};\node at (0.15,2.7) {$1$};\node at (0.15,2.1) {$2$};
\node at (0.15,1.5) {$3$};\node at (0.15,0.9) {$4$};\node at (0.15,0.3) {$5$};

\node at (4.7,3.9) {$\Phi_i$};
\node at (4.7,3.3) {$(29^5,27^3,25^8,21^2,19^6,15^8,13,11^2,9^3,7^3,5^{10},3^4,1^{10})$};
\node at (4.7,2.7) {$(27^8,25^3,23^5,19^5,17^3,15^3,13^6,9^2,7^4,5^{9},3^5,1^{9})$};
\node at (4.7,2.1) {$(27^3,25^8,21^5,17^8,13^7,11^2,7^3,5^{10},3^4,1^{10})$};
\node at (4.7,1.5) {$(25^9,23^2,19^5,17^3,15^5,13^2,11^7,5^{11},3^3,1^{11})$};
\node at (4.7,0.9) {$(25^4,23^7,17^8,13^7,11^2,9^5,5^{6},3^8,1^{6})$};
\node at (4.7,0.3) {$(23^{10},21,17^3,15^5,13^2,11^7,7^5,5,3^{13},1)$};

\node at (11.2,3.9) {$\Upsilon_i$};
\node at (11.2,3.3) {$(29^5,27^3)(25^8)$};
\node at (11.2,2.7) {$(27^8,25^3)(23^5)$};
\node at (11.2,2.1) {$(21^5)$};
\node at (11.2,1.5) {$(19^5,17^3)(15^5,13^2)(11^7)$};
\node at (11.2,0.9) {$(13^7,11^2)(9^5)$};
\node at (11.2,0.3) {$(7^5,5)(3^{13},1)$};

\node at (13.9,3.9) {$\kappa(\Upsilon_i)$};
\node at (13.9,3.3) {$29$};\node at (13.9,2.7) {$23$};\node at (13.9,2.1) {$21$};
\node at (13.9,1.5) {$15$};\node at (13.9,0.9) {$9$};\node at (13.9,0.3) {$7$};

\node at (14.7,3.9) {$a_i$};\node at (14.7,3.3) {$0$};\node at (14.7,2.7) {$1$};
\node at (14.7,2.1) {$0$};\node at (14.7,1.5) {$1$};\node at (14.7,0.9) {$1$};
\node at (14.7,0.3) {$0$};

\draw (0,0)--(0,4.2);\draw (0.3,0)--(0.3,4.2);\draw (9.1,0)--(9.1,4.2);\draw (13.4,0)--(13.4,4.2);
\draw (14.4,0)--(14.4,4.2);\draw (15,0)--(15,4.2);
\end{tikzpicture}\caption{The computation of $\Phi_i$.}\label{figcom}
\end{center}
\end{figure}
We now see that $p=5$ and $j=b_o(\Phi_5)=10$, satisfying $t=j+p+1$. Moreover, $j=10$ as a weight must occur $\delta=5$ times in the gap-free sequence $\mathbf{s}$ associated with $\lambda$. This is actually true since \[\mathbf{s}=(1^{11},2^{47},3^{10},4^9,5^{11},6^8,7^{10},8^{14},9^6,{10^5},11^{12},12^6,13^5,14^4).\]
In addition, we have
\[\pi=\varphi(\lambda)=\mathbf{s}'=(158,147,100,90,81,70,62,52,38,32,27,15,9,4),\]
where $\pi_j=\pi_{10}=32$ satisfying $\pi_j=(p+1)\delta+r$. We now obtain
\[\ell_h(\pi)\geq\left\lceil\frac{\pi_{10}}{\pi_{10}-\pi_{11}}\right\rceil+9=\left\lceil\frac{32}{32-27}\right\rceil+9
=16=j+p+1=t.\]
\end{example}

In fact, we can extract the following result from the proof of Theorem \ref{thmflhleq}. For convenience and simplicity, we use $g(\Gamma)$ to denote the largest element of a chain $\Gamma$ in the rest of the paper.
\begin{theorem}\label{thmmapkey}
Suppose that $\lambda$ is an odd partition with $\mathbf{s}=(1^{m_1},2^{m_2},\ldots,k^{m_k})$ as its associated gap-free sequence. Let $\Gamma$ be a chain with exactly $a$ blocks before it and all chains before it being $\delta$-fixed. If $\Gamma$ is $\delta$-fixed but not $(\delta+1)$-fixed, then $\delta=m_j$ for some $j<k$ and
\begin{align*}
g(\Gamma)=2j+2\left\lceil(m_{j+1}+m_{j+2}+\cdots+m_k)/m_j\right\rceil-4a-1.
\end{align*}
If $\Gamma$ includes $1$'s and $\varepsilon(\Gamma)=\delta$, then $\delta=m_k$ and $g(\Gamma)=2k-4a-1$.
\end{theorem}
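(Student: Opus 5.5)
The plan is to rerun the argument in the proof of Theorem \ref{thmflhleq} with $\Gamma$ in place of the first chain of $\lambda$. The $a$ blocks in front of $\Gamma$ supply a constant offset of $2a$ to the block odd index and $-4a$ to the positions of elements.

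\textbf{Step 1: isolating the preceding blocks.} All chains before $\Gamma$ are $\delta$-fixed. By Theorem \ref{thmdelta} they stay $\delta$-fixed under every power of $\tau$. By Theorem \ref{thmnaff} they do not affect the block pattern of $\Gamma$ (or of its descendants) during any run of fewer than $\delta$ consecutive applications of $\tau$. Since they are $\delta$-fixed, their total number of blocks remains $a$ throughout the relevant stretch. So in every block odd index computed below, the part coming from the elements before $\Gamma$ is exactly $2a$. Likewise, an element of $\Gamma$ preceded by $c$ blocks of $\Gamma$ equals $g(\Gamma)-4c$, while the same element viewed in $\lambda$ is preceded by $a+c$ blocks.

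\textbf{Step 2: locating an odd chain of multiplicity exactly $\delta$.} Because $\Gamma$ is $\delta$-fixed but not $(\delta+1)$-fixed, unwinding the recursive definition through $\Gamma,\Gamma_\varepsilon,(\Gamma_\varepsilon)_\varepsilon,\ldots$ gives a first stage where $\tau^{\varepsilon}$ of the current chain contains a chain of odd length with multiplicity exactly $\delta$. The other possibility is that $\Gamma$ contains $1$'s and $\varepsilon(\Gamma)=\delta$. Let $r$ be the total number of applications of $\tau$ up to that moment, so $r$ plays the role of the $r$ in Theorem \ref{thmflhleq}. I expect $1\le r\le\delta$, because the previous stages have multiplicities $>\delta$ only in the even-length parts that survive. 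Lemma \ref{thmfirstchain} and Lemma \ref{thmkeep} then guarantee the following: the chain $\Upsilon_0$ is odd (unless it contains $1$'s), $\varepsilon(\Upsilon_0)=\delta$, and everything in front of it is $\delta$-fixed.

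\textbf{Step 3: the iteration.} Now define $\Phi_{i+1}=\tau^\delta(\Phi_i)$ and let $\Upsilon_{i+1}$ be the chain containing $b_i-2$, exactly as in the proof of Theorem \ref{thmflhleq}, until $\Upsilon_p$ contains $1$'s. The same parity argument shows that $j:=b_o(\Phi_p)$ occurs exactly $\delta$ times in $\mathbf s$, hence $\delta=m_j$. It also shows that $j<k$, because the weight drops to $j-1$ afterwards. Counting blocks gives
\[j=2a+2\sum_{i=0}^p a_i+\frac{b_{p-1}-1}{2}.\]
Tracking the first elements gives
\[b_{p-1}-2=g(\Gamma)-4\sum_{i=0}^p a_i-(2p+3),\]
where the offset $a$ does not appear since these are positions inside $\Gamma$'s descendants. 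Eliminating the $a_i$ yields $g(\Gamma)=2j+2(p+1)-4a-1$.

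\textbf{Step 4: matching with $\mathbf s$.} Finally, the number of $\tau$-steps performed while the weight is still $\ge j+1$ is $\pi_{j+1}=m_{j+1}+\cdots+m_k=p\delta+r$. With $1\le r\le\delta$ this gives $\lceil(m_{j+1}+\cdots+m_k)/m_j\rceil=p+1$, which is the claimed formula. The case where $\Gamma$ contains $1$'s and $\varepsilon(\Gamma)=\delta$ is the $p=0$ degenerate situation. There the weight with multiplicity $\delta$ is the last one, $k=b_o$ at that time, so $\delta=m_k$, and the block count gives $g(\Gamma)=2k-4a-1$ directly.

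\textbf{Main obstacle.} I expect the hardest part to be Step 2 together with the accounting $\pi_{j+1}=p\delta+r$ with $1\le r\le\delta$. One must verify that the steps spent descending through the nested $\Gamma_\varepsilon$'s, before the multiplicity-$\delta$ odd chain appears, amount to a number congruent to the correct residue and not exceeding $\delta$ beyond a multiple of $\delta$. My first attempt will be an induction on $\ell(\Gamma)$ via $\Gamma_\varepsilon$, mirroring Theorem \ref{lemmulfix}.
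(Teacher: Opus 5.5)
Your strategy is the paper's: rerun the $\Phi_i,\Upsilon_i$ iteration of Theorem~\ref{thmflhleq} with an offset of $2a$ in the block odd index, and handle the chain containing $1$'s through $b_o(\lambda)=k$. The one structural difference is your uniform preliminary run of $1\le r\le\delta$ steps. The paper instead takes $\Upsilon_0=\Gamma$ with no preliminary steps when $\ell(\Gamma)$ is odd, so that $m_{j+1}+\cdots+m_k=p\,m_j$ exactly. It uses $0<r<\delta$ and a separated odd chain $\overline{\Gamma}$ only when $\ell(\Gamma)$ is even.

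\textbf{The gap.} The computation that should produce the formula fails as written. Your relation $b_{p-1}-2=g(\Gamma)-4\sum_{i=0}^p a_i-(2p+3)$ has the wrong parity. Eliminating $\sum_i a_i$ between your two displays gives $g(\Gamma)=2j+2(p+1)-4a$, which is even. You transplanted \eqref{eqb1}, where $2t=g(\lambda)+1$ and $g(\Upsilon_0)=2t-3$, but in your setting $\Upsilon_0$ does not start at $g(\Gamma)-2$.

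What is missing is the anchoring identity $g(\Upsilon_0)=g(\Gamma)-4c-2$. Here $c$ is the number of blocks of $\Gamma$ lying above the block whose top element $g(\Upsilon_0)+2$ has just vanished; regrouping the pieces above $\Upsilon_0$ does not change this count. You also need the matching term $2c$ in your block count for $j$. With $b_{p-1}-2=g(\Upsilon_0)-4\sum_{i=0}^p a_i-2p$, the elimination then yields $2j+2(p+1)-4a-1$. In the even-length case this anchoring is exactly the paper's identity $g(\Gamma)=g(\overline{\Gamma})+4(\bar a-a)+2$. It is a real step of the argument, not bookkeeping that can be skipped.

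\textbf{Three smaller points.}

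First, your normalization creates a case $p=0$ for odd $\ell(\Gamma)$. This happens when the chain of $\tau^\delta(\lambda)$ containing $s-2$ already contains $1$'s, where $s$ is the smallest element of $\Gamma$. Then $b_{p-1}$ is undefined, and you must count directly: $j=2a+2c+(g(\Upsilon_0)+1)/2$.

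Second, ``the weight drops to $j-1$ afterwards'' does not give $j<k$. That follows instead from $\pi_{j+1}=p\delta+r\ge 1$.

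Third, what you call the main obstacle is short:
\begin{itemize}
\item For odd length, Remark~\ref{remark} gives $\varepsilon(\Gamma)\ge\delta$, and Theorem~\ref{lemmulfix} excludes $\varepsilon(\Gamma)\ge\delta+1$. Hence $r=\delta$ in your scheme.
\item For even length, every odd chain of $\tau^{\varepsilon}$ of an even chain has multiplicity at least $\varepsilon+1$. Its block maxima are former lower elements of blocks, each of which gained $\varepsilon$ copies.
\item The nested $\Gamma_\varepsilon$ are again even. So the first odd chain of multiplicity exactly $\delta$ appears after some $r\le\delta-1$ steps. It must appear, since otherwise $\Gamma$ would be $(\delta+1)$-fixed.
\end{itemize}
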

\begin{proof}
Recall that the notion ``$\delta$-fixed'' is valid only for chains without $1$'s. Thus, we prove the result according to whether $\Gamma$ contains $1$'s or not.

Case 1. The chain $\Gamma$ excludes $1$'s.

If $\Gamma$ has odd length, it follows from Remark \ref{remark} that $\varepsilon(\Gamma)=\delta$. In the proof of Theorem \ref{thmflhleq}, we set $\Phi_0=\lambda$ and $\Upsilon_0=\Gamma$, and assume $g(\Gamma)=2t-3$ and preserve all other notation.

Lemma \ref{thmkeep} tells us that if $\Upsilon_i$ has odd length with $\varepsilon(\Upsilon_i)=\delta$ and all chains before $\Upsilon_i$ are $\delta$-fixed, then $\varepsilon(\Upsilon_{i+1})=\delta$ and the chains before $\Upsilon_{i+1}$ are $\delta$-fixed. Furthermore, if $\Upsilon_{i+1}$ does not contain $1$'s, the length of $\Upsilon_{i+1}$ is odd. So the arguments used in the proof of Theorem \ref{thmflhleq} are still valid. In addition, we clearly see that
\[j=b_o(\Phi_p)<b_o(\Phi_{p-1})\leq b_o(\Phi_0)=k.\]
Hence, as a weight, $j$ occurs exactly $\delta$ times in the gap-free sequence $\mathbf{s}$. So $\delta=m_j$ and $j<k$. Since every time we perform $m_j$ times of the operation $\tau$, we conclude that
\begin{align*}
m_k+m_{k-1}+\cdots+m_{j+1}=pm_j,
\end{align*}
which is a multiple of $m_j$.

Take note that in the proof of Theorem \ref{thmflhleq}, since we start with the first chain, there is no block before it. But this time we begin with a designated chain $\Gamma$, so we should take into account the number of blocks before $\Gamma$ when we compute the block odd index of $\Phi_p$. Since there are $a+a_0+a_1+\cdots+a_p$ blocks before $b_{p-1}-2$ in $\Phi_p$, we deduce that the block odd index of $\Phi_p$ is
\begin{align}\label{eqb2new}
j=2\left(a+\sum\limits_{i=0}^pa_i\right)+\frac{b_{p-1}-2+1}{2}=2\left(a+\sum\limits_{i=0}^pa_i\right)+\frac{b_{p-1}-1}{2}
\end{align}
Combining \eqref{eqb1} and \eqref{eqb2new} together, we arrive at
\begin{align*}
t=j+p-2a+1.
\end{align*}
Because $g(\Gamma)=2t-3$, we see that
\begin{align*}
g(\Gamma)&=2j+2(m_{j+1}+m_{j+2}+\cdots+m_k)/m_j-4a-1\\
&=2j+2\left\lceil(m_{j+1}+m_{j+2}+\cdots+m_k)/m_j\right\rceil-4a-1.
\end{align*}

If $\Gamma$ has even length, there must exist an integer $r$ with $0<r<\delta$ such that a chain $\overline{\Gamma}$ of odd length with multiplicity of $\delta$ will be separated from $\Gamma$ in $\tau^r(\lambda)$ otherwise $\Gamma$ is $(\delta+1)$-fixed. Because the chains before $\Gamma$ are all $\delta$-fixed, $\overline{\Gamma}$ is a full chain not a subchain of $\tau^r(\lambda)$. Since $\Gamma$ does not contain $1$'s and $\Gamma$ has even length, the chain $\overline{\Gamma}$ must exclude $1$'s. Furthermore, all chains before $\overline{\Gamma}$ in $\tau^r(\lambda)$ are $\delta$-fixed. We this time set $\Phi_0=\tau^r(\lambda)$ and $\Upsilon_0=\overline\Gamma$, and assume $g(\overline\Gamma)=2t-3$. Applying similar arguments, we conclude that $\delta$ is the number of occurrences of some weight in the gap-free sequence associated with $\tau^r(\lambda)$, denoted ${\mathbf{s}}_\tau$. Note that ${\mathbf{s}}_\tau$ is a subsequence of $\mathbf{s}$, saying
${\mathbf{s}}_\tau=(1^{m_1},2^{m_2},\ldots,h^{m_h-x})$ where $1\leq h\leq k$ and $0\leq x<m_h$. Then $\delta\in\{m_1,m_2,\ldots,m_{h-1}\}$, also saying $\delta=m_j$ where $j<h\leq k$. Moreover, we have \[m_{k}+m_{k-1}+\cdots+m_{h+1}+x=r\]
and
\begin{align*}
(m_h-x)+m_{h-1}+\cdots+m_{j+1}&=pm_j,
\end{align*}
which implies
\begin{align*}
m_{j+1}+m_{j+2}+\cdots+m_k=pm_j+r.
\end{align*}

Suppose that there are totally $\bar{a}$ blocks before $\overline\Gamma$ in $\tau^r(\lambda)$. Using similar arguments, we can derive that
\begin{align*}
g(\overline\Gamma)=2t-3=2j+2(m_{j+1}+m_{j+2}+\cdots+m_h-x)/m_j-4\bar{a}-1.
\end{align*}
During the process of performing $r$ times of the operation $\tau$, the number of blocks before the first element of $\Gamma$ remains the same. So the number of blocks between $g(\Gamma)$ and $g(\overline\Gamma)$ in $\tau^r(\lambda)$ is $\bar{a}-a$. In addition, since $\overline\Gamma$ is separated from $\Gamma$, we can see that
\begin{align*}
g(\Gamma)&=g(\overline\Gamma)+4(\bar{a}-a)+2\\
&=2j+2(m_{j+1}+m_{j+2}+\cdots+m_h-x)/m_j-4{a}+1\\
&=2j+2(m_{j+1}+m_{j+2}+\cdots+m_h-x+m_j)/m_j-4{a}-1\\
&=2j+2(pm_j+m_j)/m_j-4{a}-1\\
&=2j+2\left\lceil(pm_j+r)/m_j\right\rceil-4{a}-1 \\
&=2j+2\left\lceil(m_{j+1}+m_{j+2}+\cdots+m_k)/m_j\right\rceil-4a-1.
\end{align*}

Case 2. The chain $\Gamma$ includes $1$'s.

In the proof of Theorem \ref{thmflhleq}, we set $\Phi_0=\lambda$ and $\Upsilon_0=\Gamma$. Since $\Upsilon_0$ contains $1$'s in the initial step, and $\varepsilon(\Upsilon_0)=\delta$, we see that $b_o(\Phi_0)=k$ occurs exactly $\delta$ times in the gap-free sequence $\mathbf{s}$. Thus, $\delta=m_k$. Moreover, we have
\begin{align*}
g(\Gamma)=2(k-2a)-1=2k-4a-1.
\end{align*}
We finish the proof.
\end{proof}

\section{Lower bound}\label{seclb}

Assume that $\mathbf{s}=(1^{m_1},2^{m_2},\ldots,k^{m_k})$ is a gap-free sequence, and the weight $j$ determines the lecture hall position. We denote by $\mu$ and $\nu$ the odd partition produced by the sequence $(1^{m_1},2^{m_2},\ldots,(j-1)^{m_{j-1}})$ and $(1^{m_1},2^{m_2},\ldots,j^{m_j})$, respectively. Clearly,  $b_o(\mu)=j-1$ and $b_o(\nu)=j$.

\begin{lemma}\label{thmich}
Suppose that $1$ occurs in $\mu$ and $\Gamma$ is the chain of $\mu$ containing $1$'s. Then we have $\varepsilon(\Gamma)\geq m_{j-1}$.
\end{lemma}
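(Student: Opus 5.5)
The plan is to reverse the construction: look at how $\mu$ is built by $\varphi^{-1}$ and follow what happens near the bottom of the partition during the last $m_{j-1}$ weight-additions. Write $\mu_0$ for the odd partition produced by $(1^{m_1},\ldots,(j-2)^{m_{j-2}})$. Then $b_o(\mu_0)=j-2$, and
\[\mu=\mu_0\stackrel{+(j-1)}{\longrightarrow}\mu_1\stackrel{+(j-1)}{\longrightarrow}\cdots\stackrel{+(j-1)}{\longrightarrow}\mu_{m_{j-1}}=\mu .\]
The first step adds weight $b_o(\mu_0)+1$. Every later step adds weight $b_o(\mu_i)=j-1$.

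\textbf{Step 1: how each addition acts near the $1$'s.} By the definition of $\lambda\stackrel{+w}{\longrightarrow}$ and the identities \eqref{eqow}--\eqref{eqeo}, one addition raises the maximal element of every square-bracket block $[\,(t+2)^a,t^b\,]$ by one copy, and raises the block $[1^a]$ by one copy when $w$ is odd. Once $w=b_o(\mu_i)$, the square-bracket grouping of $\mu_i$ coincides chainwise with the parenthesis grouping of $\mu_{i+1}$: the chain discussion in Section \ref{secsta} shows the two groupings agree on every chain, except for the chain containing the $1$'s, and that chain is handled by the parity of $w$. So from $\mu_1$ onward I expect the bottom-up block pattern of the part of the chain containing $1$ to stay fixed. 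Each of the $m_{j-1}$ additions then puts one more copy on the maximal element of each of its blocks.

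\textbf{Step 2: the multiplicity bound.} Let $\Gamma$ be the chain of $\mu$ containing the $1$'s, with blocks grouped as in the definition of $\varepsilon$. I would argue that every block of $\Gamma$ coincides with a block that received a copy of its top element at each of the $m_{j-1}$ steps. This needs the chain containing $1$ to have been built "from the bottom" throughout these steps, so that its top-down and bottom-up groupings agree. For even length that agreement is automatic. For odd length the bottom block is $(1^a)$, and this happens exactly when $j-1$ is odd, in which case $1$ received a copy at each step. Given this, each top element of a block of $\Gamma$ occurs at least $m_{j-1}$ times, so $\varepsilon(\Gamma)\ge m_{j-1}$.

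\textbf{Main obstacle.} The hard step is Step 2. The danger is that during the $m_{j-1}$ additions a chain could merge with the chain containing $1$ from above, or a previously empty top element could appear. Either would create a block whose top has fewer than $m_{j-1}$ copies. Two facts should rule this out.
\begin{itemize}
\item Merges of this kind happen only in the first addition, when the weight jumps by one and the grouping from the $1$'s shifts.
\item A top element that first appears at step $i\ge 2$ would force $b_o$ to change, contradicting $b_o(\mu_i)=j-1$ for all $i\ge1$.
\end{itemize}
If this direct argument gets delicate, the fallback is to work with $\tau$ instead. Applying $\tau$ to $\mu$ exactly $m_{j-1}-1$ times returns $\mu_1$ with $b_o$ constant along the way. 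If some top element of $\Gamma$ had multiplicity $<m_{j-1}$, then $\tau$ would exhaust it earlier, and the chain would disconnect. By the parity analysis behind Lemma \ref{thmfirstchain}, that would change $b_o$ before weight $j-1$ is used up, a contradiction.
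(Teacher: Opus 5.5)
Your fallback is the paper's own argument: assume $r=\varepsilon(\Gamma)<m_{j-1}$, note that $b_o(\tau^r(\mu))=j-1$ since $r<m_{j-1}$, and get a parity contradiction when $\Gamma$ splits. Your main route, which counts copies along $\mu_0\to\mu_1\to\cdots\to\mu_{m_{j-1}}=\mu$, is a genuinely different direction. However, neither version proves the decisive step.
\begin{itemize}
\item In the main route you only ``expect'' the bottom-up pattern to stay fixed, and the two bulleted facts are asserted, not derived.
\item In the fallback, the parity step is credited to Lemma \ref{thmfirstchain}. That lemma concerns the first chain of $\tau^r(\lambda)$, with $r$ the multiplicity of the largest part, and says nothing about the chain containing the $1$'s.
\item The fallback also tacitly assumes that a block top of $\Gamma$ stays a block top through $r$ applications of $\tau$, i.e.\ that chains above $\Gamma$ never shift its block pattern.
\end{itemize}

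The missing idea is a one-line parity fact. Every block has weight $2$ except a block consisting only of $1$'s, so $b_o$ is odd exactly when $1$ occurs and the chain containing the $1$'s has odd length. Hence whenever $b_o=j-1$, the block tops of the chain containing the $1$'s are exactly its elements congruent to $c$ modulo $4$, where $c=1$ if $j-1$ is odd and $c=3$ if $j-1$ is even. The square-bracket grouping used when adding $w=j-1$ has the same tops on that chain, so its largest element is a block top.

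For the main route, let $C_i$ be the chain of $\mu_i$ containing the $1$'s. The $1$'s are present in every $\mu_i$ with $i\ge1$: for $c=1$ they gain a copy at each addition, and for $c=3$ they are never created yet occur in $\mu$. For $1\le i<m_{j-1}$, the top of $C_i$ gains a copy and nothing is created directly above it. So $C_{i+1}\subseteq C_i$, and every element of $C_{i+1}$ congruent to $c$ gains one copy; both of your bullets are instances of this. Every element of $C_1$ occurs at least once in $\mu_1$, so merges during the first addition need no analysis. Since $\Gamma=C_{m_{j-1}}\subseteq C_1$, each block top of $\Gamma$ occurs at least $1+(m_{j-1}-1)=m_{j-1}$ times.

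For the fallback, take $r=\varepsilon(\Gamma)<m_{j-1}$ and let $t$ be the smallest block top of $\Gamma$ occurring exactly $r$ times. Because $b_o(\tau^i(\mu))=j-1$ for $0\le i<r$, $t$ and the tops below it remain block tops and each loses one copy per step. So in $\tau^r(\mu)$ the chain containing the $1$'s is $\{t-2,\ldots,3,1\}$, of length $(t-1)/2$, or the $1$'s have vanished if $t=1$. This forces $b_o(\tau^r(\mu))$ to have the parity opposite to $j-1$, a contradiction.

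With this fact supplied, your main route is a valid alternative to the paper's proof. It counts copies directly instead of arguing by contradiction, and it makes explicit the stability of the block pattern that the paper's proof uses without comment.
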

\begin{proof}
Assume $\varepsilon(\Gamma)=r$, and let $t$ be the smallest part which is the maximal element of some block and occurs exactly $r$ times in $\Gamma$.

Suppose that $r<m_{j-1}$. Then the gap-free sequence corresponding to $\tau^{r}(\mu)$ is \[(1^{m_1},2^{m_2},\ldots,(j-1)^{m_{j-1}-r}),\]
which means that $b_o(\tau^r(\mu))=j-1$.

If $j$ is odd, then $j-1$ is even, which forces $\Gamma$ to have the form of
\[\cdots(t+4^{\geq r},t+2^{\geq1})(t^{=r},t-2^{\geq1})(t-4^{>r},t-6^{\geq1})\cdots(3^{>r},1^{\geq1}).\]
We now see that the chain of $\tau^r(\mu)$ containing $1$'s becomes
\[(t-2^{\geq r+1},t-4^{>0})(t-6^{\geq r+1},t-8^{>0})\cdots(5^{\geq r+1},3^{>0})(1^{\geq r+1}),\]
which implies that $b_o(\tau^r(\mu))$ is odd, contradicting that $b_o(\tau^r(\mu))=j-1$ is even.

The proof for the case that $j$ is even is similar and will be omitted.
\end{proof}

\begin{lemma}\label{thmomega}
Let $t$ be the smallest missing odd integer in $\mu$. Then $t$ must lie in the chain $\Gamma$ of $\nu$ containing $1$'s, be the maximal element of some block, and appear exactly $m_j$ times in $\nu$. For other elements $s$ of $\Gamma$ congruent to $t$ modulo $4$, if $s>t$, then $s$ occurs at least $m_j$ times; and if $s<t$, then $s$ occurs more than $m_j$ times.
\end{lemma}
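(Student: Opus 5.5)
We view $\nu$ as obtained from $\mu$ by performing the operation $\stackrel{+j}{\longrightarrow}$ exactly $m_j$ times. Since $b_o(\mu)=j-1$, the first addition has weight $j=b_o(\mu)+1$; every later addition has weight equal to the current block odd index. The plan is to track what these $m_j$ additions do near the bottom of the partition, where the smallest missing odd integer $t$ of $\mu$ sits.

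The first step is to describe the bottom of $\mu$. All odd integers $1,3,\ldots,t-2$ occur in $\mu$, so they form the chain $\Gamma_0$ of $\mu$ containing $1$'s (when $t=1$, $\Gamma_0$ is empty). By Lemma \ref{thmich}, $\varepsilon(\Gamma_0)\geq m_{j-1}$. Using Theorem \ref{thmpm}, namely $m_j<m_{j-1}$, this gives that every block maximum of $\Gamma_0$ occurs more than $m_j$ times. We then look at the square-bracket grouping used in $\stackrel{+j}{\longrightarrow}$. Because the weight is $b_o(\mu)+1$, this grouping starts from the bottom: when $j$ is odd, the $1$'s form their own block; when $j$ is even, the grouping is $[3,1]$, and so on. We check by parity that the bracket grouping of $\Gamma_0$ ends at the top with a block $[t,t-2]$ whose first entry $t$ occurs $0$ times. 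This is forced by the parity relation between $\ell(\Gamma_0)$ and $j-1$, in the same way as the case analysis in the proof of Lemma \ref{thmich}.

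Each addition adds one copy of the larger element of each square block. So during the $m_j$ additions, $t$ acquires one copy per step as long as the block pattern $[t,t-2]$ persists, ending with exactly $m_j$ copies. Meanwhile the elements $s<t$ with $s\equiv t \pmod 4$ are also block maxima in the parenthesis grouping of $\nu$. Each such $s$ started in $\mu$ with multiplicity greater than $m_j$ (as a lower element of a bracket block, or via $\varepsilon(\Gamma_0)\ge m_{j-1}>m_j$), and it is not decreased below that. For $s>t$ in $\Gamma$, the part of the chain above $t$ arises because $t$ now occurs and joins $\Gamma_0$ to the chain just above it. Along that upper chain, elements congruent to $t$ modulo $4$ have been block maxima and received one copy per addition, so they occur at least $m_j$ times. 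Finally, since $t$ now occurs and is a block maximum of the chain containing $1$'s, it lies in $\Gamma$ as claimed.

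The main obstacle is proving that the block pattern near $t$ does not change during the $m_j$ additions. Concretely, no block maximum of $\Gamma_0$ may be exhausted, and the upper neighbouring chains must not shift the parity of the grouping. For the first part, we argue through the inverse direction. If some block pattern shifted before $m_j$ steps, reversing with $\tau$ would produce a weight $j$ with multiplicity smaller than $m_j$ occurring earlier. This would contradict condition (b) or (c) in the definition of the lecture hall position, via Theorem \ref{thms}, much as in the proof of Theorem \ref{thmpm}. For the multiplicity claims for $s>t$, we would appeal to Theorem \ref{thmnaff} applied in reverse, together with Remark \ref{remmj} ($m_j\le m_{j+1}$).
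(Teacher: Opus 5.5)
Your overall route is the paper's. Lemma \ref{thmich} and Theorem \ref{thmpm} give $\varepsilon(\Gamma_0)\ge m_{j-1}>m_j$. The parity of $b_o(\mu)=j-1$ forces the bracket grouping at the bottom to end with $[t,t-2]$, with $t$ absent. Then $t$ gains one copy per addition.

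However, your justification for the elements $s<t$ with $s\equiv t\pmod 4$ is wrong. These $s$ are $t-4,t-8,\ldots$, together with $1$ when $j$ is odd. They are the \emph{upper} entries of the bracket blocks. In $\mu$ they are the non-maximal entries of the parenthesis blocks of $\Gamma_0$, and they may occur only once. The bound $\varepsilon(\Gamma_0)\ge m_{j-1}$ applies to the other residue class, $t-2,t-6,\ldots$, which are the \emph{lower} bracket entries. The correct reason is that each such $s$ occurs at least once in $\mu$ and gains one copy at each of the $m_j$ additions, so it occurs at least $m_j+1$ times in $\nu$. For $j$ even, the paper passes from $(t-2^{\ge m_{j-1}},t-4^{\ge 1})\cdots(1^{\ge m_{j-1}})$ in $\mu$ to $\cdots(t^{=m_j},t-2^{\ge1})(t-4^{>m_j},t-6^{\ge1})\cdots(3^{>m_j},1^{\ge1})$ in $\nu$.

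Your ``main obstacle'' is only gestured at: an inverse-$\tau$ argument through conditions (b)/(c), Theorem \ref{thmnaff} ``in reverse'', and Remark \ref{remmj}. None of it is carried out, and none of it is needed; Remark \ref{remmj} concerns $m_{j+1}$, which plays no role in producing $\nu$. Persistence at the bottom is immediate once the residue classes are sorted out. The lower bracket entries $t-2,t-6,\ldots$ are exactly the parenthesis maxima of $\Gamma_0$. Each has at least $m_{j-1}\ge m_j+1$ copies and loses exactly one per addition, so none is exhausted. Since $\stackrel{+w}{\longrightarrow}$ groups from the smallest part upward, with $w=j$ of fixed parity throughout, nothing above $t$ can disturb this pattern.

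What your proposal actually lacks is a proof that $t$ is a maximum in the parenthesis (top-down) grouping of $\nu$. That requires the portion of $\Gamma$ above $t$ to have even length, which you simply assert. The paper gets this together with the bound for $s>t$:
\begin{itemize}
\item Because $t$ is always an upper bracket entry, $t+2$ can never gain a copy.
\item So if $t+2$ survives in $\nu$, it was the lower entry of $[t+4,t+2]$ throughout. Hence it had at least $m_j+1$ copies in $\mu$, and $t+4$ received $m_j$ copies.
\item Iterating with $t+6,t+8,\ldots$ (membership in the chain $\Gamma$ forces all intermediate elements to survive) shows that each $t+2+4i\in\Gamma$ forces $t+4+4i$ to occur at least $m_j$ times.
\end{itemize}
Therefore $g(\Gamma)\equiv t\pmod 4$, and the top-down grouping of $\Gamma$ makes $t$ a block maximum.
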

\begin{proof}
If $t=1$, then $b_o(\mu)$ must be even, so $j$ is odd. Then $1$ occurs exactly $m_j$ times in $\nu$. If $3$ occurs in $\nu$, then $3$ occurs in $\mu$ at least $m_j+1$ times. Thus, $5$ occurs at least $m_j$ times in $\nu$. Similarly, if $7$ occurs in $\nu$, then $9$ occurs at least $m_j$ times in $\nu$, and so forth. Thus, $\Gamma$ is either $(1^{m_j})$ or
\begin{align*}
\Gamma=\cdots(9^{\geq m_j},7^{\geq1})(5^{\geq m_j},3^{\geq1})(1^{=m_j}).
\end{align*}

We now assume that $t\neq1$, implying that $1$ occurs in $\mu$. Let $\widetilde\Gamma$ be the chain of $\mu$ containing $1$'s. We have two cases to consider according to the parity of $j$.

If $j$ is even, then $j-1$ is odd, which forces $\widetilde\Gamma$ to be
\[(t-2^{\geq m_{j-1}},t-4^{\geq1})(t-6^{\geq m_{j-1}},t-8^{\geq1})\cdots(5^{\geq m_{j-1}},3^{\geq1})(1^{\geq m_{j-1}}),\]
because of $b_o(\mu)=j-1$ and Lemma \ref{thmich}.

When we perform the operation $\mu\stackrel{+j}{\longrightarrow}$, the block pattern of $\widetilde\Gamma$ is adjusted as
\[(t^{=0},t-2^{\geq m_{j-1}})(t-4^{\geq1},t-6^{\geq m_{j-1}})\cdots(7^{\geq1},5^{\geq m_{j-1}})(3^{\geq1},1^{\geq m_{j-1}}).\]
According to Theorem \ref{thmpm}, we know that $m_j<m_{j-1}$. Therefore,
\begin{align*}
\Gamma=
\cdots(t^{=m_j},t-2^{\geq 1})(t-4^{>m_{j}},t-6^{\geq 1})\cdots(7^{>m_{j}},5^{\geq 1})(3^{>m_{j}},1^{\geq 1}).
\end{align*}
If $t+2$ occurs in $\nu$, then $t+2$ occurs at least $m_j+1$ times in $\mu$. Thus, $t+4$ occurs at least $m_j$ times in $\nu$. Similarly, if $t+6$ occurs in $\nu$, then $t+8$ occurs at least $m_j$ times in $\nu$, and so forth.

The case when $j$ is odd is similar, and the proof is omitted.
\end{proof}

\begin{theorem}\label{thmflhgeq}
If the odd partition $\lambda$ with largest part $\lambda_1$ corresponds to the distinct partition $\pi$ with lecture hall length $\ell_h(\pi)$ under our bijection $\varphi$, then we have $(\lambda_1+1)/2\geq\ell_h(\pi)$.
\end{theorem}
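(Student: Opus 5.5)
The inequality $(\lambda_1+1)/2\geq\ell_h(\pi)$ amounts to showing that $\lambda$ has a part at least $2\ell_h(\pi)-1$. I will exhibit such a part by tracking the lecture hall position through the reconstruction $\varphi^{-1}$. Write $\mathbf{s}=(1^{m_1},\ldots,k^{m_k})$ for the gap-free sequence associated with $\lambda$ and let $j$ be the lecture hall position, so that $\ell_h(\pi)=\lceil\pi_j/m_j\rceil+j-1$, where $\pi_j=m_j+m_{j+1}+\cdots+m_k$. This gives
\[\ell_h(\pi)=j+\left\lceil(m_{j+1}+\cdots+m_k)/m_j\right\rceil.\]
Let $\nu$ be the odd partition built from the prefix $(1^{m_1},\ldots,j^{m_j})$, as in Lemma \ref{thmomega}. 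By that lemma, the smallest odd integer $t$ missing from $\mu$ lies in the chain $\Gamma$ of $\nu$ containing $1$'s. Moreover, $t$ is the maximal element of a block and occurs exactly $m_j$ times, and the elements congruent to $t$ modulo $4$ above it occur at least $m_j$ times. So $\varepsilon(\Gamma)=m_j$, and I expect the chains of $\nu$ lying before $\Gamma$ to be $m_j$-fixed.

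The plan is then to feed this configuration into the machinery of Section \ref{secub}, run in the forward direction.

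\textbf{Step 1.} I would show that in $\nu$ every chain without $1$'s is $m_j$-fixed. The tool is Theorem \ref{thmpm}, which gives $m_j<\min\{m_1,\ldots,m_{j-1}\}$. Building $\nu$ from scratch, each earlier weight $i<j$ is added $m_i>m_j$ times. I would argue by induction on the construction that chains which split off during the building of $\mu$ have multiplicity exceeding $m_j$, and then invoke Theorem \ref{lemmulfix} and Lemma \ref{lemtauchai}.

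\textbf{Step 2.} I would continue adding the weights $j+1,\ldots,k$ with multiplicities $m_{j+1},\ldots,m_k$ to reach $\lambda$. Since $\tau$ is the inverse operation, this is the same as running $\tau$ from $\lambda$ back down to $\nu$. By Remark \ref{remmj}, $m_j\leq m_{j+1}$ when $j<k-1$. I expect that, during these additions, the block in $\Gamma$ headed by $t$ is pushed upward, with its top growing by $2$ per block of $m_j$ additions and pairs merging. In the forward $\tau$-picture this is exactly the process $\Phi_0,\Phi_1,\ldots,\Phi_p$ in the proof of Theorem \ref{thmflhleq}, read backwards.

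\textbf{Step 3.} I would identify a chain $\Gamma^\ast$ of $\lambda$ that is $m_j$-fixed but not $(m_j+1)$-fixed, with all chains before it $m_j$-fixed. Applying Theorem \ref{thmmapkey} with $\delta=m_j$ and $a$ blocks before $\Gamma^\ast$ then gives
\[g(\Gamma^\ast)=2j+2\left\lceil(m_{j+1}+\cdots+m_k)/m_j\right\rceil-4a-1=2\ell_h(\pi)-4a-1.\]
In the case where $\Gamma^\ast$ contains the $1$'s, the second clause of that theorem applies instead. To recover the largest part, note that the $a$ blocks before $\Gamma^\ast$ each span two odd values, and the chains before it are separated by gaps of at least $4$. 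Hence $\lambda_1\geq g(\Gamma^\ast)+4a=2\ell_h(\pi)-1$, as required.

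\textbf{Main obstacle.} The hard part is Steps 1 and 3: showing that such a $\Gamma^\ast$ exists, that is, that the multiplicity-$m_j$ block produced at the lecture hall position survives into $\lambda$ as a chain which is $m_j$-fixed but not $(m_j+1)$-fixed. Its predecessors must all be $m_j$-fixed, which is where minimality condition (c) and the tie-breaking condition (b) on $r_j$ in the definition of the lecture hall position must enter. My first attempt would be a contradiction argument. If every chain of $\lambda$ were $(m_j+1)$-fixed, or some earlier chain failed to be $m_j$-fixed, then Theorem \ref{thmmapkey} applied to the first offending chain would produce a weight $j'$ with $\lceil\pi_{j'}/m_{j'}\rceil+j'-1\geq\ell_h(\pi)$, and either $j'<j$ or the same bound with a larger remainder $r_{j'}$. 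Either outcome contradicts the choice of $j$ via Theorem \ref{thms}.
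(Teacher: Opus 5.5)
\textbf{There is a genuine gap in Step 3: the chain $\Gamma^\ast$ you want need not exist.} Take the paper's own example from Figure \ref{figevid}: $\lambda=(17^3,15^4,13^5,11^3,9^2,7)$ with $\pi=\varphi(\lambda)=(73,64,39,33,14,11)$, so $\mathbf{s}=(1^{9},2^{25},3^{6},4^{19},5^{3},6^{11})$.

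Here $\ell_h(\pi)=9$ is attained at $i=1,3,5$, with remainders $r_1=1$, $r_3=3$, $r_5=2$. So the lecture hall position is $j=3$ and $m_j=6$.

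But $\lambda$ is a single chain $(17^3,15^4)(13^5,11^3)(9^2,7)$ with $\varepsilon=2$. Moreover, $\tau^2(\lambda)=(17,15^6,13^3,11^5)\cup(7^3)$ splits off the odd-length chain $(7^3)$, which has multiplicity $3$. Hence no chain of $\lambda$ is even $4$-fixed, let alone $6$-fixed.

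Applying Theorem \ref{thmmapkey} to this chain gives $\delta=3=m_5$, i.e.\ $j'=5$. That index happens to attain $\ell_h(\pi)$, but $j'>j$ and $r_5<r_3$. So the dichotomy driving your contradiction argument (either $j'<j$, or a larger remainder) is also false. The lecture hall weight $j$ need not appear as the fixedness level of any chain of $\lambda$.

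\textbf{The deeper problem is that Theorem \ref{thmmapkey} points the wrong way.} It is an equality, $g(\Gamma)=2\bigl(\lceil\pi_{j'}/m_{j'}\rceil+j'-1\bigr)-4a-1$, and this is automatically $\leq 2\ell_h(\pi)-4a-1$. Applied to the first chain of $\lambda$, it merely reproduces the inequality of Theorem \ref{thmflhleq}. To get the reverse bound, you would have to show that the $j'$ it produces attains the maximum, which is the entire content of the theorem. Your contradiction sketch has no independent quantity to compare $g(\Gamma)$ against. Likewise, in your $1$'s case the second clause gives $2k-4a-1\leq 2\ell_h(\pi)-4a-1$, again the wrong direction.

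The paper supplies that missing anchor by running $\varphi^{-1}$ forward from $\nu$. It tracks the chain $\Omega_i$ through $q=\lceil(m_{j+1}+\cdots+m_k)/m_j\rceil-1$ rounds $\eta$, each adding $m_j$ weights. It shows that $f_{i-1}+2$ becomes $\omega(\Omega_i)$, so that $f_i+4(u_i-v_i)=2j+2i-1$ grows by $2$ per round.

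Theorem \ref{thmmapkey} is used only inside Theorem \ref{thminvkey}, on intermediate partitions. There the known position of $f_i+2$ gives a lower bound on $g$, and that bound clashes with the maximality and tie-breaking at $j$. The paper never needs to identify which chain of $\lambda$ carries $\lambda_1$; it only needs $\lambda_1\geq f_q+4(u_q-v_q)+2=2j+2q+1=2\ell_h(\pi)-1$.
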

\begin{proof}
Assume that $\mathbf{s}=(1^{m_1},2^{m_2},\ldots,k^{m_k})$ is the gap-free sequence associated with $\lambda$ and $\pi$, and the weight $j$ determines the lecture hall position. Recall that $\mu$ and $\nu$ are the odd partitions produced by the sequence $(1^{m_1},2^{m_2},\ldots,(j-1)^{m_{j-1}})$ and $(1^{m_1},2^{m_2},\ldots,j^{m_j})$, respectively. Obviously, $b_o(\mu)=j-1$ and $b_o(\nu)=j$.

For a chain $\Gamma$, we use $\chi(\Gamma)$ and $\omega(\Gamma)$ to denote the first and last element of $\Gamma$ which is the maximal element of the block containing it and occurs exactly $m_j$ times in $\Gamma$, respectively. Note that $\chi(\Gamma)$ and $\omega(\Gamma)$ may be identical.

Set $\Psi_0=\nu$ and let $\Omega_0$ be the chain of $\nu$ containing $1$'s, the largest element of which is denoted $f_0$. Then $\omega(\Omega_0)$ is the smallest missing odd integer in $\mu$ according to Lemma \ref{thmomega}. Assume that there are $u_0$ and $v_0$ blocks before $\omega(\Omega_0)$ in $\Psi_0$ and $\Omega_0$, respectively. Then $\omega(\Omega_0)=2(j-2u_0)-1$ since $b_o(\nu)=j$, and $f_0=\omega(\Omega_0)+4v_0=2(j-2(u_0-v_0))-1$.

Set $\Psi_1=\eta(\Psi_0)$ where $\eta$ denotes the operation of adding $m_j$ successive weights in $\mathbf{s}$ as we did in the construction of $\varphi^{-1}$. The weights added are in fact all equal to $j+1$ since $m_{j+1}\geq m_j$ (see Remark \ref{remmj}). The block pattern of $\Omega_0$ was changed when we performed $\eta$, resulting in $\chi(\Omega_0)$ must disappear in $\Psi_1$. Thus, $f_0+2$ occurs exactly $m_j$ times in $\Psi_1$ and lies in a chain of odd length. Let $\Omega_1$ be the chain of $\Psi_1$ containing $f_0+2$, the largest part of which is denoted $f_1$. For each block before $f_0+2$, its maximal element occurs at least $m_j$ times; and for each block behind $f_0+2$, its maximal element occurs more than $m_j$ times. Thus, $\omega(\Omega_1)=f_0+2$. Assume that there are $u_1$ and $v_1$ blocks before $\omega(\Omega_1)$ in $\Psi_1$ and $\Omega_1$, respectively. Then we have $u_1=u_0-v_0$. Thus, $\omega(\Omega_1)=2(j-2u_1)+1$ and
\[f_1=\omega(\Omega_1)+4v_1=2(j-2(u_1-v_1))+1.\]

We proceed with the operation $\eta$ in total $q:=\left\lceil(m_{j+1}+\cdots+m_k)/m_j\right\rceil-1$ times, and get two sequences, one of which is formed by the partitions $(\Psi_0,\Psi_1,\ldots,\Psi_q)$, and the other of which is formed by the chains $(\Omega_0,\Omega_1,\ldots,\Omega_q)$ satisfying $\Psi_{i}=\eta(\Psi_{i-1})$ and $\Omega_{i}$ is the chain of $\Psi_{i}$ containing the element $f_{i-1}+2$, where $f_{i-1}$ is the first element of $\Omega_{i-1}$. The block pattern of $\Omega_{i-1}$ will not be destroyed by the parts less than its smallest part before we finish the process of $\eta$ (see Theorem \ref{thminvkey}). This guarantees that
\begin{itemize}
\item $\chi(\Omega_{i-1})$ disappears in $\Psi_i$;
\item $f_{i-1}+2$ occurs exactly $m_j$ times in $\Psi_{i}$;
\item for each block before $f_{i-1}+2$, its maximal element occurs at least $m_j$ times, and for each block
      behind $f_{i-1}+2$, its maximal element occurs more than $m_j$ times;
\item $\omega(\Omega_{i})=f_{i-1}+2$;
\item $\ell(\Omega_i)$ is odd.
\end{itemize}
Assume that there are $u_i$ and $v_i$ blocks before $\omega(\Omega_i)$ in $\Psi_i$ and $\Omega_i$, respectively. Clearly, we have $u_i=u_{i-1}-v_{i-1}$. Thus,
\[\omega(\Omega_i)=f_{i-1}+2=(2(j-2(u_{i-1}-v_{i-1}))+2i-3)+2=2(j-2u_i)+2i-1\]
and $f_i=\omega(\Omega_i)+4v_i=2(j-2(u_i-v_i))+2i-1$.

Since there are $u_q-v_q$ blocks before $f_q$ in $\Psi_q$, the largest part of $\Psi_q$ is at least $f_q+4(u_q-v_q)$.

Let $r_j=(m_{j+1}+\cdots+m_k)-qm_j$. It is routine to verify that $0<r_j\leq m_j$. Now there are $r_j$ weights left in the gap-free sequence after performing the operation $\eta$ in total $q$ times. To finish the inverse of our bijection, we next add these $r_j$ remaining weights of $\mathbf{s}$ to $\Psi_q$ and get the desired odd partition $\lambda$. Although we are unable to compute the exact value of the largest part $\lambda_1$ of $\lambda$, we know that $\lambda_1\geq f_q+4(u_q-v_q)+2$ because the length of $\Omega_q$ is odd and we will perform the weight addition operation at least once. Since $f_q=2(j-2(u_q-v_q))+2q-1$, we conclude that
\[\lambda_1\geq f_q+4(u_q-v_q)+2=2j+2q+1.\]

On the other hand, we have $\pi_j=m_j+m_{j+1}+\cdots+m_k$, and
\begin{align*}
\ell_h(\pi)&=\left\lceil\frac{\pi_j}{m_j}\right\rceil+j-1\\
&=\left\lceil\frac{m_j+m_{j+1}+\cdots+m_k}{m_j}\right\rceil+j-1\\
&=\left\lceil\frac{m_{j+1}+\cdots+m_k}{m_j}\right\rceil+j\\
&=q+1+j.
\end{align*}
Therefore, we arrive at $(\lambda_1+1)/2\geq\ell_h(\pi)$.
\end{proof}

\begin{example}
Let $\mathbf{s}=(1^{11},2^{47},3^{10},4^9,5^{11},6^8,7^{10},8^{14},9^6,{10^5},11^{12},12^6,13^5,14^4)$. It is routine to check that the weight $j=10$ determines the lecture hall position. Thus, we get $m_j=5$ and $q=\lceil(12+6+5+4)/5\rceil-1=5$. We aim to verify that \[\lambda_1\geq 2j+2q+1=31.\]
We will perform totally $q=5$ times of the operation $\eta$. For convenience, let $\mathbf{w}_{i-1}$ denote the collection of $m_j$ weights added to $\Psi_{i-1}$ to obtain $\Psi_i$ for $1\leq i\leq 5$, and we use $\mathbf{w}_5$ to denote the collection of the weights left in $\mathbf{s}$. First, the subsequence $(1^{m_1},2^{m_2},\ldots,j^{m_j})=(1^{11},2^{47},3^{10},4^9,5^{11},6^8,7^{10},8^{14},9^6,{10^5})$ produces \[\Psi_0=\nu=(23^{10},21,17^3,13^5,13^2,11^7,7^5,5,3^{13},1).\]
Second, the weights to be added is $\{11^{12},12^6,13^5,14^4\}$; see Figure \ref{figpsi} for the computation of other $\Psi_i$.
\begin{figure}[ht]
\begin{center}
\begin{tikzpicture}
\draw (0,0)--(15.2,0); \draw (0,0.6)--(15.2,0.6); \draw (0,1.2)--(15.2,1.2); \draw (0,1.8)--(15.2,1.8);
\draw (0,2.4)--(15.2,2.4); \draw (0,3)--(15.2,3); \draw (0,3.6)--(15.2,3.6); \draw (0,4.2)--(15.2,4.2);

\node at (0.15,3.9) {$i$};\node at (0.15,3.3) {$0$};\node at (0.15,2.7) {$1$};\node at (0.15,2.1) {$2$};
\node at (0.15,1.5) {$3$};\node at (0.15,0.9) {$4$};\node at (0.15,0.3) {$5$};

\node at (4.7,0.3) {$(29^5,27^3,25^8,21^2,19^6,15^8,13,11^2,9^3,7^3,5^{10},3^4,1^{10})$};
\node at (4.7,0.9) {$(27^8,25^3,23^5,19^5,17^3,15^3,13^6,9^2,7^4,5^{9},3^5,1^{9})$};
\node at (4.7,1.5) {$(27^3,25^8,21^5,17^8,13^7,11^2,7^3,5^{10},3^4,1^{10})$};
\node at (4.7,2.1) {$(25^9,23^2,19^5,17^3,15^5,13^2,11^7,5^{11},3^3,1^{11})$};
\node at (4.7,2.7) {$(25^4,23^7,17^8,13^7,11^2,9^5,5^{6},3^8,1^{6})$};
\node at (4.7,3.3) {$(23^{10},21,17^3,15^5,13^2,11^7,7^5,5,3^{13},1)$};
\node at (4.7,3.9) {$\Psi_i$};

\node at (11.2,0.3) {$(29^5,27^3)(25^8)$};
\node at (11.2,0.9) {$(27^8,25^3)(23^5)$};
\node at (11.2,1.5) {$(21^5)$};
\node at (11.2,2.1) {$(19^5,17^3)(15^5,13^2)(11^7)$};
\node at (11.2,2.7) {$(13^7,11^2)(9^5)$};
\node at (11.2,3.3) {$(7^5,5)(3^{13},1)$};
\node at (11.2,3.9) {$\Omega_i$};

\node at (14.3,3.9) {$\mathbf{w}_i$};
\node at (14.3,3.3) {$\{11^5\}$};\node at (14.3,2.7) {$\{11^5\}$};\node at (14.3,2.1) {$\{11^2,12^3\}$};
\node at (14.3,1.5) {$\{12^3,13^2\}$};\node at (14.3,0.9) {$\{13^3,14^2\}$};\node at (14.3,0.3) {$\{14^2\}$};

\draw (0,0)--(0,4.2);\draw (0.3,0)--(0.3,4.2);\draw (9.1,0)--(9.1,4.2);\draw (13.4,0)--(13.4,4.2);
\draw (15.2,0)--(15.2,4.2);
\end{tikzpicture}\caption{The computation of $\Psi_i$.}\label{figpsi}
\end{center}
\end{figure}
Finally, we add the remaining $r_j=(12+6+5+4)-5\cdot5=2$ weights (i.e., two $14$'s) to $\Psi_5$ to get \[\lambda=(31^2,29^3,27^5,25^6,21^4,19^4,17,15^8,11^4,9,7^5,5^8,3^6,1^8).\]
We find that $\lambda_1=31$, which is indeed greater than or equal to $2j+2q+1$.
\end{example}

\begin{theorem}\label{thminvkey}
Preserve the notation in the proof of Theorem \ref{thmflhgeq}. The block pattern of $\Omega_i$ will not be affected by the parts less than its smallest part before we finish the operation $\eta$.
\end{theorem}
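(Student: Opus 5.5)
We prove the statement by induction on $i$, running in parallel with the construction of the sequences $(\Psi_i)$ and $(\Omega_i)$ in the proof of Theorem \ref{thmflhgeq}. During one application of $\eta$ we add $m_j$ weights. By Remark \ref{remmj} and the condition $m_j<\min\{m_1,\ldots,m_{j-1}\}$ of Theorem \ref{thmpm}, each weight is either equal to $b_o$ of the current partition or exceeds it by one. The block pattern of $\Omega_i$, which is read from the smallest part upward when performing $\lambda\stackrel{+w}{\longrightarrow}$, can only be disturbed from below. This happens when the chain just after $\Omega_i$ (the one containing smaller parts) grows upward until it touches the smallest part of $\Omega_i$. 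It also happens when the parity of the weight changes how the chain of $1$'s is grouped and the change propagates through a chain of odd length that abuts $\Omega_i$. So the plan is to control the chains lying below $\Omega_i$ during the $m_j$ additions, in the same spirit as Lemma \ref{lemsp} and Theorem \ref{thmnaff} control the chains above a given chain under $\tau$.

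The key step is an ``inverse'' analogue of $\delta$-fixedness. Define a chain below $\Omega_i$ to be \emph{$m_j$-stable} if, while fewer than $m_j$ weights are added, its largest element changes from $g$ to at most $g+2$, and $g+2$ then shares a block with $g$. This mirrors Lemma \ref{lemsp} with the roles of the smallest and largest elements exchanged. We first verify this for $i=0$ using Lemma \ref{thmomega}: every block of the chain of $1$'s behind $\omega(\Omega_0)$ has its maximal element occurring more than $m_j$ times. We also use Lemma \ref{thmich}, which bounds $\varepsilon$ of the chain containing $1$'s from below by $m_{j-1}>m_j$. Together these show that no block below can be emptied and re-formed within $m_j$ additions. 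Next we show that the property propagates, arguing as in Theorem \ref{thmdelta} and Corollary \ref{corch}. The chains below $\Omega_{i}$ in $\Psi_{i}$ come from chains below $\Omega_{i-1}$, together with the tail of $\Omega_{i-1}$ that splits off after $\chi(\Omega_{i-1})$ disappears. By the bulleted facts, the maximal elements of the blocks behind $f_{i-1}+2$ occur more than $m_j$ times, so these chains again satisfy the stability condition.

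With stability in hand, the conclusion follows by a case split like that in Theorem \ref{thmnaff}. Let $a$ be the smallest part of $\Omega_i$ and $b$ the largest part of the chain directly below it, so $a\ge b+4$. If the largest part of the lower chain stays $b$, it cannot join a block with $a$. If it becomes $b+2$, then by stability $b+2$ is paired with $b$ inside its own block, so $a$ does not acquire a new companion. The parity effect of the chain of $1$'s is absorbed inside the lowest chain, because Lemma \ref{thmomega} and Lemma \ref{thmich} force that chain to have the right parity at each step, exactly as in the parity argument in the proof of Theorem \ref{thmflhleq}.

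The main obstacle is establishing stability for the chain containing $1$'s across all $q$ iterations. Its parity depends on whether the added weights equal $b_o$ or $b_o+1$, and at the transitions $j+1\to j+2\to\cdots$ inside a single $\eta$ the grouping of the $1$'s switches. I would handle this by tracking the multiplicities of the weights $m_{j+1},m_{j+2},\ldots$ and using $m_j\le m_{j+1}$, together with the minimality of $r_j$ in the definition of the lecture hall position. These should show that each such switch happens only after more than $m_j$ additions have already lifted the relevant block maxima. Whether this bookkeeping closes cleanly is the part I am least sure of.
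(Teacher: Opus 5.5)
\paragraph{Verdict.} The proposal has a genuine gap. Your closing case split, the mirror image of Theorem \ref{thmnaff}, would work if your $m_j$-stability held, but the stability lemma is false, so better bookkeeping cannot complete the plan.

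\paragraph{A counterexample to stability.} Take $\mathbf{s}=(1^3,2^{30},3,4,5^6)$. Then $\pi=(41,38,8,7,6)$, and $\ell_h(\pi)=14$ is attained only at $j=1$, with $m_j=3\le m_2$ and $q=12$.
\begin{itemize}
\item Running $\eta$ gives $\Psi_{10}=(21^3)$.
\item The eleventh application adds the weights $3,4,5$ and gives $\Psi_{11}=(23^3,5,1)$, with $\Omega_{11}=(23^3)$.
\item Two further weights $5$ produce $(25,23^2,7,1^2)$ and then $(25^2,23,9,1^3)$.
\end{itemize}
So the chain $(5)$ directly below $\Omega_{11}$ climbs to $9$ after $2<m_j$ additions. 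Already after one addition, $7$ is not paired with $5$. The theorem survives here only because $9$ is still far below $23$.

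\paragraph{Why your supporting steps fail.} The example also shows that your propagation step misreads the bulleted facts in the proof of Theorem \ref{thmflhgeq}. Those facts concern the blocks of $\Omega_i$; that is how $\omega(\Omega_i)$ is located. They say nothing about the chains beneath $\Omega_i$. Those chains can consist of parts of multiplicity $1$, created by weight steps $h\to h+1$ with $m_h<m_j$ inside a single $\eta$. The other tools you cite do not reach this either:
\begin{itemize}
\item Lemmas \ref{thmich} and \ref{thmomega} describe only $\mu$ and $\nu=\Psi_0$.
\item The condition $m_j\le m_{j+1}$ says nothing about the multiplicities $m_l$ with $l\ge j+2$ that drive these steps.
\end{itemize}

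\paragraph{What is actually needed.} The true statement is weaker and quantitative: a lower chain may climb fast, but it never gets far enough to reach $\Omega_i$ before the $m_j$ additions end. Comparing that distance with the climbing rate is exactly where the global extremality of the lecture hall position is needed. This means $\lceil\pi_l/m_l\rceil+l-1\le\ell_h(\pi)$ for every $l$, together with the tie-break (b). Note that (b) makes the remainder at $j$ \emph{maximal} among ties, not minimal as you wrote.

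\paragraph{How the paper does it.} The paper argues by contradiction. Suppose the pattern of $\Omega_i$ is disturbed after $w<m_j$ additions, producing a merged chain $\widetilde\Omega_i$ in $\widetilde\Psi_i$, whose gap-free sequence is a prefix of $\mathbf{s}$.
\begin{enumerate}
\item Theorem \ref{thmmapkey} shows that all chains above $\widetilde\Omega_i$ are $m_j$-fixed. Otherwise, the last chain that is $d$-fixed but not $(d+1)$-fixed, with $d<m_j$, yields an index $r$ with $m_r=d$ and $\lceil\pi_r/m_r\rceil+r-1>\ell_h(\pi)$.
\item Applied to $\widetilde\Omega_i$ itself, Theorem \ref{thmmapkey} gives $\varepsilon(\widetilde\Omega_i)=m_l\le w<m_j$ with $l+c=j+i+2$.
\item This forces $\lceil\pi_l/m_l\rceil+l-1=\ell_h(\pi)$ with a strictly larger remainder at $l$ than at $j$, contradicting the choice of $j$.
\end{enumerate}
Since $m_l<m_j$, the offending index satisfies $l\ge j+2$, which is precisely the range your local argument cannot see. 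Your stability lemma has to be replaced by some argument of this kind, one that translates chain positions into the arithmetic of $\mathbf{s}$.
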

\begin{proof}
To see why, for sake of contradiction, we assume that the block pattern of $\Omega_i$ was affected when we add $w$ weights of $\mathbf{s}$ to $\Psi_{i}$ where $1\leq w<m_j$. The new chain is denoted $\widetilde{\Omega}_i$, and the new odd partition is denoted $\widetilde{\Psi}_i$. More precisely, $\widetilde{\Omega}_i$ is obtained from $\Omega_i$ by appending either a chain of odd length without $1$'s or the chain containing $1$'s. For the first case, $\widetilde{\Omega}_i$ is a chain of odd length without $1$'s; for the latter case, the $(w+1)$-st weight is different from the $w$-th weight. Note that $f_i+2$ must appear in $\widetilde{\Omega}_i$, and the gap-free sequence associated with $\widetilde{\Psi}_i$ is a subsequence of $\mathbf{s}$, saying $\widetilde{\mathbf{s}}=(1^{m_1},2^{m_2},\ldots,h^{m_h-x})$ where $j\leq h\leq k$ and $0\leq x<m_h$.

We first show that the chains before $\widetilde{\Omega}_i$ are all $m_j$-fixed.

It is routine to verify that there always exists an integer $d$ such that all chains before $\widetilde{\Omega}_i$ are $d$-fixed. We take the largest such $d$, which means that some chains under consideration are not $(d+1)$-fixed.

We claim that $d\geq m_j$.

Assume that $d<m_j$. Let $\Gamma$ be the last chain before $\widetilde{\Omega}_i$ which is not $(d+1)$-fixed. Suppose that there are $u$ and $v$ blocks before $g(\Gamma)$ (i.e., the largest part of $\Gamma$) and $f_i+2$ in $\widetilde{\Psi}_i$, respectively. Clearly, $\Gamma$ excludes $1$'s. We know from Theorem \ref{thmmapkey}
that $d\in\{m_1,m_2,\ldots,m_{h-1}\}$, saying $d=m_r$, and that
\begin{align*}
g(\Gamma)&=2r+2\left\lceil(m_{r+1}+m_{r+2}+\cdots+m_h-x)/m_r\right\rceil-4u-1.
\end{align*}
From the proof of Theorem \ref{thmflhgeq}, we know that
\begin{align*}
f_i+2&=2(j-2(u_i-v_i))+2i+1\\
&=2(j-2v)+2i+1.
\end{align*}

If $\Gamma$ has odd length, then $g(\Gamma)\geq (f_i+2)+4(v-u)$, so
\begin{align*}
r+\left\lceil(m_{r+1}+m_{r+2}+\cdots+m_h-x)/m_r\right\rceil\geq j+i+1.
\end{align*}
From the proof of Theorem \ref{thmmapkey}, we know that $(m_{r+1}+m_{r+2}+\cdots+m_h-x)$ is divided by $m_r$. We now conclude that
\begin{align*}
\left\lceil\frac{\pi_r}{m_r}\right\rceil+r-1
&=\left\lceil\frac{(m_r+m_{r+1}+\cdots+m_h-x)+x+m_{h+1}+\cdots+m_k}{m_r}\right\rceil+r-1\\
&=1+\left\lceil\frac{m_{r+1}+\cdots+m_h-x}{m_r}\right\rceil
+\left\lceil\frac{x+m_{h+1}+\cdots+m_k}{m_r}\right\rceil+r-1\\
&\geq j+i+1+\left\lceil\frac{x+m_{h+1}+\cdots+m_k}{m_r}\right\rceil\\
&>j+i+\left\lceil\frac{x+m_{h+1}+\cdots+m_k}{m_r}\right\rceil.
\end{align*}

If $\Gamma$ has even length, then $g(\Gamma)>(f_i+2)+4(v-u)$, so
\begin{align*}
r+\left\lceil(m_{r+1}+m_{r+2}+\cdots+m_h-x)/m_r\right\rceil>j+i+1.
\end{align*}
According to the proof of Theorem \ref{thmmapkey}, we know that $m_{r+1}+m_{r+2}+\cdots+m_h-x$ is not a multiple of $m_r$, which implies that
\begin{align*}
r+(m_{r+1}+m_{r+2}+\cdots+m_h-x)/m_r>j+i+1.
\end{align*}
We now can assume that $m_{r+1}+m_{r+2}+\cdots+m_h-x=(j+i+1-r)m_r+s$ where $s>0$. We conclude that
\begin{align*}
\left\lceil\frac{\pi_r}{m_r}\right\rceil+r-1
&=\left\lceil\frac{(m_r+m_{r+1}+\cdots+m_h-x)+x+m_{h+1}+\cdots+m_k}{m_r}\right\rceil+r-1\\
&=\left\lceil\frac{(m_r+m_{r+1}+\cdots+m_h-x-s)+s+x+m_{h+1}+\cdots+m_k}{m_r}\right\rceil+r-1\\
&=1+\left\lceil\frac{m_{r+1}+\cdots+m_h-x-s}{m_r}\right\rceil+\left\lceil\frac{s+x+m_{h+1}+\cdots+m_k}{m_r}\right\rceil+r-1\\
&=j+i+1+\left\lceil\frac{s+x+m_{h+1}+\cdots+m_k}{m_r}\right\rceil\\
&>j+i+\left\lceil\frac{s+x+m_{h+1}+\cdots+m_k}{m_r}\right\rceil\\
&\geq j+i+\left\lceil\frac{x+m_{h+1}+\cdots+m_k}{m_r}\right\rceil.
\end{align*}

Thus, we always have
\begin{align*}
\left\lceil\frac{\pi_r}{m_r}\right\rceil+r-1
&>j+i+\left\lceil\frac{x+m_{h+1}+\cdots+m_k}{m_r}\right\rceil\\
&\geq j+i+\left\lceil\frac{x+m_{h+1}+\cdots+m_k}{m_j}\right\rceil,
\end{align*}
by the assumption that $m_r=d<m_j$.

Meanwhile, we have
\begin{align*}
\pi_j&=m_j+im_j+x+m_{h+1}+\cdots+m_k,
\end{align*}
which implies that
\begin{align*}
\left\lceil\frac{\pi_j}{m_j}\right\rceil+j-1
&=j+i+\left\lceil\frac{x+m_{h+1}+\cdots+m_k}{m_j}\right\rceil.
\end{align*}
Since the weight $j$ determines the lecture hall position, we obtain
\begin{align*}
\left\lceil\frac{\pi_j}{m_j}\right\rceil+j-1\geq\left\lceil\frac{\pi_r}{m_r}\right\rceil+r-1,
\end{align*}
a contradiction.

Thus, the claim that $d\geq m_j$ is true. Hence, all chains before $\widetilde{\Omega}_i$ are $d$-fixed, which means that they are also $m_j$-fixed.

We next discuss the properties of the chain $\widetilde{\Omega}_i$.

Obviously, $e:=\varepsilon(\widetilde{\Omega}_i)\leq w$ since $f_i+2$ as the maximal element of the block containing it occurs exactly $w$ times in $\widetilde{\Omega}_i$. Because the chains before $\widetilde{\Omega}_i$ are all $m_j$-fixed, they are also $e$-fixed since $e\leq w<m_j$.

If $\widetilde{\Omega}_i$ is a chain of odd length without $1$'s, then $\widetilde{\Omega}_i$ is not $(e+1)$-fixed otherwise we get $\varepsilon(\widetilde{\Omega}_i)\geq e+1$. It follows from Theorem \ref{thmmapkey} that \[e\in\{m_1,m_2,\ldots,m_{h-1}\}.\]
If $\widetilde{\Omega}_i$ contains $1$'s, then Theorem \ref{thmmapkey} tells us that $e=m_h-x$. For this case, the $w$-th weight added is $h$. Since the $(w+1)$-st weight added is different from the $w$-th weight added, we conclude that $x=0$ and $h<k$. Thus, $e=m_h$. We always have $e\in\{m_1,m_2,\ldots,m_{h}\}$, saying $e=m_l$. It follows from the proof of Theorem \ref{thmmapkey} that $m_l+m_{l+1}+\cdots+m_h-x$ is divided by $m_l$ (note that $x=0$ if $m_l=m_h$), assuming that \[m_l+m_{l+1}+\cdots+m_h-x=cm_l.\]

Obviously, $f_i+2$ is the maximal element of the block containing it in $\widetilde{\Omega}_i$. Assume that there are $b$ blocks before $f_i+2$ in $\widetilde{\Omega}_i$. Thus, $g(\widetilde{\Omega}_i)=(f_i+2)+4b$. Alternatively, it follows from Theorem \ref{thmmapkey} that
\begin{align*}
g(\widetilde{\Omega}_i)&=2l+2(m_{l+1}+m_{l+2}+\cdots+m_h-x)/m_{l}-4(u_i-v_i-b)-1\\
&=2(l-2(u_{i}-v_{i}-b))+2c-3.
\end{align*}
Thus, we obtain
\begin{align*}
f_i+2=2(l-2(u_{i}-v_{i}))+2c-3.
\end{align*}
In addition, we know that $f_i=2(j-2(u_i-v_i))+2i-1$ from the proof of Theorem \ref{thmflhgeq}. So,
\begin{align}\label{eql+c}
l+c=j+i+2.
\end{align}

For convenience, we write
\begin{align*}
x+m_{h+1}+\cdots+m_k&=\alpha_lm_l+\beta_l,\\
w+x+m_{h+1}+\cdots+m_k&=\alpha_jm_j+\beta_j,
\end{align*}
where $1\leq\beta_l\leq m_l$ and $1\leq\beta_j\leq m_j$. We now have
\begin{align*}
\pi_l&=m_l+m_{l+1}+\cdots+m_h+m_{h+1}+\cdots+m_k\\
&=(cm_l+x)+m_{h+1}+\cdots+m_k\\
&\equiv \beta_l \pmod{m_l}
\end{align*}
and
\begin{align*}
\pi_j&=m_j+im_j+w+x+m_{h+1}+\cdots+m_k\\
&=(i+1)m_j+w+x+m_{h+1}+\cdots+m_k\\
&\equiv \beta_j \pmod{m_j}.
\end{align*}
Furthermore, we obtain
\begin{align*}
\left\lceil\frac{\pi_l}{m_l}\right\rceil+l-1&=c+\left\lceil\frac{x+m_{h+1}+\cdots+m_k}{m_l}\right\rceil+l-1\\
&\geq c+l-1+\left\lceil\frac{x+m_{h+1}+\cdots+m_k}{m_j}\right\rceil
\end{align*}
because $m_l=e\leq w<m_j$, and
\begin{align*}
\left\lceil\frac{\pi_j}{m_j}\right\rceil+j-1&=i+j+\left\lceil\frac{w+x+m_{h+1}+\cdots+m_k}{m_j}\right\rceil\\
&=c+l-2+\left\lceil\frac{w+x+m_{h+1}+\cdots+m_k}{m_j}\right\rceil\tag{by \eqref{eql+c}}\\
&\leq c+l-1+\left\lceil\frac{x+m_{h+1}+\cdots+m_k}{m_j}\right\rceil
\end{align*}
because $w<m_j$.

We now get
\begin{align*}
\left\lceil\frac{\pi_l}{m_l}\right\rceil+l-1\geq\left\lceil\frac{\pi_j}{m_j}\right\rceil+j-1.
\end{align*}
Since $\ell_h(\pi)=\lceil{\pi_j}/{m_j}\rceil+j-1$, we have
\[\left\lceil\frac{\pi_l}{m_l}\right\rceil+l-1=\left\lceil\frac{\pi_j}{m_j}\right\rceil+j-1,\]
which forces us to obtain
\begin{align*}
\left\lceil\frac{x+m_{h+1}+\cdots+m_k}{m_l}\right\rceil+1&=\left\lceil\frac{w+x+m_{h+1}+\cdots+m_k}{m_j}\right\rceil,
\end{align*}
or equivalently, $(\alpha_l+1)+1=\alpha_j+1$. So we have $\alpha_l=\alpha_j-1$.

On the other hand, we have
\begin{align*}
\alpha_lm_l+\beta_l&=x+m_{h+1}+\cdots+m_k\\
&=\alpha_jm_j+\beta_j-w\\
&>\alpha_jm_j+\beta_j-m_j\\
&=(\alpha_j-1)m_j+\beta_j\\
&=\alpha_lm_j+\beta_j\\
&>\alpha_lm_l+\beta_j.
\end{align*}
So we conclude that $\beta_l>\beta_j$, contradicting that $\pi_j$ determines the lecture hall position.
\end{proof}

Combining Theorems \ref{thmflhleq} and  \ref{thmflhgeq} together, we get the following result.
\begin{theorem}
Given an odd partition $\lambda$, if $\pi=\varphi(\lambda)$, then $(g(\lambda)+1)/2=\ell_h(\pi)$.
\end{theorem}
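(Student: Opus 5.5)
The statement is an equality, so the plan is to prove two inequalities and combine them; both are already available in the paper. Let $\lambda\in\mathcal{O}$ and $\pi=\varphi(\lambda)$, and write $g(\lambda)=\lambda_1=2t-1$.

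\emph{Upper bound on $t$.} Theorem~\ref{thmflhleq} gives $t=(\lambda_1+1)/2\leq\ell_h(\pi)$. Recall how that proof goes. It starts from $\tau^r(\lambda)$, where $r$ is the multiplicity of the largest part. It then follows the chains $\Upsilon_0,\Upsilon_1,\ldots,\Upsilon_p$, each with multiplicity $\delta$, until one of them contains $1$'s. Along the way it identifies a weight $j$ with $m_j=\delta$ and $\pi_j=(p+1)\delta+r$. Evaluating the maximum in Theorem~\ref{thmellh} at the index $j$ then gives $\ell_h(\pi)\geq j+p+1=t$.

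\emph{Lower bound on $t$.} Theorem~\ref{thmflhgeq} gives the reverse inequality $t\geq\ell_h(\pi)$. Here one fixes the weight $j$ that determines the lecture hall position, and Theorem~\ref{thmellh} gives
\[\ell_h(\pi)=\left\lceil\frac{\pi_j}{m_j}\right\rceil+j-1=j+q+1.\]
Starting from $\nu$, the proof applies the operation $\eta$ exactly $q$ times, tracking the chains $\Omega_i$. It then adds the remaining $r_j>0$ weights and obtains $\lambda_1\geq 2j+2q+1$. The step I expect to be the main obstacle is the non-interference claim that this argument relies on. That claim is Theorem~\ref{thminvkey}, which is stated after the proof of Theorem~\ref{thmflhgeq} but proved independently. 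Its proof uses only Theorem~\ref{thmmapkey} and the minimality conditions (a)--(c) in the definition of the lecture hall position. So there is no circularity, and Theorem~\ref{thmflhgeq} can be cited as established.

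\emph{Conclusion.} Combining the two bounds gives $(g(\lambda)+1)/2=t=\ell_h(\pi)$. Together with the statistics matched in Section~\ref{secsta}, namely $\ell,o_{1,4},o_{3,4},b_o$, and the fact that $\varphi$ is a bijection, this completes the proof of Theorem~\ref{thmmain}.
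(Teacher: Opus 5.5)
Your proposal is correct and follows the paper's argument exactly: the equality is obtained by combining the inequality of Theorem~\ref{thmflhleq} with the reverse inequality of Theorem~\ref{thmflhgeq}. One correction to your circularity remark is that Theorem~\ref{thminvkey} is not proved fully independently, because its proof uses the formula $f_i=2(j-2(u_i-v_i))+2i-1$ established inside the proof of Theorem~\ref{thmflhgeq}; the two are therefore a joint induction on $i$, which is still non-circular.
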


\section{Concluding remarks}\label{seccr}

\subsection{Block index}
The block index for ordinary partitions (no restriction on parts) was defined in \cite{LWXi23}, which is very analogous to the block odd index for odd partitions. To understand this notion, we only need to replace ``odd integer"" by ``integer'' in the definition of block odd index. We use $b(\lambda)$ to denote the block index of $\lambda$. For instance, if
\[\lambda=(16^4,9^2,8^3,7,5^6,4^3,3,2,1^5),\]
then it has $6$ blocks, namely $(16^4),(9^2,8^3),(7),(5^6,4^3),(3,2)$ and $(1^5)$. Thus,
\[b(\lambda)=2+2+2+2+2+1=11.\]
Letting $\pi=(14^3,13^2,12^5,7^4,6,4^2,2^5,1^6)$, then the blocks of $\pi$ are $(14^3,13^2)$, $(12^5)$, $(7^4,6)$, $(4^2)$ and $(2^5,1^6)$. So $b(\pi)=2+2+2+2+2=10$.

Wang and Zhou \cite{WZhZ24} found an interesting application of the block index to the newly popularized Schmidt's partition theorem. A distinct partition $(\pi_1,\pi_2,\pi_3,\pi_4,\ldots)$ with $\pi_1+\pi_3+\pi_5+\cdots=n$ is called a Schmidt partition of $n$. For simplicity, we use $|\pi|_o$ to denote the sum $\pi_1+\pi_3+\pi_5+\cdots$. For an ordinary partition $\lambda$, let $m_{o,o}(\lambda)$ and $m_{o,e}(\lambda)$ be the number of distinct odd and even parts of $\lambda$ occurring an odd number of times, respectively.
The main result of \cite{WZhZ24} reads as follows.
\begin{theorem}\label{thmsch}
The number of ordinary partitions of $n$ into $m$ parts with $i$ odd parts occurring an odd number of times, $j$ even parts occurring an odd number of times, and block index $k$ equals the number of Schmidt partitions of $n$ into $k$ parts with alternating sum $m$, $i$ odd indexed odd parts and $j$ even indexed odd parts. Equivalently,
\begin{align*}
\sum_{\lambda\in\mathcal{P}}x^{\ell(\lambda)}y^{m_{o,o}(\lambda)}z^{{m_{o,e}(\lambda)}}w^{b(\lambda)}q^{|\lambda|}
&=\sum_{\pi\in\mathcal{S}}x^{\ell_a(\pi)}y^{n_{o,o}(\pi)}z^{n_{o,e}(\pi)}w^{\ell(\pi)}q^{|\pi|_o},
\end{align*}
where $\mathcal{P}$ and $\mathcal{S}$ denotes the set of ordinary and Schmidt partitions, respectively.
\end{theorem}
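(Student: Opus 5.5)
The plan is to mimic the construction of $\varphi$, replacing odd partitions by ordinary partitions and the odd-integer step $2$ by the integer step $1$. Then the statement reduces to the machinery of Section \ref{secsta}, together with one new observation about $|\pi|_o$.

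Given $\lambda\in\mathcal{P}$, group its parts into blocks (from largest to smallest) of at most two consecutive integers. Define $\tau$ on each block: a block $(t^a,(t-1)^b)$ of weight $2$ with $t\ge 2$ becomes $(t^{a-1},(t-1)^{b+1})$, and the block $(1^a)$ becomes $(1^{a-1})$. Each block thus loses $1$, so $|\tau(\lambda)|=|\lambda|-c(\lambda)$, where $c(\lambda)$ is the number of blocks. The block index satisfies $b(\lambda)=2c(\lambda)$ or $2c(\lambda)-1$, according to whether a lone block of $1$'s is present.

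Iterating $\tau$ records the sequence $\mathbf{s}=(b(\lambda),b(\tau(\lambda)),\ldots)$. As in Section \ref{secmap}, it is gap-free with consecutive differences in $\{0,1\}$. The key claim is the identity
\[\sum_i \left\lceil s_i/2\right\rceil=|\lambda|.\]
Conjugating, let $\pi$ be the distinct partition with $\pi'=\mathbf{s}$. Then the entries $s_i\ge 2r-1$ contribute to $\pi_{2r-1}$, so
\[\pi_1+\pi_3+\cdots=\sum_i\lceil s_i/2\rceil=|\lambda|,\]
and hence $|\pi|_o=|\lambda|$. This is exactly why Schmidt partitions appear.

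Invertibility is handled by the analogue of the $\stackrel{+w}{\longrightarrow}$ operation. Group from the smallest part upward, with the parity of $w$ deciding whether the $1$'s form a lone block, and promote one element in each block.

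For the statistics, the arguments of Section \ref{secsta} transfer verbatim with the following substitutions:
\begin{itemize}
\item $\ell(\lambda)$ drops by $1$ exactly when $b$ is odd (a $1$ is removed), giving $\ell(\lambda)=\ell_a(\pi)$.
\item $b(\lambda)=\ell(\pi)$ is immediate from the picture.
\item The classes ``$\equiv1$ and $\equiv3 \pmod 4$ among odd integers'' become ``odd and even integers'', since consecutive blocks now shift by $2$ and alternate parity in the same way.
\end{itemize}
We then run the induction (\ref{eqinduc}) with $m_{o,o},m_{o,e}$ and their even-multiplicity analogues against $n_{o,o},n_{o,e},n_{e,o},n_{e,e}$. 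The chain comparison reuses the fact that the first and last added elements of an odd-length chain have the same residue, here parity.

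The main obstacle is the weight identity $\sum\lceil s_i/2\rceil=|\lambda|$. It is delicate because $b(\tau^i(\lambda))$ may drop by one when the tail block changes (for example $(2,1^a)\to(1^{a+1})$), and one must check that the decrease $c(\lambda)$ always equals $\lceil b(\lambda)/2\rceil$. I would verify this block by block using $b=2c$ or $2c-1$, and then adapt the parity case analysis of Section \ref{secsta} to handle the $1$'s block exactly as was done there.
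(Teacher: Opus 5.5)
Your proposal is correct and is essentially the paper's argument: your map is precisely $\varphi\circ\rho$ written in ordinary-partition language, since $\lambda_i\mapsto\bar\lambda_i=2\lambda_i-1$ carries your blocks, your $\tau$ and $b(\lambda)$ to the odd-partition blocks, $\tau$ and $b_o(\bar\lambda)$ (odd/even parts going to parts $\equiv1$/$\equiv3\pmod 4$), and the paper therefore invokes the statistics already proved for $\varphi$ rather than rerunning the induction, getting $|\lambda|=(|\bar\lambda|+\ell(\bar\lambda))/2=(|\pi|+\ell_a(\pi))/2=|\pi|_o$. Your flagged ``main obstacle'' is in fact immediate: $c(\lambda)=\lceil b(\lambda)/2\rceil$ holds for each partition separately because $b=2c$ or $2c-1$, regardless of how $b$ changes under $\tau$, and $\sum_i\lceil s_i/2\rceil$ is the same quantity as $(|\pi|+\ell_a(\pi))/2$.
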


We are ready to unveil a relation between Theorem \ref{thmmain} and Theorem \ref{thmsch}. For an ordinary partition $\lambda=(\lambda_1,\lambda_2,\ldots,\lambda_l)$, we can easily transform it to an odd partition, denoted $\bar{\lambda}=(\bar{\lambda}_1,\bar{\lambda}_2,\ldots,\bar{\lambda}_l)$, where $\bar{\lambda}_i=2\lambda_i-1$ for $1\leq i\leq l$. Clearly, we have
\begin{align*}
\ell(\lambda)=\ell(\bar{\lambda}),g(\lambda)=(g(\bar{\lambda})+1)/2,m_{o,o}(\lambda)=o_{1,4}(\bar{\lambda}),
m_{o,e}(\lambda)=o_{3,4}(\bar{\lambda}).
\end{align*}
It is not hard to verify that $b(\lambda)=b_o(\bar{\lambda})$. Assuming that $\pi=\varphi(\bar\lambda)$, we see that
\begin{align*}
|\lambda|=\frac{|\bar{\lambda}|+\ell(\bar{\lambda})}{2}
=\frac{|\pi|+\ell_a(\pi)}{2}=\pi_1+\pi_3+\pi_5+\cdots=|\pi|_o.
\end{align*}
Therefore, with the help of our key bijection $\varphi$, we can not only give a combinatorial proof of Theorem \ref{thmsch} but also strengthen it to
\begin{align*}
\sum_{\lambda\in\mathcal{P}}x^{\ell(\lambda)}y_1^{m_{o,o}(\lambda)}y_2^{{m_{o,e}(\lambda)}}z^{b(\lambda)}
w^{g(\lambda)}q^{|\lambda|}
=\sum_{\pi\in\mathcal{S}}x^{\ell_a(\pi)}y_1^{n_{o,o}(\pi)}y_2^{n_{o,e}(\pi)}z^{\ell(\pi)}w^{\ell_{h}(\pi)}q^{|\pi|_o}.
\end{align*}

Our bijection $\varphi$ can also be applied to prove some other results related to block index or block odd index. We need another two simple bijections, one of which was used to prove Theorem \ref{thmsch} and denoted $\rho$ from now on. Namely, $\rho$ maps an ordinary partition to an odd partition. The other one is denoted $\varpi$, which maps a gap-free partition to an odd partition, and defined below. Assume that $\lambda$ is a gap-free partition with largest part $\lambda_1$ and $\lfloor\lambda_1/2\rfloor=r$. We choose one copy of $2i-1$ for $1\leq i\leq r$ and all even parts to construct the first $r$ parts of $\pi:=\varpi(\lambda)$ as follows: for each $1\leq j\leq r$, \[\pi_j=2(f_j+f_{j+1}+\cdots+f_{r})+(2j-1),\]
where $f_j$ denotes the number of repetitions of $2j$ in $\lambda$. The parts now left in $\lambda$ are all odd, which are regarded as the remaining parts of $\pi$. We now can combine $\varphi$, $\rho$ and $\varpi$ to prove most of the results in \cite{LWXi23} and \cite{LWX23}. We do not plan to state each theorem because of involving more notions and notation, and do not plan to give the details, just providing hints.
\begin{itemize}
\item Theorem 4.3 in \cite{LWX23}: $\varphi$ and some elementary combinatorial analysis;
\item Theorem 5.3 in \cite{LWX23}: $\varphi$, conjugation transformation and $\varpi$;
\item Theorem 1.8 in \cite{LWXi23}: $\rho$, $\varphi$, conjugation transformation, $\varpi$ and $\rho^{-1}$.
\end{itemize}

The block index is a generalization of the newer statistic ``minimal excludant'', defined to be the smallest missing integer in a partition. Similarly, we define the minimal odd excludant to be the smallest missing odd integer in a partition. In particular, the block odd index is a generalization of the minimal odd excludant for an odd partition. Thus, the results in a series of papers \cite{KLW20,WX22,WX23,WZ23,WZhZ23} can be naturally proved, where our bijection $\varphi$ serves as a main ingredient.

\subsection{$k$-block index}

For an integer $k\geq 2$, a partition $\lambda$ is called $k$-regular if it contains no multiples of $k$, and is called $k$-strict if each of its parts appears less than $k$ times. We use $\mathcal{R}_k$ and $\mathcal{S}_k$ to denote the set of $k$-regular and $k$-strict partitions, respectively.

Given a $k$-regular partition $\lambda$, we group its parts into blocks as follows. Starting with the
largest part of $\lambda$, saying $\lambda_1$, put all parts that are greater than or equal to $\lambda_1-k$ into a block. Then we remove the block from $\lambda$ and repeat this process on the remaining parts until we end up with an empty partition. Note that each block contains at most $k$ distinct parts. The weight of a block is defined to be $k$ if its largest element is greater than $k$; otherwise, to be its largest element. We define the $k$-block index of $\lambda$, denoted $b_k(\lambda)$, to be the sum of the weight of its blocks. For example, let $k=4$ and
\[\lambda=(26^3,25^8,22^4,21^5,15^2,14,13^6,11^2,10^7,9^4,7,3^{11},1^5).\]
We group the parts of $\lambda$ into blocks in the following manner
\[\lambda=(26^3,25^8,22^4)(21^5)(15^2,14,13^6,11^2)(10^7,9^4,7)(3^{11},1^5),\]
from which we get $b_4(\lambda)=4+4+4+4+3=19$.

It is easy to see that $k$-block index is a generalization of block odd index.

Given a partition $\pi=(\pi_1,\pi_2,\ldots,\pi_l)\in\mathcal{S}_k$, define
\[\ell_k(\pi)=(\pi_1-\pi_{k})+(\pi_{k+1}-\pi_{2k})+(\pi_{2k+1}-\pi_{3k})+\cdots.\]
Amending our bijection $\varphi$ suitably, we can combinatorially prove the following result
\begin{align*}
\sum_{\lambda\in\mathcal{R}_k}x^{\ell(\lambda)}y^{b_k(\lambda)}q^{|\lambda|}
=\sum_{\pi\in\mathcal{S}_k}x^{\ell_k(\pi)}y^{\ell(\pi)}q^{|\pi|},
\end{align*}
whose proof is left to the reader.

\subsection{Lecture hall position}

Let $\pi=(\pi_1,\pi_2,\ldots,\pi_l)$ be a distinct partition in which $\pi_j$ determines the lecture hall position. Write $r_\pi\equiv\pi_j\pmod {m_j}$ where $m_j=\pi_j-\pi_{j+1}$ and $r_\pi>0$.

There is a substantial amount of numerical evidence to conjecture the following result.
\begin{conjecture}\label{conj}
Suppose that $\lambda$ is an odd partition where the largest part occurs exactly $r$ times. If $\pi=\varphi(\lambda)$, then $r=r_\pi$.
\end{conjecture}

See Figure \ref{figevid} for some evidence of this conjecture.
\begin{figure}[ht]
\begin{center}
\begin{tikzpicture}
\draw (0,0)--(14.4,0); \draw (0,0.6)--(14.4,0.6); \draw (0,1.2)--(14.4,1.2); \draw (0,1.8)--(14.4,1.8);
\draw (0,2.4)--(14.4,2.4); \draw (0,3)--(14.4,3); \draw (0,3.6)--(14.4,3.6); \draw (0,4.2)--(14.4,4.2);
\draw (0,4.8)--(14.4,4.8); \draw (0,5.4)--(14.4,5.4);

\draw (0,0)--(0,5.4);
\node at (2.1,5.1) {$\lambda\in\mathcal{O}$};
\node at (2.1,4.5) {$(9,7^2,5^7,3^2,1^{8})$};
\node at (2.1,3.9) {$(11^2,9,7^2,5^7,3,1^{10})$};
\node at (2.1,3.3) {$(17^3,15^4,13^5,11^3,9^2,7)$};
\node at (2.1,2.7) {$(17^4,15^3,13^6,11^2,9^3)$};
\node at (2.1,2.1) {$(17^5,15^4,11^4,9^2,5^3)$};
\node at (2.1,1.5) {$(19^6,17^2,11^6,3,1)$};
\node at (2.1,0.9) {$(19^7,17^2,9^6,5,3,1)$};
\node at (2.1,0.3) {$(21^8,17^9,7^6,5^3,1)$};

\draw (4.2,0)--(4.2,5.4);
\node at (6.3,5.1) {$\pi=\varphi(\lambda)$};
\node at (6.3,4.5) {$(32,22,13,\underline{4},1)$};
\node at (6.3,3.9) {$(36,26,17,\underline{8},5,1)$};
\node at (6.3,3.3) {$(73,64,\underline{39},33,14,11)$};
\node at (6.3,2.7) {$(74,65,\underline{40},34,15,12)$};
\node at (6.3,2.1) {$(\underline{77},68,34,28,9,6)$};
\node at (6.3,1.5) {$(\underline{78},70,36,30,3,1)$};
\node at (6.3,0.9) {$(\underline{88},79,30,24,5,3,1)$};
\node at (6.3,0.3) {$(97,88,\underline{72},64,33,24,1)$};

\draw (8.4,0)--(8.4,5.4);
\node at (9.6,5.1) {$m_j$};
\node at (9.6,4.5) {$4-1=3$};
\node at (9.6,3.9) {$8-5=3$};
\node at (9.6,3.3) {$39-33=6$};
\node at (9.6,2.7) {$40-34=6$};
\node at (9.6,2.1) {$77-68=9$};
\node at (9.6,1.5) {$78-70=8$};
\node at (9.6,0.9) {$88-79=9$};
\node at (9.6,0.3) {$72-64=8$};

\draw (10.8,0)--(10.8,5.4);
\node at (12.6,5.1) {$r_\pi\equiv\pi_j \pmod {m_j}$};
\node at (12.6,4.5) {$1\equiv4\pmod3$};
\node at (12.6,3.9) {$2\equiv8\pmod3$};
\node at (12.6,3.3) {$3\equiv39\pmod6$};
\node at (12.6,2.7) {$4\equiv40\pmod6$};
\node at (12.6,2.1) {$5\equiv77\pmod9$};
\node at (12.6,1.5) {$6\equiv78\pmod8$};
\node at (12.6,0.9) {$7\equiv88\pmod9$};
\node at (12.6,0.3) {$8\equiv72\pmod8$};

\draw (14.4,0)--(14.4,5.4);

\end{tikzpicture}\caption{Some evidence where the underline indicates the lecture hall position.}\label{figevid}
\end{center}
\end{figure}

If Conjecture \ref{conj} is true, then Theorem \ref{thmmain} can be strengthened further. We think the techniques used in Section \ref{secub} and Section \ref{seclb} can help us confirm Conjecture \ref{conj}, but a more careful analysis is needed. At present, we are unable to achieve this final step.

\end{document}